\documentclass[11pt]{amsart}
\usepackage{amsmath,amsthm, amscd, amssymb, amsfonts, mathtools}
\usepackage[all]{xy}
\usepackage{mathrsfs}
\usepackage[inline]{enumitem}
\usepackage{multicol}
\usepackage[T2A,T1]{fontenc}
\usepackage{rotating}
\usepackage{hyperref}

\newcommand{\polring}{R}
\newcommand{\nucleo}{\mathbf R}
\newcommand{\supbos}{\, \natural \,}
\newcommand{\braibos}{\sharp}
\newcommand{\hopfuno}{\mathtt{H}}
\newcommand{\hopfdos}{\mathtt{K}}
\newcommand{\hopfdoble}{\mathtt{D}}
\newcommand{\hopfq}{/ \hspace{-3pt} /}

\newcommand{\spl}{\mathfrak{sl}}

\newcommand{\verma}{M}

\allowdisplaybreaks

\usepackage[ansinew]{inputenc}
\usepackage{graphicx,fancyhdr}

\usepackage[dvips, dvipsnames, usenames]{color}



\numberwithin{equation}{section}
\newtheorem{theorem}{Theorem}[section]
\newtheorem{lemma}[theorem]{Lemma}

\newtheorem{prop}[theorem]{Proposition}

\theoremstyle{definition}

\newtheorem{example}[theorem]{Example}

\theoremstyle{remark}
\newtheorem{remark}[theorem]{Remark}

\newcommand{\pf}{\begin{proof}}
\newcommand{\epf}{\end{proof}}




\newcommand{\ku}{ \Bbbk}
\newcommand{\fp}{\mathbb F_p}

\newcommand{\kut}{ \ku^{\times}}

\newcommand{\G}{\mathbb G}

\newcommand{\I}{\mathbb I}

\newcommand{\N}{\mathbb N}

\newcommand{\Z}{\mathbb Z}

\newcommand{\D}{\mathcal{D}}

\newcommand{\cA}{\mathcal{A}}

\newcommand{\cB}{\mathcal{B}}
\newcommand{\cO}{\mathcal{O}}
\newcommand{\cW}{\mathcal{W}}

\newcommand{\cD}{\mathcal{D}}

\newcommand{\cH}{\mathcal{H}}
\newcommand{\cK}{\mathcal{K}}

\newcommand{\cL}{\mathcal{L}}
\newcommand{\Pc}{{\mathcal P}}
\newcommand{\cR}{\mathcal{R}}
\newcommand{\Ss}{{\mathcal S}}

\newcommand{\cV}{\mathcal{V}}
\newcommand{\cX}{\mathcal{X}}
\newcommand{\cZ}{\mathcal{Z}}

\newcommand{\g}{\mathfrak g}

\newcommand{\ugo}{\mathfrak u}


\newcommand{\ad}{\operatorname{ad}}

\newcommand{\Aut}{\operatorname{Aut}}
\newcommand{\AuH}{\Aut_{\text{Hopf}}}
\newcommand{\AubH}[1]{{\Aut_{\text{Hopf}}}_{#1}^{#1}}

\newcommand{\car}{\operatorname{char}}

\newcommand{\id}{\operatorname{id}}
\newcommand{\gr}{\operatorname{gr}}
\newcommand{\GK}{\operatorname{GKdim}}
\newcommand{\Hom}{\operatorname{Hom}}

\newcommand{\Svec}{\operatorname{\textsf{sVec}}}

\newcommand{\Irr}{\operatorname{Irrep}}
\newcommand{\Ind}{\operatorname{Ind}}

\newcommand{\gpuno}{\mathtt{C}}
\newcommand{\gpdos}{\mathtt{G}} 
\newcommand{\gptres}{\mathtt{N}} 
\newcommand{\morgpo}{\mathtt{F}}

\newcommand{\toba}{\mathscr{B}}
\newcommand{\scL}{\mathscr{L}}
\newcommand{\scK}{\mathscr{K}}

\newcommand{\ot}{\otimes}

\newcommand{\sydA}{{}^{\mathcal{A} }_{\mathcal{A} }\mathcal{YDS}}

\newcommand{\customyd}[1]{{}^{ #1 }_{ #1 }\mathcal{YD}}
\newcommand{\yd}[1]{{}^{ #1 }_{ #1 }\mathcal{YD}}
\newcommand{\customyds}[1]{{}^{ #1 }_{ #1 }\mathcal{YDS}}
\newcommand{\genpi}{\epsilon}
\newcommand{\gen}{\mathbf{g}}
\newcommand{\gentilde}{\widetilde{\mathbf{g}}}
\newcommand{\gentwop}{\gamma}

\newcommand{\Gb}{\mathbf G}
\newcommand{\Hb}{\mathbf H}
\newcommand{\Bb}{\mathbf B}

\newcommand{\xb}{x}
\newcommand{\ub}{u}

\DeclareRobustCommand{\stirling}{\genfrac []{0pt}{}}

\newcommand{\xrightarrowdbl}[2][]{%
\xrightarrow[#1]{#2}\mathrel{\mkern-14mu}\rightarrow
}

\newcounter{tabla}\stepcounter{tabla}

\begin{document}

\title[On the double of the super Jordan plane]
{On the double of the (restricted) super Jordan plane}

\author[Nicol\'as Andruskiewitsch and H\'ector Pe\~na Pollastri]
{Nicol\'as Andruskiewitsch and H\'ector Pe\~na Pollastri}

\thanks{This material is based upon work supported by the National Science Foundation under
Grant No. DMS-1440140 while N. A. was in residence at the Mathematical Sciences
Research Institute in Berkeley, California, in the Spring 2020 semester. 
The work of N. A. and H. P. P. was partially supported by CONICET and Secyt (UNC)}

\address{ Facultad de Matem\'atica, Astronom\'ia y F\'isica,
Universidad Nacional de C\'ordoba. CIEM -- CONICET. 
Medina Allende s/n (5000) Ciudad Universitaria, C\'ordoba, Argentina}
\email{andrus|hpollastri@famaf.unc.edu.ar}

\begin{abstract}
We consider the super Jordan plane, a braided Hopf algebra introduced--to the best of our knowledge--in 
\cite{aah-triang}, and its restricted version in odd characteristic introduced in \cite{aah-oddchar}. 
We show that 
their Drinfeld doubles give rise naturally to Hopf superalgebras
justifying a posteriori the adjective \emph{super} given in \emph{loc. cit}.
These Hopf superalgebras are extensions of super commutative ones by the enveloping, respectively
restricted enveloping, algebra of $\mathfrak{osp}(1|2)$.
\end{abstract} 

\maketitle

\setcounter{tocdepth}{1}
\tableofcontents

\section*{Introduction}

\subsubsection*{The context}
Let $\ku$ be an algebraically closed field. The super Jordan plane is the 
graded algebra $\cB$ 
presented by generators $x_1, x_2$ with defining relations
\begin{align}\label{eq:def-super-jordan}
&x_1^2, & &x_2 x_{21} - x_{21} x_2 - x_1 x_{21},
\end{align}
where $x_{21} = x_2 x_1 + x_1 x_2$. It is known that $\cB$ has Gelfand-Kirillov dimension 2.
The braided vector space $(V,c)$ with basis $\{x_1, x_2\}$ and braiding
\begin{align}\label{eq:braidingsuperjordan}
c(x_i\ot x_1) &= - x_1\ot x_i, & c(x_i\ot x_2) &= (- x_2 +x_1)\ot x_i,& i &=1,2,
\end{align}
determines a structure of braided Hopf algebra on $\cB$ in the sense of \cite{Tak}.

The Jordan plane and the super Jordan plane play a central role 
in the study of pointed Hopf algebras over abelian groups with finite
Gelfand-Kirillov dimension in  \cite{aah-triang}, assuming $\car \ku = 0$.
 
\medbreak
Let $p > 2$ be a prime.
Assume  that $\car \ku = p$.
Motivated by \cite{clw} that deals with the restricted Jordan plane,
it is shown in \cite{aah-oddchar} that many analogues of the Nichols algebras from \cite{aah-triang} have  
finite dimension (see \cite{ABDF} for examples in characteristic 2).
Among them the \emph{restricted} super Jordan plane i.e. the algebra
presented by $x_1, x_2$ with relations \eqref{eq:def-super-jordan} and
\begin{align}\label{eq:def-super-jordan-restricted}
& x_{21}^p, && x_2^{2p}.
\end{align}
We began the study of the Drinfeld doubles of (suitable bosonizations of) 
the Jordan plane and its restricted analogue in \cite{ap}
showing among other results that they fit into exact 
sequences relating the enveloping algebra of $\spl_2(\ku)$, the algebras of functions on some algebraic groups
and their restricted analogues. 
In the present paper we carry out a similar analysis of the super Jordan plane and its restricted version.
We start in Section \ref{sec:recollections} with  brief discussions on various topics needed later:
Yetter-Drinfeld supermodules, restricted Lie superalgebras and Nichols algebras.
In Section \ref{sec:preliminaries} we first record presentations of duals and doubles of finite-dimensional Hopf 
algebras arising as bosonizations; this is essentially straightforward but useful for further computations.
Then we present the objects of our interest: the super Jordan plane, its restricted version and their duals.
Finally we deal with different descriptions as bosonization of the same Hopf algebra. This allows to define 
alternatively the Hopf superalgebras $\widetilde{\cD}$ and $\cD$ discussed below.

\subsubsection*{The double of the super Jordan plane}
Here we just need that $\car \ku \neq 2$. As in \cite{aah-triang}
we realize the braided vector space $(V,c)$ with $c$ given by \eqref{eq:braidingsuperjordan} 
as Yetter-Drinfeld module over the group algebra $\ku \Z$, hence 
we have the Hopf algebra $\hopfuno \coloneqq \cB\#\ku\Z$. 
Then we consider the dual super Jordan plane $\cB^d$ described just before Lemma \ref{lema:dual-jordan}.
The Sweedler dual of $\ku \Z$ is spanned by the characters of $\Z$ and the Lie algebra of the one-dimensional torus.
Hence its smallest Hopf subalgebra realizing $\cB^d$ is isomorphic to $\ku[\zeta] \otimes \ku C_2$. 
Here $C_N$ stands for the cyclic group of order $N$ and $\zeta$ is an indeterminate.
Thus we may consider $\hopfdos \coloneqq \cB^d \# (\ku[\zeta] \otimes \ku C_2)$ and
define $\hopfdoble = \hopfuno \bowtie \hopfdos^{\operatorname{op}}$ with respect to a suitable pairing between $\hopfuno$ and $\hopfdos^{\operatorname{op}}$. It turns out that
there exists a Hopf superalgebra $\widetilde{\cD}$ such that $\hopfdoble \simeq  \widetilde{\cD} \# \ku C_2$;
this justifies a posteriori the adjective super given to $\cB$. Thus the study of 
$\hopfdoble$ reduces to that of $\widetilde{\cD}$.
We present basic properties of $\widetilde{\cD}$ in Proposition \ref{prop:Db} including the defining relations
and a PBW-basis. Next we show in Theorem \ref{prop:Db-exact} that $\widetilde{\cD}$ fits into an exact  sequence of
Hopf superalgebras
$\cO(\mathfrak{G})\hookrightarrow \widetilde{\cD}\twoheadrightarrow U(\mathfrak{osp}(1|2))$,
where $\mathfrak{G}$ is an algebraic supergroup explicitly described.
For the next result, Theorem \ref{thm:Z}, we need $\car \ku = p > 2$; then $\widetilde{\cD}$ is free module of finite rank over a central Hopf subalgebra $Z = \cO(\Bb)$ where $\Bb$ is a solvable connected algebraic group.
We close Section \ref{sec:double-superjordan} establishing some basic ring-theoretical properties of 
$\widetilde{\cD}$.

\subsubsection*{The double of the restricted super Jordan plane} In Section \ref{sec:double-restricted-superjordan}
we assume that $\car \ku = p > 2$. We realize $(V,c)$ with the braiding \eqref{eq:braidingsuperjordan} in 
$\yd{\ku C_{2p}}$. 
Let $D(H)$ be the Drinfeld double of the bosonization $H = \toba(V)\#\ku C_p$.
Again there exists a Hopf superalgebra $\cD$ 
such that $D(H) \simeq  \cD \# \ku C_2$, thus we may focus on $\cD$.
We present basic properties of $\cD$ in Proposition \ref{prop:cD-as-Hopf-superalgebra} 
and show in Theorem \ref{prop:cD-as-super-abelian-extension} an exact  sequence of
Hopf superalgebras
$\nucleo \xhookrightarrow[]{\iota} \cD  \xrightarrowdbl[]{\pi} \ugo(\mathfrak{osp}(1|2))$
where $\nucleo$ is a local commutative Hopf algebra and $\ugo(\mathfrak{osp}(1|2))$
is the restricted enveloping algebra. 
We conclude that the simple $\cD$-modules are the same as those of $\ugo(\mathfrak{osp}(1|2))$
and we present them as quotients of Verma modules.
See Theorem \ref{th:irrep-restricted-super-jordan} and Proposition \ref{prop:irrep-cD}.

\medbreak
The extensions of Hopf superalgebras mentioned above fit into a 9-term commutative diagram
where all columns and rows are exact sequences:
\begin{align}\label{eq:diagram-exact sequences}
\begin{aligned}
\xymatrix{ & \cO(\Gb)  \ar@{^{(}->}[r] \ar@{^{(}->}[d] & \cO(\Bb) \ar@{^{(}->}[d] \ar@{->>}[r] & \cO(\Gb_a^3) \ar@{^{(}->}[d]
\\
& \cO(\mathfrak{G})  \ar@{^{(}->}[r] \ar@{->>}[d] & \widetilde{\cD} \ar@{->>}[r] \ar@{->>}[d] & U(\mathfrak{osp}(1|2)) \ar@{->>}[d]
\\
& \nucleo \ar@{^{(}->}[r] & \cD \ar@{->>}[r] & \ugo(\mathfrak{osp}(1|2))
}
\end{aligned}
\end{align}
See Theorem \ref{thm:diagram-exact sequences}. This is an analogue of \cite[(0.2)]{ap} for the Jordan plane.

\section{Recollections}\label{sec:recollections}

\subsection{Conventions}
We denote by $\fp$ the field of $p$ elements. If $\ell < n \in\N_0$, then we set $\I_{\ell, n}=\{\ell, \ell +1,\dots,n\}$, $\I_n = \I_{1, n}$. Let $A$ be an algebra and $a_1,\dots,a_n\in A$, $n\in\N$. We denote by $\ku\langle a_1,\dots,a_n\rangle$ the subalgebra generated by $a_1,\dots,a_n$. 
We identify $\cV^*\ot \cV^*\simeq (\cV\ot \cV)^*$, where $\cV$ is a finite-dimensional vector space, by
\begin{align}\label{eq:dual-tensor-product}
\langle f\ot f', x\ot y \rangle &= \langle f,x\rangle \langle f',y \rangle, & f,f'\in \cV^*, x,y\in \cV.
\end{align}

The cyclic group of order $n$ is denoted by $C_n$ and the infinite cyclic group by $\Gamma$. They are always written multiplicatively. The center of a group $G$ is denoted $Z(G)$; similarly for the center of an algebra.

Let $L$ be a Hopf algebra. The kernel of the counit $\varepsilon$ is denoted $L^+$, the antipode (always assumed bijective) by $\Ss$ or by $\Ss_L$, the
space of primitive elements of $L$ is denoted by $\Pc(L)$ and the group of group-likes by $G(L)$.
The space of $g,h$-primitives is $\Pc_{g,h}(L) =\{x \in L: \Delta(x) = x\otimes h + g \otimes x\}$ where $g,h \in G(L)$.
We assume that the reader has some familiarity with the theory of Hopf algebras particularly with the 
notions of Yetter-Drinfeld module and bosonization (or biproduct), see e.g. \cite{Rad-libro}.
The category of Yetter-Drinfeld modules over $L$ is denoted by $\customyd{L}$.

The category of super vector spaces is denoted by $\Svec$. If $\cX\in\Svec$, and $x\in\cX_i$, 
then we write $|x| = i$. We set $|\cX| \coloneqq \cX_0 \cup \cX_1$. 
The symmetric tensor category $\Svec$ is identified with a full tensor subcategory of $\customyd{\ku C_2}$. 
An object $M\in\customyd{\ku C_2}$ is in $\Svec$ if the two following conditions are satisfied:
\begin{enumerate}[leftmargin=*,start=0]
\item\label{svect-equivalence-1} For every $a\in M$ such that $\genpi \xrightharpoonup[]{} a = a$, $\delta(a) = 1 \ot a$ (then $a$ is even).

\item\label{svect-equivalence-2} For every $a\in M$ such that $\genpi \xrightharpoonup[]{} a = -a$, $\delta(a) = \genpi \ot a$ (then $a$ is odd).
\end{enumerate}

Let $\cA$ be a superalgebra, i.e. an algebra in $\Svec$; $\cA$ is super commutative if $a b = (-1)^{|a||b|} b a$ for all $a,b\in|\cA|$. 

The algebra of regular functions on an (affine) algebraic super group $G$ is denoted $\cO(G)$. This is by definition a finitely generated super commutative Hopf algebra. See \cite{Masuoka} for the definitions.
As usual, $\Gb_a$ is the additive algebraic group $(\ku,+)$ and $\Gb_m$ is the multiplicative algebraic group $(\kut,\cdot)$.

Let $\polring$ be the polynomial algebra $\ku[\zeta]$ with the unique Hopf algebra structure such that $\zeta\in\Pc(\polring)$. Let $\polring_p$ be the quotient Hopf algebra $\polring/(\zeta^p- \zeta)$.

Let $V = \ku\{X_1,\dots,X_m\}$ be a vector space of dimension $m\in\N$. We denote by $\Lambda(V) = \Lambda(X_1,\dots,X_m)$ the exterior algebra of $V$. 

We denote by $[k]^{[n]} \coloneqq \prod_{i=1}^{n} (k+i-1)$ $k\in\ku$, $n\in\N$ the raising factorial, we also define $[k]^{[0]} \coloneqq 1$ for every $k\in\ku$. The unsigned Stirling numbers $\stirling{n}{k}$ are defined as the coefficients of the `raising factorial' polynomial: 
\begin{align*}
[X]^{[n]} = \prod_{i=1}^{n} (X+i-1) = \sum_{k=0}^n \stirling{n}{k} X^k \in \Z[X].
\end{align*}

Recall that a sequence of morphisms of Hopf  (super)algebras
$A\xhookrightarrow[]{\iota} C \xrightarrowdbl[]{\pi} B$
is exact, and $C$ is an extension of $A$ by $B$, if 
\begin{multicols}{2}
	\begin{enumerate}[leftmargin=*,label=\rm{(\roman*)}]
\item\label{suc-exacta-1} $\iota$ is injective.
\item\label{suc-exacta-2} $\pi$ is surjective.
\item\label{suc-exacta-3} $\ker\pi = C\iota(A)^+$.
\item\label{suc-exacta-4} $\iota(A) = C^{\operatorname{co} \pi}$.
	\end{enumerate}
\end{multicols}
We write $B = C \hopfq A$ i.e. $B$ is the Hopf algebra quotient of $C$ by $A$.

\begin{remark}\label{remark-exact-sequence-hopf}
	If $A\xhookrightarrow[]{\iota} C$ is faithfully flat and $\iota(A)$ is stable by the left adjoint action of $C$ then
	\ref{suc-exacta-1}, \ref{suc-exacta-2} and \ref{suc-exacta-3} imply \ref{suc-exacta-4}, see \cite[1.2.5, 1.2.14]{ad}, \cite{Sch}.
\end{remark}

\subsection{A brief review of Yetter-Drinfeld supermodules}\label{subsub:brief-YDsuper}
In this Subsection, $\car \ku \neq 2$. See \cite{aay-pointedsuphopf} for more details.
Let $\cA$ be a Hopf superalgebra. The category of Yetter-Drinfeld supermodules over $\cA$,
denoted by $\customyds{\cA}$, consists of super vector spaces $\cX$ such that:
\begin{enumerate}[leftmargin=*,label=\rm{(\roman*)}]
\item $\cX$ is a left supermodule over $\cA$ with action $\rightharpoonup$.
\item $\cX$ is a left supercomodule over $\cA$ with coaction $\delta$.
\item For every $a\in|\cA|$ and $u\in |\cX|$:
\begin{align*}
\delta(a\rightharpoonup x) = (-1)^{|v_{(-1)}|(|a_{(2)}|+ |a_{(3)}|) + |a_{(2)}||a_{(3)}|} 
a_{(1)} x_{(-1)} \Ss(a_{(3)})\ot a_{(2)}\rightharpoonup x_{(0)}.
\end{align*}
\end{enumerate}

Then $\customyds{\cA}$ is a braided tensor category. Namely, if 
$\cX,\cZ \in \customyds{\cA}$, then the super vector space $\cX\ot \cZ$ 
is an object in $\customyds{\cA}$  via
\begin{align*}
&a\rightharpoonup (x\ot z) = (-1)^{|a_{(2)}||x|} a_{(1)}\rightharpoonup x \ot a_{(2)} \rightharpoonup z, &\\
&\delta(x \ot z) = (-1)^{|x_{(0)}||z_{(-1)}|} x_{(-1)} z_{(-1)}\ot(x_{(0)}\ot z_{(0)}), & \forall x\in|\cX|, z\in|\cZ|.  
\end{align*}
The braiding $c\colon\cX\ot \cZ\longrightarrow \cZ\ot\cX$ is given by:
\begin{align}\label{eq:braiding-yds}
c(x\ot z) &= (-1)^{|x_{(0)}||z|} x_{(-1)}\rightharpoonup z \ot x_{(0)}.
\end{align}


\medbreak
There is an embedding functor of braided tensor categories
\begin{align}\label{eq:embedding-functor}
\mathfrak{i}\colon \sydA &\longrightarrow \customyd{\cA\#\ku C_2}, &
\cX \longmapsto \mathfrak{i}(\cX)
\end{align} 
where $\mathfrak{i}(\cX) = \cX$ as vector space with
action $\rightharpoonup$ and coaction $\delta$ as:
\begin{align*}
&\begin{aligned}
a\# \genpi^k \rightharpoonup x &= (-1)^{|x|k} a\rightharpoonup x,\\
\delta(x) &= x_{(-1)}\# \genpi^{|x_{(0)}|}\ot x_{(0)},
\end{aligned} & \forall a\in\cA, x\in|\cX|, k\in\I_{0, 1}.
\end{align*}

\subsubsection*{The super bosonization \cite{aay-pointedsuphopf}}
Let $\cR$ be a Hopf algebra in  $\sydA$. 
The comultiplication is written $\Delta_\cR(h)= h^{(1)}\ot h^{(2)}$ for $h\in\cR$. 
The  Hopf superalgebra $\cR\supbos  \cA$  is $\cR \ot \cA$ as a super vector space with structure given by:
\begin{align*}
(h \supbos  a)(f\supbos  b) &\coloneqq (-1)^{|a_{(2)}||f|} h(a_{(1)}\rightharpoonup f)\supbos  a_{(2)} b, \\
\Delta(h\supbos  a) &\coloneqq (-1)^{|(h^{(2)})_{(0)}||a_{(1)}|} h^{(1)}\supbos  (h^{(2)})_{(-1)} a_{(1)} \ot (h^{(2)})_{(0)} \supbos  a_{(2)},\\
\varepsilon(h\supbos  a) &= \varepsilon_\cR(h) \varepsilon_\cA(a),\quad \quad\quad 1\coloneqq 1_{\cR}\supbos  1_{\cA},\\
\Ss(h\supbos  a) &= (-1)^{|h_{(0)}||a|} (1\supbos  \Ss_\cA(h_{(-1)}a))(\Ss_\cR(h_{(0)})\supbos  1).
\end{align*} 
There is a canonical isomorphism $(\cR\supbos  \cA)\# \ku C_2 \simeq \mathfrak{i}(\cR)\#(\cA \# \ku C_2)$.

\subsection{Restricted  Lie superalgebras} A Lie superalgebra $\g = \g_0\oplus \g_1$ is restricted if 
$\g_0$ is a restricted Lie algebra  with $p$-operation $x\mapsto x^{[p]}$ such that 
\begin{align*}
[x^{[p]},z] &= (\ad x)^p (z) & \text{for all } x&\in \g_0, z\in\g.
\end{align*}
We refer to \cite[Chapter 3]{bmpz} for a detailed exposition.
The \emph{restricted enveloping algebra} of the restricted Lie superalgebra $\g$ is defined as
\begin{align*}
\ugo(\g)\coloneqq U(\g)/ \langle x^{[p]} - x^p : x\in\g_0 \rangle,
\end{align*}
where $U(\g)$ is the enveloping algebra. Assume for simplicity that $\dim \g < \infty$
and let $\{x_1,\dots,x_r\}$, $\{y_1,\dots,y_k\}$ be bases of $\g_0$ and $\g_1$ respectively. 
Then the Hopf superalgebra $\ugo(\g)$ has dimension $p^r2^k$; indeed it has a PBW-basis
\begin{align*}
\{x_1^{n_1}\cdots x_r^{n_r} y_1^{m_1}\cdots y_k^{m_k}\colon n_1,\dots,n_r\in\I_{0,p-1}, m_1,\dots,m_k\in\I_{0, 1}\}.
\end{align*}

\subsubsection{The ortho-symplectic Lie superalgebra $\g = \mathfrak{osp}(1|2)$ }
For our purposes we recall its structure:
$\g_0 \simeq \mathfrak{sl}_2(\ku)$ with Cartan generators $\{e, f, h\}$; $\g_1 = \ku\{\psi_+, \psi_-\}$ is the natural $\mathfrak{sl}_2(\ku)$-module, hence the bracket is
\begin{align*}
[e, \psi_+] &= 0, & [h, \psi_+] &= \psi_+, & [f, \psi_+] &= \psi_-,\\
[e, \psi_-] &= \psi_+, & [h, \psi_-] &= -\psi_-, & [f, \psi_-] &= 0,
\\
[\psi_+,\psi_+] &= 2e, & [\psi_-,\psi_-] &= -2f, & [\psi_+,\psi_-] &= -h. 
\end{align*}

\emph{Assume now that $\car \ku = p$.}
The algebra $\g$ has a $p$-structure given by
\begin{align*}
e^{[p]} &= 0, & h^{[p]} &= h, & f^{[p]} &= 0.  
\end{align*}
Below we consider the enveloping algebra $U(\mathfrak{osp}(1|2))$ and its restricted version $\ugo(\mathfrak{osp}(1|2))$
which are Hopf superalgebras with suitable PBW-bases.

\subsection{Nichols algebras} A braided vector space is a pair $(\cV, c)$ where $\cV$ is a vector space and
$c \in GL(\cV^{\otimes 2})$ satisfies the braid equation 
\begin{align*}
(c\ot \id)(\id\ot c)(c\ot \id) = (\id\ot c)(c\ot \id)(\id\ot c).
\end{align*}  
Then the braid group $\mathbb B_n$ acts on $\cV^{\otimes n}$; the Nichols algebra $\toba(\cV)$ is defined taking the quotient of
the quantum symmetrizer in each degree. See \cite{A-leyva} for details.
A realization of $(\cV, c)$ over a Hopf algebra $L$ is a structure of Yetter-Drinfeld $L$-module on $\cV$
such that $c$ coincides with the categorical braiding. Then $\toba(\cV)$ is a Hopf algebra in $\customyd{L}$ and the bosonization $\toba(\cV) \# L$ is a Hopf algebra.
If $\cV$ is finite-dimensional, then the dual vector space $\cV^*$ is braided with the transpose braiding $c^*$--recall the identification \eqref{eq:dual-tensor-product}:
\begin{align}\label{eq:braiding-transpose}
\langle c^*(f\ot f'), x\ot y \rangle &= \langle f\ot f', c(x\ot y)\rangle, & f,f'\in \cV^*, x,y\in \cV.
\end{align}

\medbreak
Similarly a realization of $(\cV,c)$ over a Hopf superalgebra $\cA$ is a structure of Yetter-Drinfeld 
$\cA$-supermodule on $\cV$ such that $c$ coincides with the categorical braiding. 
Then $\toba(\cV)$ is a Hopf algebra in $\customyds{\cA}$ and the bosonization $\toba(\cV) \# \cA$ is a Hopf 
superalgebra.
Notice that there is an isomorphism $\mathfrak{i}(\toba(\cV))\simeq \toba(\mathfrak{i}(\cV))$  where $\mathfrak{i}$
is as in \eqref{eq:embedding-functor}, see \cite[\S 1.7]{aay-pointedsuphopf}.

\medbreak 
If $(\cV,c)$ admits a realization over a Hopf superalgebra, then necessarily $\cV$ is $C_2$-graded
and $c$ preserves the grading of $\cV \otimes \cV$; such $(\cV,c)$ might be called a super braided vector space,
a concept already present in \cite[(53), p. 1610]{Kulish-Sklyanin}. Indeed there is bijective correspondence between 
super braided vector spaces and solutions of the super Yang-Baxter equation, see \emph{loc. cit.}

\section{Preliminaries}\label{sec:preliminaries}

\subsection{The dual and the double of a bosonization}
Let $L$ be a Hopf algebra and $\cV\in\customyd{L}$; we assume that $\dim L <\infty$ and $\dim\toba(\cV)<\infty$.
In this subsection we describe explicitly the dual and the double of the bosonization
$A \coloneqq \toba(\cV)\# L$. In particular we show that
$A^* \simeq \toba(\cV^*)\# L^*$,
and that $\cV^*$ has the transpose braiding \eqref{eq:braiding-transpose}.
We need some notation. First, we have morphisms of Hopf algebras 
$A\overset{\pi }{\underset{\iota}{\rightleftarrows}} L$ such that $\pi \iota = \id$.
Next we fix
\begin{itemize}[leftmargin=*]\renewcommand{\labelitemi}{$\circ$}
\item  a basis  $\{v_1,\dots,v_n\}$ of $\cV$; its dual basis is denoted by $\{w_1,\dots,w_n\}$;
\item a basis $\{h_1,\dots,h_m\}$ of $L$; its dual basis is denoted by $\{f_1,\dots,f_m\}$.
\end{itemize}
The braided tensor categories $\customyd{L}\simeq \customyd{L^*}$ are equivalent via the functor 
\begin{align*}
F\colon \customyd{L}\longrightarrow\customyd{L^*}
\end{align*} defined as follows. If $\cX\in \customyd{L}$, then $F(\cX) = \cX$ with structure
\begin{align*}
f\rightharpoonup v &= \langle f, \Ss(v_{(-1)}) \rangle v_{(0)}, & \delta(v) &= \sum_{i=0}^m \Ss^{-1}(f_i)\ot h_i\rightharpoonup v, & v\in \cX, f\in L^*,
\end{align*}
see \cite[Prop. 2.2.1]{g2}.
Let $\cW\coloneqq F(\cV^*)$.
\begin{lemma} \cite[Lemma 2.6]{g}
As a  braided vector space, $\cW\in\customyd{L^*}$ is isomorphic to $(\cV^*,c^*)$. Hence
$\dim\toba(\cW) = \dim\toba(\cV) <\infty$. \qed
\end{lemma}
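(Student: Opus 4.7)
The plan is to identify the $L^*$-Yetter--Drinfeld braiding on $\cW=F(\cV^*)$ with the transpose braiding $c^*$ on $\cV^*$ by a direct calculation, and then appeal to a standard duality of Nichols algebras for the equality of dimensions.

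First, I would record that $\cV^*$ is an object of $\customyd{L}$ with the canonical dual structure: the action is $\langle h\rightharpoonup w,v\rangle=\langle w,\Ss(h)\rightharpoonup v\rangle$ and the coaction $\delta_{\cV^*}$ is the one dual to $\delta_\cV$, characterized by the fact that the evaluation pairing $\cV^*\otimes \cV\to\ku$ is a morphism in $\customyd{L}$. Applying the functor $F$ plants on $\cW=\cV^*$ the $L^*$-action $f\rightharpoonup w=\langle f,\Ss(w_{(-1)})\rangle w_{(0)}$ and the $L^*$-coaction $\delta_{L^*}(w)=\sum_i \Ss^{-1}(f_i)\otimes (h_i\rightharpoonup w)$, where $w_{(-1)},w_{(0)}$ and $h_i\rightharpoonup w$ refer to the just-recorded $L$-Yetter--Drinfeld structure on $\cV^*$.

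Second, I would compute the braiding on $\cW$ directly from the standard Yetter--Drinfeld formula in $\customyd{L^*}$:
\begin{align*}
c_{\cW}(w\otimes w')
&=\sum_i \bigl(\Ss^{-1}(f_i)\rightharpoonup w'\bigr)\otimes \bigl(h_i\rightharpoonup w\bigr).
\end{align*}
Unpacking $\Ss^{-1}(f_i)\rightharpoonup w'=\langle \Ss^{-1}(f_i),\Ss(w'_{(-1)})\rangle w'_{(0)}$ and using $\langle \Ss_{L^*}^{-1}(f),\Ss_L(h)\rangle=\langle f,h\rangle$ together with the dual-basis identity $\sum_i\langle f_i,h\rangle h_i=h$, the sum collapses and only the $L$-structure on $\cV^*$ survives. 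Pairing the result against an arbitrary test element $v\otimes v'\in\cV\otimes\cV$ and substituting the definitions of the dual action and coaction on $\cV^*$, the antipode cancellation $\Ss(h_{(1)})h_{(2)}=\varepsilon(h)$ reduces everything to $\langle w,v_{(-1)}\rightharpoonup v'\rangle\langle w',v_{(0)}\rangle$, which by \eqref{eq:braiding-transpose} equals $\langle c^*(w\otimes w'),v\otimes v'\rangle$. Non-degeneracy of the pairing then gives $c_{\cW}=c^*$, i.e.\ $\cW\simeq(\cV^*,c^*)$ as braided vector spaces.

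Finally, for the dimensions, I would invoke the general fact that for a finite-dimensional braided vector space $(\cV,c)$ admitting a realization, the graded dual of $\toba(\cV,c)$ is isomorphic as a graded braided Hopf algebra to $\toba(\cV^*,c^*)$, so their dimensions agree; combined with the previous step this yields $\dim\toba(\cW)=\dim\toba(\cV^*,c^*)=\dim\toba(\cV)<\infty$. The only real obstacle here is strictly bookkeeping: keeping the two antipodes $\Ss_L$ and $\Ss_{L^*}$ separate, and being careful about which coaction or action is referenced at each step, so that the chain of cancellations actually produces the formula \eqref{eq:braiding-transpose} rather than its opposite or its conjugate by $\Ss$.
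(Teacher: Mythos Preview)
Your proposal is correct, but note that the paper does not actually prove this lemma: it is stated with a bare \qed\ and attributed entirely to \cite[Lemma 2.6]{g}. So you are supplying a proof where the paper supplies none, and there is nothing to compare against beyond the citation.

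Your direct computation is sound. One point worth making explicit is the convention you are using for the Yetter--Drinfeld structure on $\cV^*$ over $L$: the paper only writes $\cW\coloneqq F(\cV^*)$ without spelling this out. With your convention (action via $\Ss_L$ and coaction characterized by $\langle w_{(0)},v\rangle\, w_{(-1)} = \langle w,v_{(0)}\rangle\,\Ss_L^{-1}(v_{(-1)})$), the chain of cancellations indeed produces $c_{\cW}(w\otimes w') = w'_{(0)}\otimes (w'_{(-1)}\rightharpoonup w)$ and then the pairing identity you want. If one uses a different (but equivalent) dual convention, the computation goes through after conjugating by the appropriate power of the antipode; so your caveat about bookkeeping is exactly the right one to flag. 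The appeal to graded duality of Nichols algebras for the dimension statement is standard and is in fact how the paper uses the lemma later (see the proof of Lemma~\ref{lema:dual-jordan}).
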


\begin{prop}\label{prop:dual-of-a-bosonization}
$A^* \simeq \toba(\cW)\# L^*$.
\end{prop}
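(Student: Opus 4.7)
The strategy is to invoke Radford's biproduct theorem to write $A^*$ as a bosonization over $L^*$, and then recognize the resulting braided Hopf algebra as the Nichols algebra $\toba(\cW)$.

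First, I would dualize the split projection $A \overset{\pi}{\underset{\iota}{\rightleftarrows}} L$. Since $\pi\iota = \id_L$, the transposes satisfy $\iota^* \pi^* = \id_{L^*}$, so $\pi^* : L^* \hookrightarrow A^*$ is a Hopf algebra inclusion with retraction $\iota^*$. Applying Radford's biproduct theorem to $(A^*, \pi^*, \iota^*)$ yields a Hopf algebra isomorphism $A^* \simeq B \# L^*$, where $B \coloneqq (A^*)^{\operatorname{co} \iota^*}$ carries a canonical structure of Hopf algebra in $\customyd{L^*}$.

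Next, I would exploit the $\N_0$-grading on $A = \toba(\cV) \# L$ (with $L$ in degree $0$ and $\toba^n(\cV)$ in degree $n$) to describe $B$ explicitly. Because $A$ is a graded Hopf algebra, $A^*$ inherits a grading, and $\iota^*$ vanishes on the strictly positive part. A short computation then identifies the degree-$n$ component of $B$ with $\toba^n(\cV)^*$; in particular $B_0 = \ku$ (so $B$ is connected) and $B_1 \cong \cV^*$ as vector spaces. Translating the induced $L^*$-action and coaction on $B_1$ through the duality, these coincide precisely with the ones prescribed by the functor $F$, so $B_1 \simeq \cW$ as objects of $\customyd{L^*}$.

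To conclude, I would invoke the characterization of Nichols algebras: a graded connected Hopf algebra in $\customyd{L^*}$ that is generated as an algebra in degree one and whose space of primitives is concentrated in degree one coincides with the Nichols algebra of its degree-one part. Both conditions hold for $B$: it is generated in degree one because $\toba(\cV)$ is coradically graded (its primitives lie in $\cV$), and its primitives lie in degree one because $\toba(\cV)$ is generated by $\cV$ in degree one. Hence $B \simeq \toba(\cW)$, and combined with the Radford decomposition this gives $A^* \simeq \toba(\cW) \# L^*$. The principal technical point is the explicit verification that the Yetter-Drinfeld structure induced on $B_1$ matches the one defining $\cW = F(\cV^*)$; this amounts to threading the action and coaction through the dualization and matching the outcome with the formulas defining $F$.
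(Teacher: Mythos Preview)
Your overall strategy coincides with the paper's: dualize the split pair $(\pi,\iota)$, apply Radford's biproduct theorem to write $A^*\simeq \cR\# L^*$ with $\cR=(A^*)^{\operatorname{co}\iota^*}$, use the grading to see that $\cR$ is connected, and identify $\cR^1\simeq\cW$ in $\customyd{L^*}$. The paper carries out this last identification explicitly by computing the action and coaction on the elements $\overline w_i=w_i\underline\#\varepsilon$, which is exactly the ``principal technical point'' you flag.

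The divergence is in the final step. You conclude $\cR\simeq\toba(\cW)$ by invoking the characterization of Nichols algebras (connected, generated in degree one, primitives in degree one), appealing to the duality between ``generated in degree one'' for $\toba(\cV)$ and ``primitives in degree one'' for its dual, and vice versa. The paper instead argues by dimension: the subalgebra $\cR'\subseteq\cR$ generated by $\cR^1$ is a pre-Nichols algebra of $\cW$, so
\[
\dim\toba(\cW)\le\dim\cR'\le\dim\cR=\dim\toba(\cV)=\dim\toba(\cW),
\]
forcing $\cR'=\cR=\toba(\cW)$. Your route is more conceptual and would survive in infinite-dimensional (graded-dual) settings, but it implicitly requires knowing that $\cR$ is the braided-Hopf dual of $\toba(\cV)$ (not just degree-wise linear dual), so that the generation/primitives duality applies; this is true but is an extra input you should state. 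The paper's dimension-counting bypasses that identification entirely and is the shorter argument under the standing hypothesis $\dim A<\infty$.
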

\begin{proof}
By transposing the maps $\pi$ and $\iota$ above,  
we get maps $A^*\overset{\iota^*}{\underset{\pi ^*}{\rightleftarrows}} L^*$ with $\iota^*\pi ^* = \id$. Then
$A^*\simeq \cR \# L^*$, where $\cR = (A^*)^{\operatorname{co}\iota^*}$ is a braided Hopf algebra in $\customyd{L^*}$. Since $A = \bigoplus_{n\in\N_0} A^n$ is graded, so are
$A^*= \bigoplus_{n\in\N_0} (A^*)^n$ and $\cR= \bigoplus_{n\in\N_0} \cR^n$. We proceed in three steps:

\subsubsection*{(i) Find a basis of $\cR^1\in\customyd{L^*}$.}
Since $A^1 \simeq \cV\ot L$, $(A^*)^1 \simeq\cV^*\ot L^*$ as a vector space. 
Given $w\in \cV^*$ and $f\in L^*$ we set $w\underline{\#}f \coloneqq w\ot f \in(A^*)^1$ by
\begin{align*}
\langle w\underline{\#} f, v\# h\rangle &= \langle w, v\rangle \langle f, h\rangle, & \forall v\in \cV, h\in L.
\end{align*}
Then $\{w_i\underline{\#} f_j\colon i\in \I_n, j\in\I_m \}$ is a basis of $(A^*)^1$. Let $\overline{w}_i \coloneqq w_i\underline{\#} \varepsilon$, $i\in \I_n$. We claim that $\{\overline{w}_1,\dots,\overline{w}_n\}$ is a basis of $\cR^1$. 
For this, we compute  
\begin{align*}
\langle\Delta(\overline{w}_i), 1\# h_\ell \ot v_j\# h_k\rangle &= \langle w_i,h_\ell\rightharpoonup v_j\rangle \varepsilon(h_k),\\
\langle \Delta(\overline{w}_i), v_j\# h_k\ot 1\# h_\ell\rangle &= \varepsilon(h_k h_\ell) \delta_{i,j},
\end{align*}
for all $i,j \in \I_n$, $k,\ell\in \I_m$. Since $\Delta$ preserves the grading, we have
\begin{align*}
\Delta(\overline{w}_i) &= \sum_{\ell,k = 1}^{m} \varepsilon(h_k h_\ell) w_i\underline{\#} f_k \ot f_\ell + \sum_{\ell,k = 1}^m \sum_{j=1}^n \langle w_i,h_\ell\rightharpoonup v_j\rangle \varepsilon(h_k)  f_\ell \ot w_j\underline{\#}f_k\\
&=\overline{w}_i \ot 1 +  \sum_{\ell = 1}^m \sum_{j=1}^n  \langle w_i,h_\ell\rightharpoonup v_j\rangle f_\ell \ot \overline{w}_j.
\end{align*}
Then $\overline{w}_i\in\cR^1$ because $\iota^*(\overline{w}_j)=0$ for all $j\in \I_n$. 
Since the $\overline{w}_i$'s are linearly independent and $\dim \cR^1 = \dim \toba(\cV)^1 = \dim \cV$, the claim follows.

\subsubsection*{(ii) $\cR^1\simeq \cW$ in $\customyd{L^*}$.}
To compute the action and coaction of $\cR^1$ we need the multiplication of elements of the form $(w\underline{\#}f) f'$ and $f'(w\underline{\#}f)$ for $f,f'\in L^*$, $w\in \cV^*$. Since $A^0\simeq L$, $(A^*)^0\simeq L^*$, we see that
\begin{align*}
\langle (w\underline{\#} f) f', v\# h\rangle &= \langle w\underline{\#} ff', v\# h\rangle, \\
\langle f' (w\underline{\#} f), v\# h\rangle &= \langle f'_{(1)}\rightharpoonup w \underline{\#} f'_{(2)}f, v\#h\rangle
\end{align*}
for all $v\in \cV$, $h\in L$. Then 
\begin{align*}
f'(w\underline{\#} f) &= f'_{(1)}\rightharpoonup w \underline{\#} f'_{(2)}f 
&&\text{and}&  (w\underline{\#} f) f' &= w\underline{\#} ff'
\end{align*}
for $w\in \cV^*, f,f'\in L^*$. Hence the action of $L^*$ in $\cR^1$ is given by
\begin{align*}
f\rightharpoonup \overline{w}_i &= f_{(1)} (w_i\underline{\#} \varepsilon) \Ss(f_{(2)}) = f_{(1)}\rightharpoonup w_i \underline{\#} f_{(2)}\Ss(f_{(3)}) \\
&=f_{(1)}\rightharpoonup w_i\underline{\#} \varepsilon(f_{(2)})\varepsilon = f\rightharpoonup w_i \underline{\#} \varepsilon \\
&= \sum_{j=1}^{n} \langle f, (v_j)_{(-1)}\rangle \langle w_i, (v_j)_{(0)}\rangle \overline{w}_j,\quad i\in\I_n, f\in L^*.
\end{align*}
The formula for the coaction follows from the comultiplication above: 
\begin{align*}
\delta(\overline{w}_i) &= (\iota^*\ot \id)\circ \Delta(\overline{w}_i) = \sum_{\ell=1}^{m}\sum_{j=1}^{n} \langle w_i, h_\ell\rightharpoonup v_j\rangle f_\ell\ot \overline{w}_j, & i\in \I_n.
\end{align*}
Thus $\cW\simeq \cR^1$ in $\customyd{L^*}$.

\subsubsection*{(iii) $\cR \simeq \toba(\cW)\simeq\toba(\cV^*)$} Let $\cR'$ be the braided Hopf subalgebra of $\cR$ generated by $\cR^1$. Then $\cR'$ is a pre-Nichols algebra of $\cR^1 \simeq \cW$. Thus
\begin{align*}
\dim \toba(\cW) \leq \dim \cR' \leq \dim \cR = \dim \toba(\cV) = \dim\toba(\cW);
\end{align*}
hence $\cR' = \cR = \toba(\cW)$ and the result follows.
\end{proof}

Next we describe the relations of the Drinfeld double $D(A)$ of $A$. 
Recall that this is the Hopf algebra 
whose underlying coalgebra is $ A\ot A^{* \operatorname{op}}$ and with multiplication defined as follows.
Let $a\bowtie r:= a \otimes r$ in $D(A)$. Then 
\begin{align*}
(a\bowtie r)(a'\bowtie r') &= \big\langle r_{(1)},a'_{(1)}\big\rangle \big\langle r_{(3)}, \Ss(a'_{(3)}) \big\rangle (aa'_{(2)}\bowtie r'r_{(2)}), & 
\end{align*}
$a, A'\in A$, $r, r'\in A^{* \operatorname{op}},$
where $r r' = m(r\ot r')$ is  in $A^*$, not in $A^{* \operatorname{op}}$. 

\begin{remark}\label{rem:double} \cite{dt}
If $A$ is not finite-dimensional, we may define its double
with respect to another Hopf algebra $B$ provided with
a skew-pairing,  i.~e. a linear map 
$\tau\colon A\otimes B\to \ku$ satisfying
\begin{align}\label{eq:skew-pairing}
\begin{aligned}
\tau(a\widetilde a\ot b) &= \tau(a \ot b_{(1)}) \tau(\widetilde a \ot b_{(2)}),&
\tau(1 \ot b) &= \varepsilon(b), & a,\widetilde a &\in A,
\\
\tau(a \ot \widetilde bb) &= \tau(a_{(1)} \ot b) \tau(a_{(2)} \ot \widetilde b),&
\tau(a \ot 1) &= \varepsilon(a), &b, \widetilde b &\in B.
\end{aligned}
\end{align}
Let $\sigma\colon (A \ot B)\ot (A \ot B)\to  \ku$ be the
2-cocycle  associated to $\tau$, where $A \ot B$ has the structure of tensor product Hopf algebra, that is
\begin{align*}
\sigma(a\ot b, \widetilde{a}\ot \widetilde{b}) &= \varepsilon(a)\varepsilon(\widetilde{b}) \tau(\widetilde{a}\ot b), & a,\widetilde{a} &\in A, b,\widetilde{b} &\in B.
\end{align*}
Then the double of $A$ (with respect to $B$ and $\tau$) is the
Hopf algebra $A\ot B$ twisted by $\sigma$, i.~e. $A \bowtie B \eqqcolon (A\ot B)_{\sigma}$.

\end{remark}

\medbreak
We fix the following presentations for $\toba(\cV)$, $\toba(\cW)^{\operatorname{op}}$ and $D(L)$:
\begin{align}\label{eq:general-presentation-by-generators-and-relations-for-Nichols}
\begin{aligned}
\toba(\cV) &= \ku \langle v_1,\dots,v_n \vert r_1,\dots,r_{n_1}\rangle, 
\\ \toba(\cW)^{\operatorname{op}} &= \ku \langle w_1,\dots,w_n \vert r'_1,\dots,r'_{n_2}\rangle, 
\\
D(L) &=\ku \langle s_1,\dots,s_{m_0}, s'_1,\dots, s'_{m_1} \vert \widetilde{r}_1,\dots,\widetilde{r}_{n_3}\rangle,
\end{aligned}
\end{align}  
with $s_1,\dots,s_{m_0}$ and $s'_1,\dots, s'_{m_1}$ generators of $L$ and $(L^*)^{\operatorname{op}}$ respectively. Then we have the following presentation for $D(A)$.

\begin{prop}\label{prop:the-double-in-general}
The algebra $D(A)$ is presented by generators $v_1,\dots,v_n$, $w_1,\dots,w_n$, $s_1,\dots,s_{m_0}$, $s'_0,\dots, s'_{m_1}$ with relations \eqref{eq:general-presentation-by-generators-and-relations-for-Nichols} and

\begin{align}\label{eq:relations-doble-general}
\begin{aligned}
s'_i v_j &= \langle (s'_i)_{(1)}, (v_j)_{(-1)} \rangle (v_j)_{(0)} (s'_i)_{(2)}, \quad (i,j)\in\I_{m_1}\times \I_n, \\
w_i s_j &= \sum_{\ell=1}^n \langle w_i, (s_j)_{(1)}\rightharpoonup v_\ell\rangle (s_j)_{(2)} w_\ell, \quad (i,j)\in\I_{n}\times \I_{m_0}, \\
w_i v_j &= \langle w_i, v_j\rangle 1 + \sum_{\ell=1}^n \langle w_i, (v_j)_{(-1)}\rightharpoonup v_\ell\rangle (v_j)_{(0)} w_\ell\\
+\sum_{k=1}^m \sum_{\ell,t=1}^n &\langle w_i, (v_j)_{(-2)}\rightharpoonup v_\ell\rangle \langle w_\ell, h_k\rightharpoonup v_t \rangle
\langle \overline{w}_t, \Ss((v_j)_{(0)}) \rangle (v_j)_{(-1)} f_k,\\
& \quad (i,j)\in\I_n\times \I_n. 
\end{aligned}
\end{align}
Hence $D(A)$ admits a triangular decomposition 
\begin{align}\label{eq:triag-decomp-in-general}
D(A) \simeq \toba(\cV)\ot D(L) \ot \toba(\cW)^{\operatorname{op}}.
\end{align}
\end{prop}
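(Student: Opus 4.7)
The plan is to identify $A^{*\operatorname{op}}$ via Proposition \ref{prop:dual-of-a-bosonization}, to read off each relation of \eqref{eq:relations-doble-general} directly from the multiplication formula of $D(A)$ displayed just above the statement, and to deduce the triangular decomposition \eqref{eq:triag-decomp-in-general} from Radford's biproduct structure.

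\emph{Setup and inherited relations.} Proposition \ref{prop:dual-of-a-bosonization} gives $A^{*\operatorname{op}} \simeq (\toba(\cW) \# L^{*})^{\operatorname{op}}$, so the $w_i, s'_j$ do generate it. The Hopf subalgebra inclusions $L \hookrightarrow A$ and $L^{*} \hookrightarrow A^{*}$ intertwine the $A$--$A^*$ duality with the canonical $L$--$L^*$ one, hence the Drinfeld double $D(L)$ embeds into $D(A)$ as a Hopf subalgebra; similarly $\toba(\cV)$ and $\toba(\cW)^{\operatorname{op}}$ embed as subalgebras. Consequently the relations $r_k$, $r'_k$, $\widetilde r_k$ of \eqref{eq:general-presentation-by-generators-and-relations-for-Nichols} hold automatically in $D(A)$.

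\emph{Cross-relations.} Each identity in \eqref{eq:relations-doble-general} is obtained by specializing $(a\bowtie r)(a'\bowtie r')$ with exactly one of $a, r$ and one of $a', r'$ trivial. The three inputs one needs are: the bosonization coproduct $\Delta_{A}(v_j) = v_j\ot 1 + (v_j)_{(-1)}\ot(v_j)_{(0)}$ in $A$; the formula for $\Delta_{A^*}(\overline{w}_i)$ established inside the proof of Proposition \ref{prop:dual-of-a-bosonization}; and the vanishing observations $\langle \cW, L\rangle = 0$ and $\langle L^*,\cV\rangle = 0$. For $s'_i v_j$, I would expand $\Delta^{(2)}(v_j) = v_j\ot 1\ot 1 + (v_j)_{(-1)}\ot(v_j)_{(0)}\ot 1 + (v_j)_{(-2)}\ot(v_j)_{(-1)}\ot(v_j)_{(0)}$; only the middle summand passes the constraints (both outer positions must land in $L$ to pair with $(s'_i)_{(1)},(s'_i)_{(3)}\in L^*$), and the counit then collapses the third slot to produce the stated formula. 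The identity $w_i s_j$ is obtained symmetrically. For $w_i v_j$ I would expand \emph{both} three-fold coproducts; of the nine resulting cross-terms exactly three survive the degree constraints, namely the $(1,1)$, $(2,2)$ and $(3,3)$ pairs of summands, yielding respectively $\langle w_i,v_j\rangle 1$, the sum $\sum_\ell \langle w_i, (v_j)_{(-1)}\rightharpoonup v_\ell\rangle (v_j)_{(0)} w_\ell$, and the triple sum in \eqref{eq:relations-doble-general}. The last of these requires a further unwinding of $\overline{w}_{i,(-2)}\ot \overline{w}_{i,(-1)}\ot \overline{w}_{i,(0)}$ as an iterated $L^*$-coaction together with the contraction $\sum_\ell \langle f_\ell,(v_j)_{(-2)}\rangle h_\ell = (v_j)_{(-2)}$.

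\emph{Triangular decomposition and completeness.} As vector spaces $D(A) \simeq A\ot A^{*\operatorname{op}} \simeq \toba(\cV)\ot L \ot (L^*)^{\operatorname{op}}\ot \toba(\cW)^{\operatorname{op}} \simeq \toba(\cV)\ot D(L)\ot\toba(\cW)^{\operatorname{op}}$, which yields \eqref{eq:triag-decomp-in-general}. To conclude that the listed presentation is complete, let $E$ denote the algebra presented by those generators and relations; the cross-relations just derived let one rewrite any monomial of $E$ in the normal form $(x\in\toba(\cV))\cdot(d\in D(L))\cdot(y\in\toba(\cW)^{\operatorname{op}})$, and the canonical surjection $E\twoheadrightarrow D(A)$ is then forced to be bijective by \eqref{eq:triag-decomp-in-general}. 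The main obstacle is the derivation of $w_i v_j$: enumerating which of the nine bilinear cross-terms survive the pairing constraints and then unwinding the iterated coaction on $\overline{w}_i$ without losing track of whether each factor sits in $\cV$, $L$, $\cW$ or $L^*$ is the delicate bookkeeping step.
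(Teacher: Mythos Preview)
Your proposal is correct and follows essentially the same approach as the paper: verify that the cross-relations \eqref{eq:relations-doble-general} hold in $D(A)$ from the double's multiplication formula, so that the abstractly presented algebra $E$ surjects onto $D(A)$; then use the straightening of monomials into the normal form $\toba(\cV)\cdot D(L)\cdot\toba(\cW)^{\operatorname{op}}$ together with the dimension count $\dim D(A)=\dim\toba(\cV)\cdot\dim D(L)\cdot\dim\toba(\cW)^{\operatorname{op}}$ to conclude that the surjection is an isomorphism and that the multiplication map \eqref{eq:triag-decomp-in-general} is bijective. The only cosmetic difference is that the paper simply asserts the cross-relations hold ``by definition of $D(A)$ and the formulas above'' and derives the triangular decomposition as a consequence of the presentation, whereas you spell out the coproduct bookkeeping for each relation and read off the vector-space dimension of $D(A)$ directly from $A\otimes A^{*\operatorname{op}}$ before invoking it; the logical skeleton is the same.
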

\begin{proof}
Let $A$ be the algebra presented by generators and relations as above. By definition of $D(A)$ and the formulas above there exists an algebra map $A\twoheadrightarrow  D(A)$. Hence $\dim A \geq D(A)$. By definition of $A$ there exist morphisms
\begin{align*}
\psi_1&\colon \toba(\cV)\longrightarrow A,& \psi_2&\colon \toba(\cW)^{\operatorname{op}} \longrightarrow A, &\psi_3&\colon D(L)\longrightarrow A.
\end{align*} 
Let $m\colon A\ot A\longrightarrow A$ be the multiplication. We have a linear map
\begin{align}\label{eq:triag-decomp}
\phi\colon\toba(\cV)\ot D(L) \ot \toba(\cW)^{\operatorname{op}} \xrightarrow{\psi_1\ot\psi_2\ot\psi_3} A\ot A\ot A \xrightarrow{m\circ(m\ot\id)} A
\end{align}
which is surjective by \eqref{eq:relations-doble-general}, since every product of the generators can be rewritten to get 
a sum of elements of the form $abc$ with $a\in\toba(\cV)$, $b\in D(L)$ and $c\in \toba(\cW)^{\operatorname{op}}$. Thus 
\begin{align*}
\dim D(A) = \dim \toba(\cV)\cdot \dim D(L) \cdot\dim\toba(\cW)^{\operatorname{op}} \geq \dim A,
\end{align*}
hence $D(A)\simeq A$. As for \eqref{eq:triag-decomp-in-general}, 
\eqref{eq:triag-decomp} provides the desired isomorphism.
\end{proof}

\subsection{Objects of interest} 
In this Subsection we introduce the braided vector spaces of our interest and their Nichols algebras, cf. \cite{aah-triang,aah-oddchar}.

\subsubsection*{Braided vector spaces}
The braided vector space  $\left(\cV(-1, 2), c\right) \eqqcolon \left(V, c_{V}\right)$, called the $-1$-block in \cite{aah-triang}, 
has a basis $\{\xb_1,\xb_2\}$ and braiding
\begin{align}\tag{\ref{eq:braidingsuperjordan}}
c(x_i\ot x_1) &= - x_1\ot x_i, & c(x_i\ot x_2) &= (- x_2 +x_1)\ot x_i,& i &=1,2.
\end{align}
For simplicity we denote
$\left(W, c_{W}\right) \coloneqq \left(\cV(-1,2)^*, c^*\right)$,  cf. \eqref{eq:braiding-transpose}.
Let $\{u_1,u_2\}$ be the basis of $W$ given by $\langle u_i, x_j \rangle = 1 - \delta_{i,j}$, $i,j\in \I_2$.
In this basis $c_{W}$ is
\begin{align}\label{eq:braidingsuperjordandual}
c_{W}( u_1\ot u_i) &= -u_i\ot  u_1, & 
c_{W}( u_2\ot u_i) &= u_i \ot ( u_1 - u_2), & i &=1,2.
\end{align}
Then $(W,c_W)\simeq (V,c_V^{-1})$ as braided vector spaces, via  $u_1\mapsto x_1$, $u_2\mapsto -x_2$.

\subsubsection*{The super Jordan plane} This is the graded algebra
\begin{align*}
\cB = \ku \langle x_1, x_2 \vert x_1^2, x_2 x_{21} - x_{21} x_2 - x_1 x_{21}\rangle
\end{align*}  
with the  braided Hopf algebra structure extending the braiding \eqref{eq:braidingsuperjordan}, cf. \cite{Tak}.

\begin{lemma} 
	\begin{enumerate}[leftmargin=*,label=\rm{(\alph*)}]
\item \cite{aah-triang} ($\car \ku \neq 2$) The ordered monomials
\begin{align}\label{eq:monomials:jordan}
x_1^{n_1} x_{21}^{n_{21}} x_2^{n_2}
\end{align}
with $(n_1, n_{21}, n_2) \in\I_{0, 1} \times \N_{0} \times \N_{0}$ form a basis of $\cB$ and $\GK \cB = 2$. 

\medbreak
\item ($\car \ku = 0$) \cite{aah-triang} $\cB$ is isomorphic to  $\toba(V)$ as braided Hopf algebras. 

\medbreak
\item ($\car \ku = p$) \cite{aah-oddchar}
The restricted super Jordan plane $\cB/ \langle x_{21}^p, x_2^{2p}\rangle$
is isomorphic to $\toba(V)$ as braided Hopf algebras. The ordered monomials \eqref{eq:monomials:jordan} with 
$(n_1, n_{21}, n_2) \in\I_{0, 1} \times \I_{0, p-1} \times \I_{0,2p-1}$ form a basis of $\toba(V)$.  \qed
	\end{enumerate}
\end{lemma}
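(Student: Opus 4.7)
The plan is to treat the three parts in turn, with the PBW basis of (a) as the foundation on which (b) and (c) rest.

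For part (a), I would first show that the ordered monomials $x_1^{n_1} x_{21}^{n_{21}} x_2^{n_2}$ span $\cB$ via a rewriting/diamond-lemma argument. Fix a monomial order (e.g.\ degree-lex with $x_2 > x_{21} > x_1$). The relation $x_1^2 = 0$ kills any repetition of $x_1$'s after adjacent reduction, and the relation $x_2 x_{21} = x_{21} x_2 + x_1 x_{21}$ lets me move factors of $x_{21}$ past $x_2$ to the left with a lower-order correction. The identity $x_2 x_1 = x_{21} - x_1 x_2$ converts any $x_2 x_1$ into a combination of allowed monomials. Checking that the ambiguities $x_1 \cdot x_1 \cdot x_1$ and $x_2 \cdot x_2 \cdot x_{21}$ resolve to the same normal form establishes confluence. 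For linear independence, the cleanest path is to exhibit a faithful module: for instance realise $\cB$ inside $\End_\ku(\Lambda(\xi)\otimes \ku[t])$ by letting $x_1$ act as multiplication by $\xi$ and $x_2$ as a twisted derivation adapted to the braiding~\eqref{eq:braidingsuperjordan}; verify the defining relations, and check that the monomials \eqref{eq:monomials:jordan} act as linearly independent operators on the cyclic vector $1$. The Hilbert series from the PBW basis then yields polynomial growth of degree $2$, giving $\GK \cB = 2$.

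For part (b), the task is to identify $\cB$ with $\toba(V)$, the quotient of $T(V)$ by the largest homogeneous biideal contained in $\bigoplus_{n\geq 2} T^n(V)$, for the braided Hopf structure coming from~\eqref{eq:braidingsuperjordan}. Two inclusions are needed. First, I would verify that $x_1^2$ and $x_2 x_{21} - x_{21}x_2 - x_1 x_{21}$ are primitive in $T(V)$: compute the braided comultiplication using the shuffle formula, where the braiding acts as $c(x_i\otimes x_1) = -x_1\otimes x_i$ and $c(x_i\otimes x_2) = (-x_2+x_1)\otimes x_i$, and observe the cross-terms cancel (the coefficient $-1$ on $x_1\otimes x_1$ makes $x_1^2$ primitive, and a short calculation handles the cubic). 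This gives a surjection $\cB \twoheadrightarrow \toba(V)$. Conversely, I would show $\toba(V)$ has no additional relations: since $\cB$ carries a braided Hopf algebra structure extending~\eqref{eq:braidingsuperjordan} (by~\cite{Tak}), it is a pre-Nichols algebra of $V$, so $\cB \twoheadrightarrow \toba(V)$; combined with the explicit PBW basis one computes, degree by degree, that the only primitive element of $\cB$ in degree $\geq 2$ is zero. This can be done by taking the image of a putative primitive and using the coproducts $\Delta(x_{21}) = x_{21}\otimes 1 + 1\otimes x_{21} + \text{cross terms}$ and the braided analogue of Takeuchi's criterion to force vanishing.

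For part (c), the approach is parallel but requires two more primitive relations. Working in the pre-Nichols algebra $\cB$ in characteristic $p>2$, verify that both $x_{21}^p$ and $x_2^{2p}$ are primitive. Since $x_{21}$ is $(-1)$-commutative with itself in a suitable sense dictated by~\eqref{eq:braidingsuperjordan}, the braided Frobenius lemma (or a direct computation using that $p$-th powers of primitive-like elements with appropriate braiding become primitive in characteristic $p$) gives primitivity of $x_{21}^p$; the element $x_2^{2p}$ requires an analogous but more delicate calculation via the $2p$-th iterate of the coproduct, taking into account that $x_2$ is not primitive but $[2]_q$-type considerations produce the right cancellations. Thus the ideal $\langle x_{21}^p, x_2^{2p}\rangle$ lies in the defining ideal of $\toba(V)$, so $\cB/\langle x_{21}^p, x_2^{2p}\rangle \twoheadrightarrow \toba(V)$, and the PBW basis of~\eqref{eq:monomials:jordan} descends to the claimed restricted basis with $n_1\in\I_{0,1}$, $n_{21}\in\I_{0,p-1}$, $n_2\in\I_{0,2p-1}$. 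Finally, to see that this surjection is an isomorphism one checks that $\toba(V)$ has dimension at least $2\cdot p\cdot 2p = 4p^2$, again by a faithful representation or by Hilbert-series matching.

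The main obstacle is the primitivity verification for $x_2^{2p}$ in part (c): the coproduct of $x_2^n$ involves sums weighted by $q$-binomial-type coefficients built from the non-diagonal braiding~\eqref{eq:braidingsuperjordan}, and showing these collapse to $x_2^{2p}\otimes 1 + 1\otimes x_2^{2p}$ in characteristic $p$ is the genuinely delicate combinatorial step. Likewise for (b), ruling out all higher-degree primitives in $\cB$ is the nontrivial content, though it is controlled by the PBW basis of (a) together with Takeuchi's bialgebra-duality criterion.
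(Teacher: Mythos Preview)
The paper does not prove this lemma at all: each part carries a citation to \cite{aah-triang} or \cite{aah-oddchar}, and the statement closes with \qed. So there is no argument in the paper to compare your sketch against; the result is imported wholesale from the references. (Note that the paper does prove the analogous statement for $\cB^d$ in Lemma~\ref{lema:dual-jordan}, and there the method is: verify the defining relations are primitive so that the algebra is pre-Nichols, then invoke \cite[Lemma 2.6]{g} to match graded dimensions with $\toba(V)$, which is already known from the cited references.)

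Your outline for (a) and (c) is reasonable. The soft spot is (b): ``degree by degree, the only primitive element of $\cB$ in degree $\geq 2$ is zero'' is not a proof, since there are infinitely many degrees and you give no uniform mechanism for the vanishing. What is actually needed is an independent computation of the Hilbert series of $\toba(V)$ --- for instance via skew-derivations $\partial_1,\partial_2$ dual to $x_1,x_2$, showing that $\bigcap_i \ker\partial_i = \ku$ on $\cB$, or via the PBW-type theory for Nichols algebras of rank two used in \cite{aah-triang}. Your invocation of Takeuchi's criterion points in the right direction but does not by itself close the gap. Similarly in (c), exhibiting a faithful module of dimension $4p^2$ for $\toba(V)$ is real work; the usual route is again through skew-derivations or through the explicit coproduct formulas analogous to \eqref{eq:coproduct-dual-super-jordan}.
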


Set $u_{21} \coloneqq u_1 u_2 + u_2 u_1$.
We define the \emph{dual super Jordan plane}  as 
the algebra $\cB^d$ presented by generators $u_1$ and $u_2$ with defining relations
\begin{align}\label{eq:relations-H-dual}
u_1^2 &=0, &  u_2 u_{21} &= u_{21} u_2 - u_1 u_{21}
\end{align}  
with the  braided Hopf algebra structure extending the braiding \eqref{eq:braidingsuperjordandual}, cf. \cite{Tak}.
Observe that $\cB^d$ is not isomorphic to $\cB$ as braided Hopf algebras.
The \emph{restricted dual Jordan plane} is the quotient of $\cB^d$ by the relations
\begin{align}\label{eq:relations-H-dual2}
u_2^{2p}&=0, & u_{21}^p&=0.
\end{align}  

\begin{lemma} \label{lema:dual-jordan}
	\begin{enumerate}[leftmargin=*,label=\rm{(\alph*)}]
\item\label{item:Jordandual} ($\car \ku = 0$) $\cB^d\simeq \toba(W)$ as braided Hopf algebras. 

\medbreak
\item\label{item:Jordandual-restricted} ($\car \ku = p$) The restricted dual Jordan plane 
is isomorphic to $\toba(W)$ as braided Hopf algebras. It has dimension $4p^2$; indeed the 
monomials 
\begin{align}\label{eq:monomials:jordan-dual}
u_1^{n_1} u_{21}^{n_{21}} u_2^{n_2}
\end{align}
with $(n_1, n_{21}, n_2) \in\I_{0, 1} \times \I_{0, p-1} \times \I_{0,2p-1}$ form a basis of $\toba(W)$.  
	\end{enumerate}
\end{lemma}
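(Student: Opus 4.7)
The plan is to combine the identification $(W, c_W) \simeq (V, c_V^{-1})$ of braided vector spaces, recorded just before the statement, with direct verification of the relations in $\toba(W)$. First, I would check that the defining relations \eqref{eq:relations-H-dual} hold in $\toba(W)$. The equality $u_1^2 = 0$ is immediate since $c_W(u_1 \otimes u_1) = -u_1 \otimes u_1$ by \eqref{eq:braidingsuperjordandual}, placing $u_1 \otimes u_1$ in $\ker(\id + c_W)$. For the cubic relation, I would compute the coproduct of $u_2 u_{21} - u_{21} u_2 + u_1 u_{21}$ in the pre-Nichols algebra $T(W)/(u_1^2)$ using $c_W$ and show it is primitive there; since $\toba(W)$ contains no primitives in degree $\geq 2$, this expression must vanish in $\toba(W)$. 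This yields a surjection of graded braided Hopf algebras $\pi\colon \cB^d \twoheadrightarrow \toba(W)$. For part (b), one additionally verifies $u_{21}^p, u_2^{2p} \in \ker \pi$, which follows by transferring the known primitive relations $x_{21}^p, x_2^{2p}$ (from \cite{aah-oddchar}) via the isomorphism of braided vector spaces.

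Next, I would show the monomials \eqref{eq:monomials:jordan-dual} span $\cB^d$. Using the relations \eqref{eq:relations-H-dual} (together with \eqref{eq:relations-H-dual2} in the restricted case) as rewriting rules under a suitable term order, one applies Bergman's diamond lemma and checks that the critical overlap ambiguities reduce consistently to a common normal form. This produces the desired spanning set, with exponents in $\I_{0,1} \times \N_0 \times \N_0$ for (a) and in $\I_{0,1} \times \I_{0,p-1} \times \I_{0,2p-1}$ for (b).

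Finally, to promote $\pi$ to an isomorphism, I would use $\toba(W) \simeq \toba(V, c_V^{-1})$ together with the standard fact that $\toba(V, c)$ and $\toba(V, c^{-1})$ share the same Hilbert series (the quantum symmetrizers for $c$ and $c^{-1}$ are related by invertible automorphisms of $V^{\otimes n}$ and thus have equal rank in each degree). Hence $\dim \toba(W)_n = \dim \cB_n$ in every degree. Combined with the spanning step this gives (a) and also $\GK \toba(W) = 2$; part (b) follows analogously, with the dimension count $2 \cdot p \cdot 2p = 4p^2$ matching the cardinality of the basis \eqref{eq:monomials:jordan-dual}. The main obstacle is the primitivity check for the cubic relation in the first step: although it parallels the analogous calculation for $\cB$ carried out in \cite{aah-triang}, it does not reduce to it, and demands a careful coproduct computation with the braiding $c_W$ of $W$.
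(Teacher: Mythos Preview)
Your overall strategy matches the paper's: verify that the defining relations of $\cB^d$ are primitive (so $\cB^d$ is a pre-Nichols algebra of $W$), then conclude via the equality of Hilbert series of $\toba(V)$ and $\toba(W)$. Two points deserve comment.

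First, the diamond lemma is unnecessary. The paper observes that the map $u_1\mapsto x_1$, $u_2\mapsto -x_2$ sends the relations \eqref{eq:relations-H-dual} precisely to \eqref{eq:def-super-jordan}, so $\cB^d\simeq\cB$ \emph{as algebras}. The PBW basis of $\cB^d$ is then inherited from that of $\cB$ for free, in both characteristic $0$ and $p$.

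Second, and more seriously, your ``transfer'' argument for part \ref{item:Jordandual-restricted} has a gap. You propose to deduce $u_{21}^p,\,u_2^{2p}\in\ker\pi$ from the primitivity of $x_{21}^p,\,x_2^{2p}$ in $\cB$ via the isomorphism $(W,c_W)\simeq(V,c_V^{-1})$. But that isomorphism identifies the braided Hopf algebra structure on $T(W)$ coming from $c_W$ with the one on $T(V)$ coming from $c_V^{-1}$, \emph{not} from $c_V$. The results in \cite{aah-oddchar} establish primitivity with respect to the coproduct built from $c_V$; since the coproduct on $T(V)$ depends on the braiding, this says nothing a priori about primitivity for $c_V^{-1}$. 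You in fact acknowledge exactly this obstruction for the cubic relation in your final paragraph, yet overlook it for the $p$-th power relations. The paper handles this by computing $\Delta(u_{21}^n)$ and $\Delta(u_2^{2n})$ directly in $\cB^d$ (formulas \eqref{eq:coproduct-dual-super-jordan}) and reading off that $u_{21}^p$ and $u_2^{2p}$ are primitive when $\car\ku=p$; the quotient is then a pre-Nichols algebra of $W$ of dimension $\leq 4p^2$, and equality with $\toba(W)$ follows by the dimension count.
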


\pf Since $\cB^d \simeq \cB$ as algebras, the monomials \eqref{eq:monomials:jordan-dual}
with  $(n_1, n_{21}, n_2) \in\I_{0, 1} \times \N_{0} \times \N_{0}$ 
form a linear basis of $\cB^d$. 
It is easy to see that 
$u_1^2$ and $u_{21} u_2  - u_2 u_{21}  + u_1 u_{21}$ are primitive in $T(W)$, hence 
$\cB^d$ is a pre-Nichols algebra of $W$. 
Now $\toba(V)$ and $\toba(W)$ have the same graded dimension by \cite[Lemma 2.6]{g}. Then \ref{item:Jordandual} follows. 
For \ref{item:Jordandual-restricted} we prove by induction that 
\begin{align}\label{eq:coproduct-dual-super-jordan}
\begin{aligned}
\Delta(u_{21}^n) =& \sum_{\ell=0}^{1}\sum_{k=0}^{n-\ell}\binom{n-\ell}{k} n^\ell u_1^\ell 
u_{21}^{n-k-\ell} \ot u_1^\ell u_{21}^k, \\
\Delta(u_2^{2n}) =& \sum_{\ell=0}^1\sum_{k=0}^{n-\ell} \sum_{t=0}^{k} \binom{n-\ell}{k}\binom{k}{t} n^\ell
[k-n+\ell]^{[t]}
u_2^{2(n-k)-\ell}\ot u_1^\ell u_{21}^t u_2^{2(k-t)},
\end{aligned}
\end{align}
for all $n\in\N_0$. 
So $u_{21}^p, u_2^{2p}\in \Pc(\cB^d)$ and $\cB^d/(u_{21}^p, u_2^{2p})$ is a pre-Nichols algebra of $W$; 
hence  $\cB^d/(u_{21}^p, u_2^{2p}) \simeq \toba(W)$ by dimension counting. 
\epf

\subsection{Change of bosonization}\label{sub:change-boso}

Let $L$ be a Hopf algebra and let $\gpuno$ be a group provided with a group homomorphism 
$\gpuno \to \AuH (L)$. Then $L$ is a Hopf algebra in $\yd{\ku \gpuno}$ (with trivial coaction)
and we may consider the smash product $L \rtimes \ku \gpuno \coloneqq L \# \ku \gpuno$.
Let $\xymatrix{L \rtimes \ku \gpuno \ar@<0.5ex>@{->}[r]^{\quad\pi} &  \ku \gpuno \ar@<0.5ex>@{->}[l]^{\quad \iota}}$ be the natural projection and inclusion. We show that under certain conditions, $L \rtimes \ku \gpuno$ can be 
alternatively described as $\scL^{\morgpo} \# \ku \gpuno$ for a genuine braided Hopf algebra 
$\scL^{\morgpo}$ in $\yd{\ku \gpuno}$. 

Namely, suppose that 
$L = R \# U$ where $U$ is a Hopf algebra and $R$ is a Hopf algebra in $\yd{U}$. 
Let $\AubH{U} (R)$ be the group of automorphisms of
Hopf algebras in $\yd{U}$, i.e. preserving multiplication, comultiplication, action and coaction.
There is a morphism of groups 
\begin{align*}
\AubH{U} (R) &\to \AuH(L),& 
\AubH{U} (R) \ni \varsigma &\mapsto \varsigma \otimes \id \in \AuH(L). 
\end{align*}
We fix a group $\gpuno$ and a morphism of groups
$\gpuno \to \AuH (L)$  factorizing through $\AubH{U} (R)$.
Furthermore we assume that $U = U'\otimes \ku \gpdos$ where $U'$ a Hopf algebra and $\gpdos$ is a group.

\begin{prop} \label{prop:change-boso}
\begin{enumerate}[leftmargin=*,label=\rm{(\alph*)}]
\item\label{item:change-boso-a}  Given $\morgpo \in \Hom_{\emph{gps}} \left(\gpdos, Z(\gpuno)\right)$, there exists 
a Hopf algebra $\scL^{\morgpo}$ in $\yd{\ku \gpuno}$ such that  $L \rtimes \ku \gpuno \simeq \scL^{\morgpo} \# \ku \gpuno$.

\medbreak
\item\label{item:change-boso-b} Let  
$\varpi^{\morgpo}: \gpdos \times \gpuno \to \gpuno$ be the morphism of groups given by
$\varpi^{\morgpo} (\gamma, c) = \morgpo(\gamma) c$, $\gamma \in \gpdos$, $c \in \gpuno$,
and let
\begin{align*}
\gpdos^{\morgpo} \coloneqq \ker \varpi^{\morgpo} = \{(\gamma, \morgpo(\gamma)^{-1}): \gamma \in \gpdos\}. 
\end{align*}
\end{enumerate}
\noindent 
Then $\scL^{\morgpo}$ decomposes as $\scL^{\morgpo} \simeq R^{\morgpo} \braibos( U' \otimes \ku \gpdos^{\morgpo})$ (braided bosonization in $\yd{\ku \gpuno}$) where 
$R^{\morgpo}  = R$ as a subalgebra and $U'$-module but 
with $\gpdos^{\morgpo} \simeq \gpdos$ acting by  
\begin{align*}
\gamma \rightharpoonup r  &= \morgpo(\gamma)^{-1} \cdot \left(\gamma \cdot r\right),& 
r \in R, \gamma &\in \gpdos.
\end{align*}

\begin{enumerate}[leftmargin=*, resume,label=\rm{(\alph*)}]
\item\label{item:change-boso-c} Assume that 
\begin{enumerate}[leftmargin=*,label=\rm{(\roman*)}]
\item\label{item:change-boso-c-1} $\gpuno$ and $\gpdos$ are abelian and $\morgpo$ has a section 
$\vartheta \in \Hom_{\emph{gps}} \left(\gpuno, \gpdos\right)$,

\item\label{item:change-boso-c-2} For every $r\in R$ and $c\in \gpuno$, $c \cdot r = \vartheta(c) \cdot r$.
\end{enumerate}
\end{enumerate}
Let $\gptres^{\morgpo} \coloneqq \ker \morgpo \times \{e\}$.  Then the subalgebra 
$R^{\morgpo} \braibos U' \otimes \ku \gptres^{\morgpo}$ has a structure of Hopf algebra denoted $\cL^{\morgpo}$
such that $\ku \gpuno \hookrightarrow \scL^{\morgpo} \twoheadrightarrow \cL^{\morgpo}$ is exact.
Furthermore $R^{\morgpo}$ is a Hopf algebra in $\yd{U' \otimes \ku \gptres^{\morgpo}}$.
\end{prop}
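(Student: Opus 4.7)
The plan is to construct, from the twist $\morgpo$, a new Hopf algebra surjection $\widetilde{\pi}\colon L\rtimes \ku\gpuno \twoheadrightarrow \ku\gpuno$ that splits the canonical inclusion of $\ku\gpuno$, and then to invoke Radford's bosonization theorem to read $\scL^{\morgpo}$ off as the subalgebra of right coinvariants. For (a), using $L = R \# (U'\otimes \ku\gpdos)$, set
\[
\widetilde{\pi}\colon L\rtimes \ku\gpuno \twoheadrightarrow \ku\gpdos \otimes \ku\gpuno \xrightarrow{\mu\circ(\morgpo\otimes \id)} \ku\gpuno,
\]
where the first arrow kills $R^+$ and $(U')^+$ and is the identity on the remaining factors, and $\mu$ is multiplication in $\ku\gpuno$. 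The centrality $\morgpo(\gpdos)\subset Z(\gpuno)$ makes $\mu\circ(\morgpo\otimes \id)$ an algebra, hence Hopf, morphism; so $\widetilde{\pi}$ is a Hopf algebra surjection, and $\widetilde{\pi}\circ \iota = \id$ where $\iota\colon \ku\gpuno\hookrightarrow L\rtimes\ku\gpuno$ is the canonical inclusion. Radford's theorem then yields $L\rtimes\ku\gpuno \simeq \scL^{\morgpo}\#\ku\gpuno$ with $\scL^{\morgpo}\coloneqq (L\rtimes\ku\gpuno)^{\operatorname{co}\widetilde{\pi}}$ a Hopf algebra in $\yd{\ku\gpuno}$.

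For (b), set $\widetilde{\gamma}\coloneqq \gamma\#\morgpo(\gamma)^{-1}$ for $\gamma\in\gpdos$; centrality of $\morgpo(\gamma)^{-1}$ in $\gpuno$ gives $\widetilde{\gamma}\,\widetilde{\gamma'} = \widetilde{\gamma\gamma'}$, and a direct computation shows $\widetilde{\pi}(\widetilde{\gamma})=1$, so $\gamma\mapsto \widetilde{\gamma}$ embeds $\gpdos^{\morgpo}$ into $\scL^{\morgpo}$. Since also $R,U'\subset \ker\widetilde{\pi}$, a dimension count using the linear decomposition $L\rtimes\ku\gpuno = R\otimes U'\otimes \ku\gpdos\otimes \ku\gpuno$ yields $\scL^{\morgpo} = R\otimes U'\otimes \ku\gpdos^{\morgpo}$ as vector spaces. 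Next compute inside $L\rtimes\ku\gpuno$, recalling that $\gpuno$ acts trivially on $U$:
\[
\widetilde{\gamma}\, r\, \widetilde{\gamma}^{-1} = \gamma\cdot_U(\morgpo(\gamma)^{-1}\cdot r), \qquad r\in R,\ \gamma\in \gpdos;
\]
since $\gpuno$ acts on $R$ by automorphisms of Hopf algebras in $\yd{U}$, the $U$- and $\gpuno$-actions on $R$ commute, yielding the advertised action $\gamma\rightharpoonup r = \morgpo(\gamma)^{-1}\cdot(\gamma\cdot r)$ of $\gpdos^{\morgpo}$ on $R^{\morgpo}$. Analogous direct computations show $[U',\widetilde{\gpdos}] = 0$ and recover the original bosonization relations between $R$ and $U'$, identifying $\scL^{\morgpo}\simeq R^{\morgpo}\braibos (U'\otimes \ku\gpdos^{\morgpo})$.

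For (c), the section $\vartheta$ together with abelianness gives an internal direct product $\gpdos = \ker\morgpo\times \vartheta(\gpuno)$, hence $\gpdos^{\morgpo} = \gptres^{\morgpo}\times \widetilde{\vartheta(\gpuno)}$. Define $\iota'\colon \ku\gpuno\to \scL^{\morgpo}$ by $c\mapsto \widetilde{\vartheta(c)} = \vartheta(c)\#c^{-1}$; it is a Hopf algebra embedding because $\vartheta$ is a group homomorphism, and hypothesis (ii) combined with $\morgpo\circ\vartheta = \id$ forces the adjoint action of $\iota'(c)$ on $R^{\morgpo}$ to be trivial (the $\vartheta(c)\cdot_U$ and $c^{-1}\cdot = \vartheta(c)^{-1}\cdot_U$ contributions cancel), so $\iota'(\ku\gpuno)$ is central in $\scL^{\morgpo}$. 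The quotient $\cL^{\morgpo}\coloneqq \scL^{\morgpo}/\scL^{\morgpo}\iota'(\ku\gpuno)^+$ inherits a Hopf algebra structure and, using the group splitting, is precisely $R^{\morgpo}\braibos U'\otimes \ku\gptres^{\morgpo}$. Centrality guarantees that $\iota'(\ku\gpuno)$ is stable under the left adjoint action of $\scL^{\morgpo}$, so Remark~\ref{remark-exact-sequence-hopf} supplies condition~\ref{suc-exacta-4} automatically, giving exactness of $\ku\gpuno\hookrightarrow \scL^{\morgpo}\twoheadrightarrow \cL^{\morgpo}$. The main obstacle throughout is bookkeeping the several intertwined actions and coactions so as to ensure that the twisted $\gpdos^{\morgpo}$-action on $R$ still defines a Hopf algebra in $\yd{U'\otimes \ku\gpdos^{\morgpo}}$; this rests delicately on the commutation between the $U$- and $\ku\gpuno$-actions on $R$, itself a consequence of the factorization $\gpuno\to \AubH{U}(R)$.
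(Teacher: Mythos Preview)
Your argument follows the same route as the paper: define the twisted projection $\widetilde{\pi}$ (the paper's $\pi^{\morgpo}$), apply Radford to obtain $\scL^{\morgpo}$ as the coinvariants, then split off $U'\otimes\ku\gpdos^{\morgpo}$ and read off the $\gpdos^{\morgpo}$-action on $R^{\morgpo}$; for (c) use the internal direct product $\gpdos^{\morgpo}=\gptres^{\morgpo}\times\gpuno^{\vartheta}$ and centrality of $\gpuno^{\vartheta}$. The only methodological difference is in (b): the paper obtains the braided bosonization $\scL^{\morgpo}\simeq R^{\morgpo}\braibos(U'\otimes\ku\gpdos^{\morgpo})$ by a second Radford-type splitting $p^{\morgpo}\colon\scL^{\morgpo}\to U'\otimes\ku\gpdos^{\morgpo}$, $\imath^{\morgpo}$ with $p^{\morgpo}\imath^{\morgpo}=\id$, whereas you verify the commutation relations directly; both are fine.

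Two small points to tighten. First, the phrase ``dimension count'' is not quite right, since in the paper's applications $\gpdos$ may be infinite (e.g.\ $\gpdos=\Gamma\simeq\Z$); the argument you want is that multiplication gives a linear isomorphism $(R\otimes U'\otimes\ku\gpdos^{\morgpo})\otimes\ku\gpuno\to L\rtimes\ku\gpuno$ (rewrite $\gamma\#c=\widetilde{\gamma}\cdot(1\#\morgpo(\gamma)c)$), and comparison with the Radford isomorphism $\scL^{\morgpo}\otimes\ku\gpuno\simeq L\rtimes\ku\gpuno$ forces $\scL^{\morgpo}=R\otimes U'\otimes\ku\gpdos^{\morgpo}$. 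Second, ``$R,U'\subset\ker\widetilde{\pi}$'' is not what you mean (they contain $1$); you mean that $R$ and $U'$ lie in the $\widetilde{\pi}$-coinvariants.
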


\pf \ref{item:change-boso-a}: 
Let $\pi^{\morgpo}: L \rtimes \ku \gpuno \to \ku \gpuno$ be  the Hopf algebra projection given by
\begin{align*}
\pi^{\morgpo} (r \# u \ot \gamma \otimes c) &= \varepsilon(r)\varepsilon(u) \, \morgpo(\gamma) c,& 
r \in R,\ u \in U', \  &\gamma \in \gpdos, \ c\in  \gpuno. 
\end{align*}
Then $\pi^{\morgpo}\iota = \id_{\ku \gpuno}$ and the claim follows from \cite{Rad-libro} taking the subalgebra
\begin{align*}
\scL^{\morgpo} \coloneqq \left(L \rtimes \ku \gpuno\right)^{\text{co }\pi^{\morgpo}} = R \cdot U' \ot \ku \gpdos^{\morgpo}.
\end{align*} 

\ref{item:change-boso-b} The Hopf algebra maps $\xymatrix{\scL^{\morgpo} \ar@<0.5ex>@{->}[r]^{p^{\morgpo}} &  U' \otimes \ku  \gpdos^{\morgpo} \ar@<0.5ex>@{->}[l]^{\imath^{\morgpo}}}$ given  by
\begin{align*}
&\begin{aligned}
p^{\morgpo} (r \# u \ot \gamma \morgpo(\gamma)^{-1}) &= \varepsilon(r)\varepsilon(u) (1\# \gamma \morgpo(\gamma)^{-1}),
\\
\imath^{\morgpo} (u \ot \gamma \morgpo(\gamma)^{-1}) &= 1 \# u \ot \gamma \morgpo(\gamma)^{-1},
\end{aligned}
& r \in R,\ u \in U', \  &\gamma \in \gpdos, 
\end{align*} 
satisfy $p^{\morgpo}\imath^{\morgpo} = \id_{\ku \gpdos^{\morgpo}}$. Hence 
$R^{\morgpo} \coloneqq \left(\scL^{\morgpo} \right)^{\text{co } p^{\morgpo}}$ and the claim follows.

\ref{item:change-boso-c} By \ref{item:change-boso-c-1},  
$\gpdos^{\morgpo} \simeq \gptres^{\morgpo} \times\gpuno^{\vartheta}$ where
$\gpuno^{\vartheta} = \{(\vartheta(c)^{-1}, c): c\in \gpuno \}$ and using also \ref{item:change-boso-c-1},
$\gpuno^{\vartheta}$ is central in $\scL^{\morgpo}$.
Now the multiplication provides a linear isomorphism
\begin{align*}
\scL^{\morgpo} \simeq R^{\morgpo} \braibos( U' \otimes \ku \gpdos^{\morgpo}) 
\simeq R^{\morgpo} \braibos\big( U' \otimes (\ku \gptres^{\morgpo} \times \ku \gpuno^{\vartheta})\big)
\simeq\left( R^{\morgpo} \braibos U' \otimes \ku \gptres^{\morgpo}\right) \otimes \ku \gpuno^{\vartheta}.
\end{align*}
Then the quotient $\cL^{\morgpo} \simeq \scL^{\morgpo}/\ku (\gpuno^{\vartheta})^+ \scL^{\morgpo}$ is isomorphic as an algebra
to the subalgebra $R^{\morgpo} \braibos U' \otimes \ku \gptres^{\morgpo}$. The rest is clear.
\epf

The situation that we have in mind is when $\gpuno = C_N$ and $R = \oplus_{i \in C_N} R_i$ 
has a $C_N$-grading of Hopf algebras in $\yd{U}$. For instance,
let $\cV \in \yd{\ku \gpdos}$ be $C_N$-graded and take $R = \toba(\cV)$ (or any pre-Nichols algebras whose defining relations are $C_N$-homogeneous).
We fix $\omega \in \G_N$ (what amounts in this case to fix a quasi-triangular structure 
on $\ku C_{N}$) and let act $C_N$ on $R_i$ by $\omega^i$.

We discuss two specific examples of interest in this paper.
In the rest of this Section $\car \ku \neq 2$, in particular $\car \ku = 0$ is allowed. 
Recall that $\polring = \ku[\zeta]$ with $\zeta$ primitive and $\Gamma \simeq \Z$.
We fix generators
\begin{align}\label{eq:generators-groups}
C_2 &= \langle \genpi \rangle, & \Gamma &= \langle \gen \rangle.
\end{align}

\begin{example}
We realize  $(V, c)$, with braiding \eqref{eq:braidingsuperjordan}, in $\customyd{\ku\Gamma}$ by
\begin{align}\label{eq:realization-yd-Gamma}
\gen\rightharpoonup \xb_1 &= -\xb_1, & \gen \rightharpoonup \xb_2 &= -\xb_2 + \xb_1, & \delta(\xb_i) &= \gen \ot \xb_i, 
& i &=1,2.
\end{align}
Since the ideal of $T(V)$ generated by the relations \eqref{eq:def-super-jordan} belongs to $\customyd{\ku\Gamma}$,
$\cB$ is a Hopf algebra in $\customyd{\ku\Gamma}$ hence we have 
\begin{align}\label{eq:def-hopfuno}
\hopfuno &\coloneqq \cB\#\ku\Gamma.
\end{align}
Now thinking on $V$ as purely odd and taking $\gpuno = C_2$, $\gpdos = \Gamma$, $L = \hopfuno$,
$U = \ku \Gamma$ and $\morgpo:\Gamma \to C_2$ the standard projection, we have
$\hopfuno \rtimes \ku C_2 \simeq \widetilde{\cH} \# \ku C_2$ where
\begin{align}\label{eq:def-cH}
\widetilde{\cH} \simeq \cB \supbos \ku\Gamma,
\end{align}
that corresponds to the realization of $V$ in $\customyds{\ku\Gamma}$ by
\begin{align}\label{eq:realization-yds-Gamma}
&\gen\rightharpoonup \xb_1 = \xb_1, \ \gen \rightharpoonup \xb_2 = \xb_2 - \xb_1, \ \delta(\xb_i) = \gen \ot \xb_i, \
|x_i| = 1,& i &\in \I_2.
\end{align}
Since $\morgpo$ admits no section, there is no further decomposition.
\end{example}

\begin{example}
Analogously we realize $(W, c_{W})$, cf. \eqref{eq:braidingsuperjordandual}, in $\customyd{\polring\ot\ku C_2}$ by
\begin{align} \label{eq:realization-yd-polring}
&\begin{aligned}
\zeta\rightharpoonup \ub_i &= \ub_i,& \genpi\rightharpoonup \ub_i, &= -\ub_i, \\
\delta(\ub_1) &= \genpi \ot \ub_1, & \delta(\ub_2) &= \genpi\ot \ub_2 - \zeta\genpi\ot \ub_1,
\end{aligned}
& i &=1,2.
\end{align}
As before $\cB^d$ is a Hopf algebra in $\customyd{\polring\ot\ku C_2}$ hence we have 
\begin{align}\label{eq:def-hopfdos}
\hopfdos &\coloneqq \cB^d \#(\polring\ot\ku C_2).
\end{align}
Now thinking on $W$ as a purely odd super vector space and taking $\gpuno = C_2\simeq \gpdos$, $L = \hopfdos$,
$U = \polring\ot\ku C_2$ and $\morgpo = \id_{C_2}$, we have
$\hopfdos\rtimes \ku C_2 \simeq \widetilde{\scK} \# \ku C_2$, where
$\widetilde{\scK} \simeq \cB^d \supbos ( \polring \otimes \ku C_2)$;
furthermore $\ku C_2$ is central and
\begin{align}\label{eq:def-cK}
\widetilde{\cK} 
&\coloneqq \widetilde{\scK} \hopfq \ku C_2  \simeq \cB^d \supbos \polring.
\end{align}
Here 
the last corresponds to the realization of $W$ in $\customyds{\polring}$ by
\begin{align}\label{eq:realization-yds-polring}
&\begin{aligned}
\zeta\rightharpoonup\ub_i &= \ub_i,& |u_i| &= 1,& i &=1,2,
\\
\delta(\ub_1) &= 1\ot \ub_1, & \delta(\ub_2) &= 1\ot \ub_2 - \zeta\ot \ub_1.&&
\end{aligned}\end{align}

\end{example}

The superalgebras $\widetilde{\cH}$ and $\widetilde{\cK}$ admit a suitable Hopf pairing that allows an alternative 
characterization of the Hopf superalgebra $\widetilde{\cD}$, see Remark \ref{rem:widetildeD-doble}.

\section{The double of the super Jordan plane}\label{sec:double-superjordan}

\subsection{The  definition} 
We define the  Drinfeld double $\hopfdoble$ of
$\hopfuno = \cB\#\ku\Gamma$, see \eqref{eq:def-hopfuno}, with respect to a suitable pairing with 
$\hopfdos = \cB^d\#(\polring \otimes \ku C_{2})$, see \eqref{eq:def-hopfdos}.
Then we show that there exists a Hopf superalgebra $\widetilde{\cD}$ such that $\hopfdoble \simeq \widetilde{\cD} \# \ku C_2$.
In this way  $\widetilde{\cD}$ is fundamental to the study of the Drinfeld double $\hopfdoble$; among other characteristics, it bears a triangular decomposition \eqref{eq:Dtilde-triang-decomp}. 

\medbreak
To start with the Hopf algebras $\hopfuno$ and $\hopfdos$ have PBW-basis, denoted by $\mathsf{B}_{\hopfuno}$, 
$\mathsf{B}_{\hopfdos}$, given by the ordered monomials \eqref{eq:monomials:jordan}
or \eqref{eq:monomials:jordan-dual} multiplied accordingly by elements of the groups $\Gamma$ and $C_2$,
or powers of $\zeta$.
The comultiplications of $\hopfuno$, respectively $\hopfdos$ satisfy $\xb_1, \xb_2 \in \Pc_{\gen, 1}(\hopfuno)$,
$\ub_1  \in \Pc_{\genpi, 1}(\hopfdos)$ and
\begin{align}\label{eq:comultiplication-Hb1} 
\Delta(\ub_2) &= \ub_2\ot 1 + \genpi\ot \ub_2 - \genpi\zeta\ot \ub_1.
\end{align}

\begin{lemma} 
The algebra $\hopfuno$ is presented by generators $\xb_1,\xb_2,\gen, \gen^{-1}$ and relations
\eqref{eq:def-super-jordan},
\begin{align}\label{eq:Relaciones-Hb}
\gen \xb_1 &= -\xb_1\gen , & \gen \xb_2 &= -\xb_2 \gen + \xb_1\gen, &
\gen ^{\pm 1} \gen ^{\mp 1} &= 1.
\end{align}
Also, $\hopfdos$ is presented by generators $ \ub_1, \ub_2,\zeta, \epsilon$ and relations
\eqref{eq:def-super-jordan} (in the $u_i$'s), 
\begin{align}
\label{eq:relation-epsilon}
\epsilon ^2 & = 1, & \epsilon \zeta & = \zeta\epsilon,
\\
\label{eq:Relaciones-Kb1}
\epsilon u_1 & = -u_1 \epsilon, & \epsilon u_2 & = -u_2 \epsilon,
\\
\label{eq:Relaciones-Kb}
\zeta \ub_1  &= \ub_1\zeta + \ub_1, &  \zeta \ub_2  &= \ub_2\zeta + \ub_2. \hspace{50pt}\qed
\end{align}
\end{lemma}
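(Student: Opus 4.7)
The plan is to deduce both presentations uniformly from the universal property of the smash product. For each assertion I would define the algebra $\widetilde{A}$ given by the claimed generators and relations, construct a surjective algebra map $\widetilde{A}\twoheadrightarrow \hopfuno$ (respectively $\widetilde{A}\twoheadrightarrow \hopfdos$) by verifying that each listed relation holds in the bosonization, and then show injectivity by reducing every element of $\widetilde{A}$ to a normal form matching the known PBW basis of the target.

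Checking that the relations hold is the routine part. The relations of $\cB$ (respectively $\cB^d$) hold because the braided Hopf algebra embeds in its bosonization. The group and polynomial-part relations $\gen^{\pm 1}\gen^{\mp 1} = 1$ and \eqref{eq:relation-epsilon} come from the Hopf subalgebra $\ku\Gamma$ and from the tensor product Hopf algebra $\polring\otimes \ku C_2$ with $\epsilon$ of order two. The cross relations \eqref{eq:Relaciones-Hb}, \eqref{eq:Relaciones-Kb1}, and \eqref{eq:Relaciones-Kb} are all instances of the general smash-product identity $h\cdot v = \sum (h_{(1)}\rightharpoonup v)\,h_{(2)}$: for the group-likes $\gen$ and $\epsilon$ one reads off the action from \eqref{eq:realization-yd-Gamma} and \eqref{eq:realization-yd-polring}; for the primitive $\zeta$ with $\zeta\rightharpoonup u_i = u_i$ one obtains $\zeta u_i = u_i + u_i \zeta$, which is precisely \eqref{eq:Relaciones-Kb}.

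For the injectivity step, the cross relations allow me to push every occurrence of $\gen^{\pm 1}$, $\epsilon$ and $\zeta$ past any $x_i$ or $u_i$ to the right. In $\widetilde{\hopfuno}$ this reduces every word to an ordered monomial $x_1^{n_1} x_{21}^{n_{21}} x_2^{n_2}\gen^n$ with $(n_1,n_{21},n_2)\in \I_{0,1}\times\N_0\times\N_0$ and $n\in\Z$; these form a PBW basis of $\hopfuno$ obtained from \eqref{eq:monomials:jordan}. In $\widetilde{\hopfdos}$ one similarly reaches the monomials $u_1^{n_1} u_{21}^{n_{21}} u_2^{n_2}\zeta^n\epsilon^k$, a basis of $\hopfdos$. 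Since these spanning sets on $\widetilde{A}$ map to a linearly independent set on the target, both surjections must be isomorphisms.

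The main (and only) point requiring careful bookkeeping is the commutation rule between $\zeta$ and an arbitrary monomial in the $u_i$'s inside $\widetilde{\hopfdos}$, because $\zeta$ does not act by an algebra automorphism of $\cB^d$. Since $\cB^d$ is a Hopf algebra in $\customyd{\polring\otimes \ku C_2}$, the primitive element $\zeta$ acts by a derivation, and from $\zeta\rightharpoonup u_i = u_i$ one obtains by induction $\zeta\rightharpoonup w = \ell\, w$ for a monomial $w$ of length $\ell$ in the $u_i$'s, hence $\zeta w = w\zeta + \ell\, w$ in $\widetilde{\hopfdos}$. This terminating rewrite rule is exactly what is needed for the normal-form reduction to be exhaustive, thereby completing the dimension match and the proof.
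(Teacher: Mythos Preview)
Your argument is correct and is exactly the standard one the paper has in mind: the lemma is stated with an immediate \qed\ and no proof, so the authors treat it as routine. Your surjection-plus-PBW-normal-form strategy, including the observation that the primitive $\zeta$ acts as a derivation so that $\zeta w = w\zeta + \ell w$ on a length-$\ell$ monomial, is precisely the computation being left to the reader.
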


We define the Drinfeld double of $\hopfuno$ as  $\hopfdoble \coloneqq \hopfuno \bowtie \hopfdos^{\operatorname{op}}$, see Remark \ref{rem:double},
with respect to the unique skew-pairing $\tau: \hopfuno \otimes \hopfdos^{\operatorname{op}} \to \ku$
such that
\begin{align*}
\tau(\xb_1 \ot u_1) &= 0, & \tau(\xb_1\ot u_2) &= 1, & \tau(\xb_1\ot\zeta) &=0, & \tau(\xb_1\ot\genpi) &=0,
\\
\tau(\xb_2\ot \ub_1) &= 1, & \tau(\xb_2\ot \ub_2) &= 0, &  \tau(\xb_2\ot\zeta) &=0, & \tau(\xb_2\ot\genpi) &=0, 
\\
\tau(\gen ^{\pm 1}\ot \ub_1) &= 0, & \tau(\gen ^{\pm 1}\ot \ub_2) &=0,& \tau(\gen ^{\pm 1}\ot\zeta) &= \pm 1,
 & \tau(\gen ^{\pm 1}\ot\genpi) &= -1. 
\end{align*}

\begin{prop}\label{prop:presentation-tilde-D-0}
The algebra $\hopfdoble$ is presented by generators $\xb_1$, $\xb_2$, $\zeta$, $\gen$, $\genpi$, $\ub_1$, $\ub_2$ with relations \eqref{eq:def-super-jordan} (in the $ \xb_i$'s and in the $\ub_i$'s), \eqref{eq:Relaciones-Hb},
\eqref{eq:relation-epsilon}, \eqref{eq:Relaciones-Kb1},
\begin{align}
\label{eq:Relaciones-Kb-op}
\ub_1 \zeta  &= \zeta\ub_1 + \ub_1, & \ub_2\zeta &= \zeta\ub_2 + \ub_2,
\\
\label{eq:Relaciones-D0-part2}
\genpi \gen &= \gen \genpi,& \zeta \gen &= \gen \zeta,
\\
\label{eq:genpi-xb}
\genpi \xb_1 &= - \xb_1\genpi, &\genpi \xb_2 &= - \xb_2\genpi, 
\\ \label{eq:zeta-xi}
\zeta \xb_1 &= \xb_1 \zeta + \xb_1, & \zeta \xb_2 &= \xb_2 \zeta + \xb_2,
\\
\ub_2 \gen &= -\gen \ub_2 + \gen \ub_1, & \ub_1 \gen &= -\gen \ub_1,
\\
\label{eq:Relaciones-D0-part1}
\ub_1 \xb_1 &= -\xb_1 \ub_1, &
\ub_1 \xb_2 &= -\xb_2 \ub_1 +(1-\gen\genpi), 
\\  \ub_2 \xb_2 &= -\xb_2 \ub_2 +\gen\genpi \zeta + \xb_2 \ub_1, 
& \ub_2 \xb_1 &= -\xb_1 \ub_2 +(1-\gen\genpi) + \xb_1 \ub_1,
\end{align}

The family $\mathsf{B}_{\hopfdoble}$ consisting of the monomials
\begin{align}\label{eq:base_D_0}
\begin{aligned}
\xb_1^{n_1} \xb_{21}^{n_{21}} \xb_2^{n_2} \gen^n \zeta^m \ub_1^{m_1} \ub_{21}^{m_{21}} \ub_2^{m_2} \genpi^k 
\end{aligned}
\end{align}
with  $(k,n_1,m_1,n,n_2,n_{21},m,m_2,m_{21})\in \I_{0,1}^3\times \Z \times \N_0^5$ is a basis of $\hopfdoble$.
\end{prop}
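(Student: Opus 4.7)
The plan is to break the proof into four parts: first, check that $\tau$ extends to a well-defined skew-pairing between $\hopfuno$ and $\hopfdos^{\operatorname{op}}$; second, derive the cross relations between the generators of the two factors by applying the double formula of Remark~\ref{rem:double}; third, use the triangular decomposition to obtain the PBW basis; fourth, compare dimensions to identify the algebra presented by generators and relations with $\hopfdoble$.

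For the first step I would verify the skew-pairing axioms \eqref{eq:skew-pairing} on generators and extend multiplicatively and comultiplicatively to all of $\hopfuno\otimes\hopfdos^{\operatorname{op}}$. Since $\xb_1, \xb_2$ are $(\gen, 1)$-primitive, $\ub_1$ is $(\genpi, 1)$-primitive, $\zeta$ is primitive, $\genpi$ is group-like, and $\ub_2$ has the three-term coproduct \eqref{eq:comultiplication-Hb1}, the pairing of any two monomials reduces to a finite sum of the values specified. What needs to be checked is that $\tau$ vanishes on each defining relation of $\hopfuno$ in the first slot (for instance on $\xb_1^2$ and on the super Jordan relation) and on those of $\hopfdos^{\operatorname{op}}$ in the second slot. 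This step is routine but unavoidable.

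For the second step, the internal relations of $\hopfuno$ and $\hopfdos^{\operatorname{op}}$ transfer unchanged to $\hopfdoble$. The cross ones follow from
\[
(1 \bowtie r)(a \bowtie 1) = \langle r_{(1)},a_{(1)}\rangle \langle r_{(3)}, \Ss(a_{(3)}) \rangle \, (a_{(2)}\bowtie r_{(2)}),
\]
evaluated for every pair $(a, r)$ with $a \in \{\gen^{\pm 1}, \xb_1, \xb_2\}$ and $r \in \{\genpi, \zeta, \ub_1, \ub_2\}$. For group-like or primitive pairs this yields \eqref{eq:Relaciones-D0-part2}--\eqref{eq:zeta-xi} at once, since the coproducts have only two summands. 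The richer relations in \eqref{eq:Relaciones-D0-part1}, involving $1-\gen\genpi$ and additional $\ub_1$-terms, arise because the three-term coproduct of $\ub_2$ (and the corresponding correction in $\Delta(\xb_2)$) forces both $\tau$-factors to contribute simultaneously; the antipode then produces the group-like prefactor $\gen\genpi$, while the cross piece $-\genpi\zeta \ot \ub_1$ in $\Delta(\ub_2)$ accounts for the extra $\xb_i\ub_1$ summands.

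For the third step, the construction as a twisted tensor product yields $\hopfdoble \simeq \hopfuno \ot \hopfdos^{\operatorname{op}}$ as coalgebras, hence as vector spaces. Multiplying the PBW basis $\mathsf{B}_{\hopfuno}$ of $\hopfuno$ by $\mathsf{B}_{\hopfdos}$ of $\hopfdos^{\operatorname{op}}$ in the stated order gives the family \eqref{eq:base_D_0}, which is therefore a basis of $\hopfdoble$. Finally, letting $\widetilde{A}$ denote the algebra presented by the listed generators and relations: Steps 1--2 provide a surjection $\widetilde{A} \twoheadrightarrow \hopfdoble$, while the cross relations allow one to rewrite any word in the generators of $\widetilde{A}$ in the normal form \eqref{eq:base_D_0}. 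Hence $\widetilde{A}$ is spanned by these monomials, and matching dimensions forces the surjection to be an isomorphism. The hardest part will be Step 2 for the pairs $(\xb_2, \ub_2)$, $(\xb_1, \ub_2)$ and $(\xb_2, \ub_1)$, where the nonprimitive terms in $\Delta(\ub_2)$ and $\Delta(\xb_2)$ interact with the antipode; keeping track of signs and of the group-like prefactors $\gen\genpi$ is where arithmetic errors are most likely. Once those computations are correct, the PBW basis and the presentation follow essentially formally.
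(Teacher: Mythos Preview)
Your proposal is correct and follows essentially the same route as the paper: build the surjection from the abstractly presented algebra $\widetilde{A}$ onto $\hopfdoble$, observe that the monomials \eqref{eq:base_D_0} form a basis of $\hopfdoble$ by the very construction of the double as a twisted tensor product, and then check that the same monomials span $\widetilde{A}$. Two small points: first, the phrase ``matching dimensions'' is imprecise here since both algebras are infinite-dimensional---the actual argument is that your spanning set of $\widetilde{A}$ is sent by the surjection to a linearly independent set, hence is itself a basis; second, the rewriting step is not quite automatic from the listed relations alone, because the normal form involves $\xb_{21}$ and $\ub_{21}$ rather than just the original generators, and the paper handles this by enlarging the ordered generating set to $S = \{\xb_1, \xb_{21}, \xb_2, \gen, \zeta, \ub_1, \ub_{21}, \ub_2, \genpi\}$ and explicitly deriving the missing commutations (e.g.\ $\ub_{21}\xb_2$, $\ub_2\xb_{21}$, $\genpi\xb_{21}$), so that every out-of-order product in $S$ becomes an ordered sum.
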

\begin{proof}
Let $A$ be the algebra presented as in the statement;
then $A\twoheadrightarrow \hopfdoble$. As $\mathsf{B}_{\hopfdoble}$ is a basis of $ \hopfdoble$ by construction, the corresponding monomials are linearly independent in $A$. 
Let $S= \{\xb_1,\xb_{21},\xb_2,\gen,\zeta,\ub_1,\ub_{21},\ub_2, \genpi\}$ with $\xb_{21}$ and $\ub_{21}$  as before,
ordered by $\xb_1<\xb_{21}<\xb_2<\gen<\zeta<\ub_1<\ub_{21}<\ub_2<\genpi$. 
From the defining relations we have
\begin{align*}
&\ub_{21} \xb_1 = \xb_1 \ub_{21},  
\quad \genpi \ub_{21} =  \ub_{21}\genpi, \quad \ub_1 \xb_{21} =  \xb_{21} \ub_1,
\quad \genpi \xb_{21} = \xb_{21} \genpi,
\\
&\ub_{21} \xb_2 = \xb_2 \ub_{21} + (\gen\genpi + 1) \ub_1,  \quad 
\zeta x_{21} = \xb_{21} \zeta + 2 \xb_{21}, 
\\ &\ub_2 \xb_{21} = \xb_{21} \ub_2 - 2 \xb_{21} \ub_1 + \xb_1(\gen\genpi +1).
\end{align*}
If $a > b\in S$, then $ab$ is a linear combination of monomials $c_1\cdots c_s$ with $c_1\leq c_2\leq\cdots\leq c_s\in S$. Thus  the monomials \eqref{eq:base_D_0} 
generate $A$ and  $A\simeq \hopfdoble$. 
\end{proof}

Let $\gentilde\coloneqq \gen\genpi$ and $\widetilde{\cD} \coloneqq \ku \langle \xb_1, \xb_2, \ub_1, \ub_2, \gentilde, \zeta\rangle \hookrightarrow \hopfdoble$.
\begin{prop}\label{prop:Db}	
\begin{enumerate}[leftmargin=*,label=\rm{(\roman*)}] 
\item\label{prop:Db-1} A basis of $\widetilde{\cD}$ is given by the family $\mathsf{B}$ consisting of monomials 
\begin{align*}
&\xb_1^{n_1} \xb_{21}^{n_{21}} \xb_2^{n_2} \gentilde^n \zeta^m \ub_1^{m_1} \ub_{21}^{m_{21}} \ub_2^{m_2},
&(n_1,m_1,n,n_2,n_{21},m,m_2,m_{21})\in \I_{0,1}^2\times \Z \times \N_0^5.
\end{align*}

\medbreak
\item\label{prop:Db-4} $\widetilde{\cD}$ has a triangular decomposition i.e. a linear isomorphism induced by multiplication  
\begin{align}\label{eq:Dtilde-triang-decomp}
\widetilde{\cD} \simeq \cB \ot (\ku\Gamma\ot R) \ot (\cB^d)^{\operatorname{op}}.
\end{align}

\medbreak
\item\label{prop:Db-2} The algebra  $\widetilde{\cD}$ is presented by generators $\xb_1$, $\xb_2$, $\ub_1$, $\ub_2$, $\gentilde^{\pm1}$, $\zeta$ with defining relations \eqref{eq:def-super-jordan} (in the $x_i$'s and the $u_i$'s), \eqref{eq:Relaciones-Kb-op}, \eqref{eq:zeta-xi},
\eqref{eq:Relaciones-D0-part1} (with $\gentilde$ instead of $\gen \genpi$) and
\begin{align*}
\gentilde ^{\pm 1} \gentilde ^{\mp 1} &= 1, &\zeta \gentilde &= \gentilde \zeta, &
\gentilde \xb_1 &= \xb_1\gentilde, \\ 
\gentilde \xb_2 &= \xb_2 \gentilde - \xb_1\gentilde, &
\ub_2 \gentilde &= \gentilde \ub_2 - \gentilde \ub_1, & \ub_1 \gentilde &= \gentilde \ub_1.
\end{align*} 

\medbreak
\item\label{prop:Db-3}  $\widetilde{\cD}\#\ku C_2\simeq \hopfdoble$ as Hopf algebras;
here $\widetilde{\cD}$ is a Hopf superalgebra with co\-multiplication given by
$\gentilde\in G(\widetilde{\cD}), \, \zeta, \ub_1\in\Pc(\widetilde{\cD}), \,
\xb_1, \xb_2 \in \Pc_{\gentilde, 1}(\widetilde{\cD})$ and
\begin{align}
\label{eq:comultiplication-Dtilde}
\Delta_{\widetilde{\cD}}(\ub_2) &= \ub_2\ot 1 + 1\ot \ub_2 - \zeta\ot \ub_1,
\end{align}  

\end{enumerate}
\end{prop}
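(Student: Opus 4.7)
The plan is to exploit the change of variable $\gentilde = \gen\genpi$: this new generator commutes with $\genpi$, whereas $\xb_i, \ub_i$ anti-commute with $\genpi$. The parity assignment $|\xb_i| = |\ub_i| = 1$, $|\gentilde| = |\zeta| = 0$ is thus already visible inside $\hopfdoble$, and $\widetilde{\cD}$ will play the role of the super part, with $\genpi$ parameterising the $C_2$-grading. Parts \textbf{(i)} and \textbf{(ii)} will follow by rewriting the basis $\mathsf{B}_{\hopfdoble}$ of Proposition \ref{prop:presentation-tilde-D-0} in terms of $\gentilde$.

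Concretely, I would write $\gen^n = \gentilde^n \genpi^n$ in each element of $\mathsf{B}_{\hopfdoble}$ and slide $\genpi^n$ to the right past $\zeta^m$ (commutes) and past $\ub_1^{m_1}\ub_{21}^{m_{21}}\ub_2^{m_2}$, picking up a sign $(-1)^{n(m_1 + m_2)}$ since $\genpi$ anti-commutes with $\ub_1, \ub_2$ but commutes with $\ub_{21}$. This yields a sign-bijection between $\mathsf{B}_{\hopfdoble}$ and $\mathsf{B} \sqcup \mathsf{B}\,\genpi$, so the latter is a basis of $\hopfdoble$; in particular $\mathsf{B}$ is linearly independent and contained in $\widetilde{\cD}$. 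A routine ordering argument, reducing products of generators to the PBW form using the relations of $\hopfdoble$, shows the reverse inclusion, establishing \textbf{(i)}. The triangular decomposition \textbf{(ii)} then reads off the factorization of the PBW monomials.

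For \textbf{(iii)}, I would derive the new commutation relations involving $\gentilde$ from those of $\hopfdoble$, for example
\begin{align*}
\gentilde\xb_1 &= \gen\genpi\xb_1 = -\gen\xb_1\genpi = \xb_1\gen\genpi = \xb_1\gentilde,
\end{align*}
and analogously for $\gentilde\xb_2$, $\ub_i\gentilde$, together with $\zeta\gentilde = \gentilde\zeta$. Letting $A$ be the abstract algebra presented as in the statement, these computations give a surjection $A \twoheadrightarrow \widetilde{\cD}$. A diamond-style reduction argument (analogous to the last paragraph of the proof of Proposition \ref{prop:presentation-tilde-D-0}) shows that every word in the generators of $A$ rewrites as a linear combination of elements of $\mathsf{B}$; since those are linearly independent in $\widetilde{\cD}$ by Step 1, the surjection is an isomorphism.

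For \textbf{(iv)}, I would endow $\widetilde{\cD}$ (in the presentation from (iii)) with a Hopf superalgebra structure by assigning parities as above and defining $\Delta$ on generators via the stated formulas. The key verification is that $\Delta$ respects the defining relations, using the super sign convention $(a\ot b)(c\ot d) = (-1)^{|b||c|}(ac\ot bd)$; the counit and antipode axioms are then routine. Once this is done, the map $\widetilde{\cD}\#\ku C_2 \to \hopfdoble$ sending each generator to the same-named element of $\hopfdoble$ and $\genpi \mapsto \genpi$ (so that $\gentilde \mapsto \gen\genpi$) is a Hopf algebra morphism, and Step 1 identifies $\mathsf{B} \sqcup \mathsf{B}\,\genpi$ as a common basis of the two sides, forcing the morphism to be an isomorphism. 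I expect the main obstacle to be this well-definedness check for $\Delta$ in (iv), especially for the mixed relations such as $\ub_2\xb_2 = -\xb_2\ub_2 + \gentilde\zeta + \xb_2\ub_1$, which require combining \eqref{eq:comultiplication-Dtilde} with the super sign rule and careful bookkeeping.
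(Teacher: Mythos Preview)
Your treatment of parts (i)--(iii) matches the paper's argument essentially line for line: rewrite the PBW basis of $\hopfdoble$ in terms of $\gentilde$, use the ordering/reduction argument for spanning, and compare the abstract presentation $A$ with the image $\widetilde{\cD}$ via the PBW monomials.

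Where you diverge from the paper is in part (iv). You propose to \emph{define} the Hopf superalgebra structure on $\widetilde{\cD}$ by specifying $\Delta$ on generators, then verify by hand that $\Delta$ respects every defining relation, and finally build the map $\widetilde{\cD}\#\ku C_2 \to \hopfdoble$ and check it is a Hopf algebra isomorphism. This works, but the relation checks (especially the mixed ones like $\ub_2\xb_2 = -\xb_2\ub_2 + \gentilde\zeta + \xb_2\ub_1$) are exactly the tedious bookkeeping you flag, and the compatibility of the bosonized comultiplication with that of $\hopfdoble$ must also be checked generator by generator. The paper instead uses Radford's projection theorem: it produces a Hopf algebra map $\pi\colon \hopfdoble \twoheadrightarrow \ku C_2$ (sending $\gen,\genpi\mapsto\genpi$ and the rest to $0$) admitting the obvious section, so that $\hopfdoble \simeq \hopfdoble^{\operatorname{co}\pi}\#\ku C_2$ automatically. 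One then simply checks that each generator of $\widetilde{\cD}$ is $\pi$-coinvariant, and uses the basis $\mathsf{B}\sqcup\mathsf{B}\,\genpi$ to conclude $\widetilde{\cD} = \hopfdoble^{\operatorname{co}\pi}$. The braided Hopf algebra structure on $\widetilde{\cD}$ and the stated comultiplication then come for free from Radford's theory, with no relations to verify; observing that each generator is homogeneous for the $C_2$-grading gives the Hopf superalgebra structure. Your route is more elementary and self-contained; the paper's route is shorter and bypasses precisely the obstacle you anticipated.
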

\begin{proof}
\ref{prop:Db-1} Argue as in the proof of Proposition \ref{prop:presentation-tilde-D-0}; in turn  
\ref{prop:Db-1} implies \ref{prop:Db-4}.

\ref{prop:Db-2} Let $A$ be the algebra as in the statement. Using the commutation relations we have a surjective map of algebras $\phi\colon A\twoheadrightarrow \widetilde{\cD}$. 
Clearly the family $\mathsf{B}'$ consisting of the  monomials analogous to those in $\mathsf{B}$ generates $A$ and is linearly independent because $\phi(\mathsf{B}') = \mathsf{B}$. Hence $A\simeq \widetilde{\cD}$. 

\ref{prop:Db-3} Let us identify $\ku C_2$ with the subalgebra of $\hopfdoble$ generated by $\genpi\in G(\hopfdoble)$
by $\iota\colon \ku C_2 \hookrightarrow \hopfdoble$.
There is a Hopf algebra map $\pi\colon \hopfdoble\twoheadrightarrow \ku C_2$  given by
\begin{align*}
\xb_1&\mapsto 0, & \xb_2&\mapsto 0, & \ub_1&\mapsto 0, & \ub_2&\mapsto 0, & \gen&\mapsto \genpi, & \zeta&\mapsto 0, & \genpi &\mapsto \genpi.
\end{align*}
Then $\pi \iota = \id$ and
$\hopfdoble \simeq \hopfdoble^{\operatorname{co} \pi} \# \ku C_2$. 
Clearly $\widetilde{\cD}\subseteq \hopfdoble^{\operatorname{co} \pi}$ because every generator of $\widetilde{\cD}$ is
coinvariant, and $\widetilde{\cD}$ is a braided Hopf algebra in $\customyd{\ku C_2}$ with comultiplication $\Delta_{\widetilde{\cD}}$ as in \eqref{eq:comultiplication-Dtilde}. But $\widetilde{\cD}\#\ku C_2$ contains  $\mathsf{B}_{\hopfdoble}$, thus $\widetilde{\cD} = \hopfdoble^{\operatorname{co} \pi}$. 
Since every generator of $\widetilde{\cD}$ is either even or odd, $\widetilde{\cD}$ is a Hopf superalgebra. 
\end{proof}

\begin{remark} \label{rem:widetildeD-doble}
Recall the Hopf superalgebras $\widetilde{\cH}$ \eqref{eq:def-cH} and $\widetilde{\cK}$ \eqref{eq:def-cK}.
There are Hopf superalgebra maps $\widetilde{\cH}\hookrightarrow \widetilde{\cD}$ and $\widetilde{\cK}^{\operatorname{op}}\hookrightarrow \widetilde{\cD}$ 
(by the presentation of $\widetilde{\cD}$) and
$\widetilde \cD$ is isomorphic to the double of $\widetilde\cH$ with respect to a suitable skew-pairing, cf. \cite{gzb}.
\end{remark}

\subsection{The double as a super abelian extension}
We show that $\widetilde{\cD}$ fits into an exact sequence of Hopf superalgebras
$R \hookrightarrow \widetilde{\cD} \twoheadrightarrow U$
with $R$ super commutative and $U$ super cocommutative.

Let $\mathfrak{G}$ be the super algebraic group such that its algebra of functions is 
the commutative Hopf superalgebra $\cO(\mathfrak{G}) \coloneqq \ku[X_1,X_2,T^{\pm}]\ot\Lambda(Y_1,Y_2)$ with 
\begin{align*}
|X_1| &= |X_2| = |T| = 0, & |Y_1| &= |Y_2| = 1,
\end{align*} 
and comultiplication
\begin{align}\label{eq:comultiplication-of-super-G}
\begin{aligned}
&\begin{aligned}\Delta(X_1) &= X_1 \ot 1 + T^2 \ot X_1 + Y_1 T \ot Y_1,& \Delta(T) &= T\ot T,\\
\Delta(X_2) &= X_2 \ot 1 + 1\ot X_2 + Y_2\ot Y_2,
\end{aligned}
\\
&\begin{aligned}
\Delta(Y_2) &= Y_2 \ot 1 + 1 \ot Y_2, & \Delta(Y_1) &= Y_1 \ot 1 + T \ot Y_1.
\end{aligned}
\end{aligned}
\end{align}

\begin{theorem}\label{prop:Db-exact}
There is an exact sequence of Hopf superalgebras
\begin{align}\label{eq:Db-exact}
\cO(\mathfrak{G})\hookrightarrow \widetilde{\cD}\twoheadrightarrow U(\mathfrak{osp}(1|2)).
\end{align}
\end{theorem}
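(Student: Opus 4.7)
The plan is to construct explicit Hopf superalgebra maps $\iota$ and $\pi$ realising the sequence \eqref{eq:Db-exact} and to verify the four conditions \ref{suc-exacta-1}--\ref{suc-exacta-4} using the PBW basis of $\widetilde{\cD}$ from Proposition \ref{prop:Db} together with Remark \ref{remark-exact-sequence-hopf}. First I define $\iota\colon \cO(\mathfrak{G}) \to \widetilde{\cD}$ on generators by
\[
T \mapsto \gentilde, \qquad Y_1 \mapsto \xb_1, \qquad Y_2 \mapsto \ub_1, \qquad X_1 \mapsto \xb_{21}, \qquad X_2 \mapsto -\ub_{21}.
\]
That $\iota$ is well defined amounts to checking $\xb_1^{2}=\ub_1^{2}=0$ and the pairwise super commutativity of the five images. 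Most of the commutation identities are part of Proposition \ref{prop:Db}\ref{prop:Db-2} or appear in the proof of Proposition \ref{prop:presentation-tilde-D-0}; the only new computation is $\xb_{21}\ub_{21}=\ub_{21}\xb_{21}$, a short manipulation using $\ub_{21}\xb_2=\xb_2\ub_{21}+(\gentilde+1)\ub_1$ and $\xb_1\ub_1=-\ub_1\xb_1$. The coalgebra condition reduces to the two identities
\[
\Delta(\xb_{21}) = \xb_{21}\ot 1 + \gentilde^{2}\ot \xb_{21} + \xb_1\gentilde \ot \xb_1, \qquad \Delta(\ub_{21}) = \ub_{21}\ot 1 + 1 \ot \ub_{21} - \ub_1 \ot \ub_1,
\]
which follow from \eqref{eq:comultiplication-Dtilde} and the super sign convention in $\widetilde{\cD}\ot\widetilde{\cD}$.

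Second, I define $\pi\colon \widetilde{\cD}\to U(\mathfrak{osp}(1|2))$ by $\gentilde \mapsto 1$, $\zeta \mapsto h$, $\xb_1\mapsto 0$, $\ub_1 \mapsto 0$, $\xb_2 \mapsto \psi_+$ and $\ub_2 \mapsto -\psi_-$. Every defining relation of Proposition \ref{prop:Db}\ref{prop:Db-2} is preserved: those containing $\xb_1$ or $\ub_1$ collapse trivially, the $\zeta$-relations reproduce $[h,\psi_\pm]=\pm\psi_\pm$, and $\ub_2\xb_2=-\xb_2\ub_2+\gentilde\zeta+\xb_2\ub_1$ yields $\psi_+\psi_-+\psi_-\psi_-=-h$ precisely because of the minus sign in $\pi(\ub_2)$; coproduct compatibility is immediate on each generator. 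Surjectivity \ref{suc-exacta-2} is clear since $e=\psi_+^{2}$ and $-f=\psi_-^{2}$ lie in the image. Injectivity \ref{suc-exacta-1} of $\iota$ follows from Proposition \ref{prop:Db}\ref{prop:Db-1}: the standard basis $T^{c}X_1^{a}X_2^{b}Y_1^{\epsilon_1}Y_2^{\epsilon_2}$ of $\cO(\mathfrak{G})$ is mapped, up to signs and super reordering, into the linearly independent subfamily of the PBW basis of $\widetilde{\cD}$ obtained by setting $n_2=m=m_2=0$.

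For condition \ref{suc-exacta-3}, the inclusion $\widetilde{\cD}\,\iota(\cO(\mathfrak{G}))^+\subseteq \ker\pi$ is immediate. For the reverse I would establish a straightening isomorphism
\[
\iota(\cO(\mathfrak{G}))\ \ot\ \ku\{\xb_2^{n_2}\zeta^{m}\ub_2^{m_2}:(n_2,m,m_2)\in\N_0^{3}\} \xrightarrow{\ \sim\ } \widetilde{\cD},
\]
by induction on PBW degree: every commutation needed to move $\xb_2$, $\zeta$ or $\ub_2$ across a generator of $\iota(\cO(\mathfrak{G}))$ yields a correction that is again the product of an element of $\iota(\cO(\mathfrak{G}))$ with a strictly lower-degree $\{\xb_2,\zeta,\ub_2\}$-monomial. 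This presents $\widetilde{\cD}$ as a free $\iota(\cO(\mathfrak{G}))$-module; in particular $\iota$ is faithfully flat, and the quotient $\widetilde{\cD}/\widetilde{\cD}\,\iota(\cO(\mathfrak{G}))^+$ inherits $\{\xb_2^{n_2}\zeta^{m}\ub_2^{m_2}\}$ as a basis, matching the PBW basis of $U(\mathfrak{osp}(1|2))$ via $\psi_+^{2k}=e^{k}$ and $\psi_-^{2k}=(-f)^{k}$, so $\pi$ descends to an isomorphism. Normality of $\iota(\cO(\mathfrak{G}))$ under the left adjoint action of $\widetilde{\cD}$ is verified on generators, and condition \ref{suc-exacta-4} then follows from Remark \ref{remark-exact-sequence-hopf}. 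I expect the principal obstacle to be the bookkeeping in the straightening argument, i.e. making sure that no correction term ever escapes the controlled shape $\iota(\cO(\mathfrak{G}))\cdot \{\xb_2^{n_2}\zeta^{m}\ub_2^{m_2}\}$.
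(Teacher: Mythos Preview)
Your proof is correct and follows essentially the same route as the paper's: define $\iota$ on the natural generators $\gentilde,\xb_1,\xb_{21},\ub_1,\ub_{21}$, define $\pi$ by killing $\xb_1,\ub_1$ and sending $\xb_2,\ub_2,\zeta$ to Cartan--odd generators of $\mathfrak{osp}(1|2)$, and read off \ref{suc-exacta-3} from the PBW basis. Your specific choices differ from the paper's only by harmless sign/role conventions (the paper takes $X_2\mapsto \ub_{21}$, $\xb_2\mapsto\psi_-$, $\ub_2\mapsto\psi_+$, $\zeta\mapsto -h$); your computation of $\Delta(\ub_{21})$ in fact shows that your sign on $X_2$ is the one making $\iota$ a coalgebra map on the nose. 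You are also more explicit than the paper about condition \ref{suc-exacta-4}: the paper's proof stops after \ref{suc-exacta-3}, whereas you correctly note that freeness of $\widetilde{\cD}$ over $\iota(\cO(\mathfrak{G}))$ plus normality allows Remark \ref{remark-exact-sequence-hopf} to close the argument---this is the same mechanism the paper invokes in the parallel proofs of Propositions \ref{prop:cD-as-Hopf-superalgebra}\ref{prop:propiedades-cD-2} and \ref{prop:cOG-exact}.
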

\begin{proof}
From the defining relations of $\widetilde{\cD}$ we deduce that
\begin{align*}
\xb_{21} \xb_1 &= \xb_1 \xb_{21}, & \xb_{21} \ub_1 &= \ub_1 \xb_{21}, & \ub_{21} \ub_1 &= \ub_1 \ub_{21}, \\
\gentilde  \xb_{21} &= \xb_{21} \gentilde, & \xb_{21} \ub_{21} &= \ub_{21} \xb_{21}, & \gentilde \ub_{21} &= \ub_{21} \gentilde.
\end{align*}
Hence the  map $\iota: \cO(\mathfrak{G}) \hookrightarrow \widetilde{\cD}$ given by
\begin{align*}
Y_1 &\mapsto \xb_1, & Y_2&\mapsto \ub_1, & X_1 &\mapsto \xb_{21}, & X_2 &\mapsto \ub_{21}, & T&\mapsto \gentilde
\end{align*}
is a well defined injective morphism of Hopf superalgebras.
Next the  map $\pi\colon \widetilde{\cD} \longrightarrow U(\mathfrak{osp}(1|2))$ given by
\begin{align*}
\xb_1 &\mapsto 0, & \xb_2 &\mapsto \psi^-, & \ub_1&\mapsto 0, & \ub_2&\mapsto \psi^+, & \gentilde&\mapsto 1, & \zeta&\mapsto -h.
\end{align*}
is a well defined surjective morphism of Hopf superalgebras,
since $\pi(\xb_{21}) = \pi(\ub_{21})=0$, $\pi(-\xb_2^2) = f$ and $\pi(\ub_2^2) = e$. 
We check that $\ker \pi = \widetilde{\cD} \iota(\cO(\mathfrak{G}))^+$ using the PBW-basis of $\widetilde{\cD}$.
Thus \eqref{eq:Db-exact} is exact.
\end{proof}
\subsection{A central Hopf subalgebra} In this Subsection $\car \ku = p > 2$. We show that $\widetilde{\cD}$ has a central Hopf subalgebra $Z = \cO(\Bb)$, with $\Bb$ a solvable algebraic group. We shall need the following commutation relations in $\widetilde{\cD}$.
\begin{lemma}\label{lemma:commutation-relations-Db}
The following equalities are valid for all $n,m\in\N_0$:
\begin{align*}
\xb_2^{2m} \xb_{21}^n =& \sum_{k=0}^{m} \binom{m}{k} [n]^{[k]} \xb_{21}^{n+k} \xb_2^{2(m-k)},\\
\xb_2^{2m+1} \xb_{21}^n =& \sum_{\ell=0}^1 \sum_{k=0}^{m} \binom{m}{k} [n]^{[k+\ell]} \xb_1^\ell \xb_{21}^{n+k} \xb_2^{2(m-k) -\ell + 1}, \\
\xb_2^{2m} \xb_1 =& \sum_{k=0}^m (-1)^k [-m]^{[k]} \xb_1 \xb_{21}^k \xb_2^{2(m-k)},\\
\xb_2^{2m+1} \xb_1 =& \sum_{\ell=0}^1\sum_{k=0}^m (-1)^{k+\ell} [-m]^{[k]} x_1^\ell \xb_{21}^{k-\ell + 1} \xb_2^{2(m-k)+\ell}\\
\gentilde^n \xb_2^{2m} =& \sum_{k=0}^m \binom{m}{k} [-n]^{[k]} \xb_{21}^k \xb_2^{2(m-k)} \gentilde^n,\\
\gentilde^n \xb_2^{2m+1} =&  \sum_{\ell=0}^1 \sum_{k=0}^m \binom{m}{k} (-1)^{n+1} [-n]^{[k+\ell]} x_1^\ell \xb_{21}^k \xb_2^{2(m-k)-\ell+1} \gentilde^n,\\
\xb_2^{2m} \xb_{21}^n =& \sum_{k=0}^{m} \binom{m}{k} [n]^{[k]} \xb_{21}^{n+k} \xb_2^{2(m-k)},\\
\xb_2^{2m+1} \xb_{21}^n =& \sum_{\ell=0}^1 \sum_{k=0}^{m} \binom{m}{k} [n]^{[k+\ell]} \xb_1^\ell \xb_{21}^{n+k} \xb_2^{2(m-k) -\ell + 1}, \\
\zeta^n \xb_2^m =& \sum_{\ell=0}^{n} \binom{n}{\ell}m^{n-\ell} \xb_2^m \zeta^\ell,\\
\zeta^n \xb_{21}^m =& \sum_{\ell=0}^{n} \binom{n}{\ell}(2m)^{n-\ell} \xb_{21}^m \zeta^\ell,\\
\ub_2^{2m} \ub_1 =& \sum_{k=0}^m (-1)^k [-m]^{[k]} \ub_1 \ub_{21}^k \ub_2^{2(m-k)},\\
\ub_2^{2m+1} \ub_1 =& \sum_{\ell=0}^1\sum_{k=0}^m (-1)^{k+\ell} [-m]^{[k]} x_1^\ell \ub_{21}^{k-\ell + 1} \ub_2^{2(m-k)+\ell}\\
\ub_2^{2m} \gentilde^n  =& \sum_{k=0}^m \binom{m}{k} [-n]^{[k]}  \gentilde^n \ub_{21}^k \ub_2^{2(m-k)},\\
\ub_2^{2m+1} \gentilde^n  =&  \sum_{\ell=0}^1 \sum_{k=0}^m \binom{m}{k} (-1)^{n+1} [-n]^{[k+\ell]} x_1^\ell \gentilde^n \ub_{21}^k \ub_2^{2(m-k)-\ell+1},\\
\ub_2^m \zeta^n  =& \sum_{\ell=0}^{n} \binom{n}{\ell}m^{n-\ell} \zeta^\ell \ub_2^m ,\\
\ub_{21}^m \zeta^n  =& \sum_{\ell=0}^{n} \binom{n}{\ell}(2m)^{n-\ell} \zeta^\ell \ub_{21}^m,\\
\ub_2 \xb_{21}^n =& \xb_{21} \ub_2 - 2n \xb_{21}^n \ub_1 + n \xb_1 \xb_{21}^{n-1}(1+\gentilde),\\
\ub_{21}^n \xb_2 =& \xb_2 \ub_{21}^n + n (\gentilde +1) \ub_1 \ub_{21}^{n-1},\\
\ub_2^{2n} \xb_1 =& \xb_1 \ub_2^{2n} - n \xb_1 \ub_{21} \ub_2^{2(n-1)} + n \ub_1 \ub_2^{2n-1},\\
\ub_2^{2n} \xb_2 =& \xb_2 \ub_2^{2n} - n \xb_2 \ub_{21} \ub_2^{2(n-1)} + n \gentilde \ub_2^{2n-1} \\
&- n\gentilde \ub_1 \ub_2^{2(n-1)} - n(n-1)
\gentilde \ub_{21} \ub_2^{2(n-1)-1},\\
\ub_1 \xb_2^{2n} =& \xb_2^{2n}\ub_1 + n\xb_2^{2(n-1)}\xb_1\gentilde - n(n-1) \xb_{21} \xb_2^{2(n-2)}\xb_1\gentilde,\\
\ub_2 \xb_2^{2n} =& \xb_2^{2n} \ub_2 - 2n \xb_2^{2n}\ub_1 + n \xb_2^{2n-1} - n \xb_2^{2(n-1)}\xb_1 \gentilde \zeta\\
& + n(n-1)\xb_{21}\xb_2^{2(n-2)}\xb_1\gentilde\zeta - n(2n-1)\xb_2^{2(n-1)}\xb_1\gentilde\\
& + n(n-1)(2n-1) \xb_{21}\xb_2^{2(n-2)}\xb_1\gentilde,
\end{align*}
\begin{align*}
\gentilde^n \xb_1 &=  \xb_1 \gentilde^n,& \gentilde^n \xb_{21}^m &= \xb_{21}^m \gentilde^n,& \xb_1 \xb_{21}^n &= \xb_{21}^n \xb_1,\\
\ub_{21}^n \xb_1 &= \xb_1 \ub_{21}^n, & \xb_{21}^n \ub_1 &= \ub_1 \xb_{21}^n.  
\end{align*}
\end{lemma}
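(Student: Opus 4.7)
The plan is to prove each identity by induction on the exponent in the statement (either $n$, $m$, or both, depending on the identity), with base cases coming directly from the defining relations of $\widetilde{\cD}$ listed in Proposition \ref{prop:Db}\ref{prop:Db-2} together with the relations \eqref{eq:def-super-jordan} in the $\xb_i$'s and $\ub_i$'s. I would sort the identities into three groups and handle them in the order: first the routine commutations with $\gentilde$ and $\zeta$, then the intrinsic relations inside $\cB$ and $\cB^{d,\operatorname{op}}$, and finally the genuinely new cross-commutations between the $\xb$'s and $\ub$'s coming from the Drinfeld double structure.

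The first group contains identities such as $\gentilde^n\xb_1 = \xb_1\gentilde^n$, $\gentilde^n\xb_{21}^m = \xb_{21}^m \gentilde^n$, $\xb_1\xb_{21}^n = \xb_{21}^n\xb_1$, $\ub_{21}^n\xb_1 = \xb_1\ub_{21}^n$, and $\xb_{21}^n\ub_1 = \ub_1\xb_{21}^n$; these follow at once from the basic commutations in Proposition \ref{prop:Db}\ref{prop:Db-2}, since all the relevant pairs commute (note that $\xb_1\xb_{21}=\xb_{21}\xb_1$ follows from $\xb_1^2=0$). The formulas like $\zeta^n\xb_2^m = \sum_\ell \binom{n}{\ell} m^{n-\ell}\xb_2^m \zeta^\ell$ are obtained by induction on $n$ from $\zeta\xb_2 = \xb_2\zeta + \xb_2$ and Pascal's identity, and similarly for $\zeta^n\xb_{21}^m$, $\ub_2^m\zeta^n$, $\ub_{21}^m\zeta^n$; for the last pair one uses $\zeta\xb_{21} = \xb_{21}\zeta + 2\xb_{21}$ (derived from $\zeta\xb_1 = \xb_1\zeta + \xb_1$ and $\zeta\xb_2 = \xb_2\zeta + \xb_2$) and the analogous relation for $\ub_{21}$.

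For the second group, the intrinsic identities in $\cB$, such as $\xb_2^{2m}\xb_{21}^n = \sum_k \binom{m}{k}[n]^{[k]} \xb_{21}^{n+k}\xb_2^{2(m-k)}$, I would first derive the one-step relation $\xb_2\xb_{21} = \xb_{21}\xb_2 + \xb_1 \xb_{21}$ (from \eqref{eq:def-super-jordan}) and then induct on $m$, splitting into even and odd powers because of $\xb_1^2 = 0$: squaring the one-step relation gives a clean $\xb_{21}$-linear rule $\xb_2^2 \xb_{21} = \xb_{21}\xb_2^2 + \xb_{21}^2$ (schematically), which produces the raising factorial $[n]^{[k]}$ via the recursion $[n]^{[k+1]} = (n+k)[n]^{[k]}$ combined with Pascal's identity. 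The odd-power cases $\xb_2^{2m+1}\xb_{21}^n$ and $\xb_2^{2m+1}\xb_1$ follow by multiplying the even-power identity by $\xb_2$ and reducing $\xb_2\xb_1$ to $\xb_{21} - \xb_1\xb_2$; the $\gentilde^n\xb_2^m$ formulas are similar, using $\gentilde \xb_2 = \xb_2\gentilde - \xb_1\gentilde$. All identities in the $\ub$'s follow in the same manner since $\cB^{d,\operatorname{op}}$ satisfies structurally parallel relations.

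The third group contains the cross-commutators $\ub_2\xb_{21}^n$, $\ub_{21}^n\xb_2$, $\ub_2^{2n}\xb_1$, $\ub_2^{2n}\xb_2$, $\ub_1\xb_2^{2n}$ and the most complicated $\ub_2\xb_2^{2n}$. These I would prove last, by induction on $n$ starting from the defining relations \eqref{eq:Relaciones-D0-part1} adapted with $\gentilde$ in place of $\gen\genpi$, combined with the second-group identities to reorder the emerging monomials into the claimed normal form. The main obstacle is the bookkeeping in the last formula: each application of $\ub_2\xb_2 = -\xb_2\ub_2 + \gentilde\zeta + \xb_2\ub_1$ produces three summands, and commuting the freshly created $\gentilde\zeta$ and $\ub_1$ past the remaining $\xb_2^{2n-1}$ uses in turn the formulas $\gentilde^n\xb_2^{2m}$, $\zeta^n\xb_2^m$, and $\ub_1\xb_2^{2n}$ from the earlier two groups, so the terms involving $\xb_{21}\xb_2^{2(n-2)}\xb_1\gentilde$ and $\xb_2^{2(n-1)}\xb_1\gentilde\zeta$ accumulate with coefficients that must recombine into the asserted $n(n-1)$, $n(2n-1)$ and $n(n-1)(2n-1)$. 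I would isolate these cleanly by proving first the simpler $\ub_1\xb_2^{2n}$ and $\ub_2^{2n}\xb_2$ identities, and then plug them into the inductive step for $\ub_2\xb_2^{2n}$; the signs and binomial manipulations are routine but require discipline, and this is where I expect virtually all of the verification effort to go.
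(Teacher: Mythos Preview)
Your proposal is correct and matches the paper's approach exactly: the paper's entire proof is the single phrase ``Straightforward by induction,'' and your three-tier plan (routine commutations, intrinsic relations in $\cB$ and $(\cB^d)^{\operatorname{op}}$, then the cross-relations) is precisely the organized elaboration of that induction. Your derivation of the key quadratic step $\xb_2^2\xb_{21} = \xb_{21}\xb_2^2 + \xb_{21}^2$ and your ordering of the third group (establishing $\ub_1\xb_2^{2n}$ before attacking $\ub_2\xb_2^{2n}$) are exactly the right moves.
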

\begin{proof}
Straightforward by induction.
\end{proof}
For our next statement we need to set up the notation. 
Let
\begin{align}\label{eq:alggroup-B}
\Bb \coloneqq \left((\Gb_a \times \Gb_a) \rtimes \Gb_m\right) \times \Hb_3
\end{align}
be the algebraic group that in the first factor has the semidirect product where $\Gb_m$ acts on $\Gb_a \times \Gb_a$
by 
$\lambda_ \cdot(r_1, r_2) =(\lambda^2 r_1, \lambda^2 r_2)$, $\lambda \in \kut$, $r_1, r_2\in\ku$ while in the second factor
appears the Heisenberg group $\Hb_3$ i.e. the group of upper triangular matrices with ones in the diagonal.
Let $\zeta^{(p)}\coloneqq \zeta^p-\zeta$ and $Z \coloneqq \ku \langle \xb_{21}^p, \xb_2^{2p}, \ub_{21}^p, \ub_2^{2p}, \gentilde^p,\zeta^{(p)}\rangle \hookrightarrow \widetilde{\cD}$. Note that $Z$ is an even subalgebra.

\begin{theorem}\label{thm:Z} \begin{enumerate}[leftmargin=*,label=\rm{(\roman*)}]
\item\label{Z-1} $Z$ is a central Hopf subalgebra of $\widetilde{\cD}$.

\smallbreak
\item\label{Z-2} $\widetilde{\cD}$ is a finitely generated free $Z$-module.

\smallbreak
\item\label{Z-3} $Z\simeq \ku[T^{\pm},X_1,\dots,X_5]$ as an algebra. In particular $Z$ is a domain.

\smallbreak
\item\label{Z-4} $Z\simeq \cO(\Bb)$ as Hopf algebras.
\end{enumerate}
\end{theorem}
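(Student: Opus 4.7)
The plan is to prove the four assertions in order, using the explicit commutation and coproduct formulas in $\widetilde{\cD}$ together with the characteristic-$p$ miracles that kill various coefficients.

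For \ref{Z-1}, I begin with centrality. Using Lemma \ref{lemma:commutation-relations-Db}, I specialize the commutation identities at $n=p$ (or $m=p$). In each formula the obstructing coefficients are of the form $\binom{p}{k}$ with $0<k<p$, or factors $p$ (in $p^\ell$), or $n(n-1)$, $n(2n-1)$, or the rising factorial $[-p]^{[k]}$ (which for $k\geq 1$ contains the factor $-p$); all vanish in characteristic $p$. For instance $\ub_2\xb_{21}^p=\xb_{21}^p\ub_2$, $\gentilde\xb_2^{2p}=\xb_2^{2p}\gentilde$ and $\zeta\xb_2^{2p}=\xb_2^{2p}\zeta$ drop out at once. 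For $\zeta^{(p)}$ the shortcut is to observe that $\ad\zeta$ acts on each of $\xb_1,\xb_2,\ub_1,\ub_2,\gentilde$ by multiplication by an integer in $\I_{-1,1}$; Fermat gives $(\ad\zeta)^p=\ad\zeta$ on generators, hence $\ad(\zeta^{(p)})=0$. For the Hopf subalgebra claim I compute the coproducts: $\gentilde^p$ is group-like, $\zeta^{(p)}$ is primitive, and formula \eqref{eq:coproduct-dual-super-jordan} evaluated at $n=p$ yields $\Delta(\ub_{21}^p)=\ub_{21}^p\ot 1+1\ot\ub_{21}^p$ (up to a $\gentilde^{2p}$ correction once $\ub_{21}$ is read inside $\widetilde{\cD}$) and a similar expression for $\ub_2^{2p}$ where a single mixed term involving $\zeta^{(p)}\ot\ub_{21}^{p-1}$-type survives after the $p^\ell$ collapse. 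For the $\xb$-side I compute $\Delta(\xb_{21})=\xb_{21}\ot 1+\gentilde^2\ot\xb_{21}+\gentilde\xb_1\ot\xb_1$, check that the three super-summands pairwise commute in $\widetilde{\cD}\ot\widetilde{\cD}$, note that the middle summand squares to zero thanks to $\xb_1^2=0$, and invoke the multinomial theorem modulo $p$ to conclude $\Delta(\xb_{21}^p)=\xb_{21}^p\ot 1+\gentilde^{2p}\ot\xb_{21}^p$; an analogous but longer induction handles $\Delta(\xb_2^{2p})$. All coproducts land in $Z\ot Z$, so $Z$ is a sub-bialgebra, and since it is central it is automatically $\Ss$-stable.

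For \ref{Z-2}, I use the PBW basis of Proposition \ref{prop:Db}\ref{prop:Db-1}. The proposed $Z$-basis consists of the $16 p^6$ monomials $\xb_1^{n_1}\xb_{21}^{n_{21}}\xb_2^{n_2}\gentilde^{n}\zeta^{m}\ub_1^{m_1}\ub_{21}^{m_{21}}\ub_2^{m_2}$ with $n_1,m_1\in\I_{0,1}$, $n_{21},n,m,m_{21}\in\I_{0,p-1}$ and $n_2,m_2\in\I_{0,2p-1}$. Spanning is a straightforward division algorithm: $\xb_2^{2p},\xb_{21}^p,\ub_2^{2p},\ub_{21}^p,\gentilde^{\pm p}\in Z$ shrink the respective exponents, and $\zeta^p=\zeta^{(p)}+\zeta$ allows one to reduce the $\zeta$-exponent modulo $p$ after absorbing a $\zeta^{(p)}\in Z$. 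Freeness follows because any non-trivial $Z$-relation, when expanded in the $Z$-generators, would produce a non-trivial $\ku$-linear relation among the full PBW basis of $\widetilde{\cD}$.

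For \ref{Z-3} I note that $\gentilde^p$ is invertible and the six generators commute by \ref{Z-1}, hence there is a surjection $\ku[T^{\pm 1},X_1,\dots,X_5]\twoheadrightarrow Z$. Its kernel is trivial: in the PBW-expansion of a monomial $(\gentilde^p)^a\prod_i(\xb_\star^{p\cdot\bullet})^{b_i}(\zeta^{(p)})^c$, the leading term (with respect to the lexicographic order on PBW monomials, using $\zeta^p=\zeta^{(p)}+\zeta$ triangularly) is the corresponding PBW monomial, and these leading terms are distinct for distinct multi-exponents.

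For \ref{Z-4} I match the Hopf structure of $Z$ computed in \ref{Z-1} with $\cO(\Bb)$. Identify $\gentilde^p$ with the generator of $\cO(\Gb_m)$; $\xb_{21}^p,\ub_{21}^p$ with the coordinates on $\Gb_a\times\Gb_a$, whose $(\gentilde^{2p},1)$- and $(1,\gentilde^{2p})$-primitivity encodes exactly the $\Gb_m$-action by the character $\lambda\mapsto\lambda^2$; and $\xb_2^{2p},\ub_2^{2p},\zeta^{(p)}$ with the three coordinates on $\Hb_3$, where primitivity of the first two together with the mixed term $\zeta^{(p)}\otimes(\cdot)$ (or its $\xb$-counterpart) in $\Delta(\xb_2^{2p}),\Delta(\ub_2^{2p})$ reproduces precisely the Heisenberg coproduct $\Delta(C)=C\ot 1+1\ot C+A\ot B$ after a suitable linear change among $\xb_2^{2p},\ub_2^{2p},\zeta^{(p)}$. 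The resulting algebra map $\cO(\Bb)\to Z$ is a Hopf isomorphism by \ref{Z-3}.

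The main obstacle is the Hopf-subalgebra check in \ref{Z-1}, specifically establishing the explicit forms of $\Delta(\xb_{21}^p),\Delta(\xb_2^{2p}),\Delta(\ub_2^{2p})$; once those are in hand, step \ref{Z-4} reduces to matching notations and a bookkeeping change of variables between the central/mixed coordinates of $\Hb_3$ and the triple $\xb_2^{2p},\ub_2^{2p},\zeta^{(p)}$.
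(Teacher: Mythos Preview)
Your overall strategy for \ref{Z-1}--\ref{Z-3} is the same as the paper's, and your multinomial argument for $\Delta(\xb_{21}^p)$ is a pleasant shortcut. However, several of the coproducts you describe are wrong, and this propagates into a genuine error in \ref{Z-4}.

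Concretely: in $\widetilde{\cD}$ the elements $\ub_1,\ub_2$ do not involve $\gentilde$ in their coproducts (recall $\ub_1\in\Pc(\widetilde{\cD})$ and \eqref{eq:comultiplication-Dtilde}), so $\ub_{21}^p$ is honestly primitive, with no ``$\gentilde^{2p}$ correction''. Next, specializing the formula for $\Delta(\xb_2^{2n})$ at $n=p$ gives $\Delta(\xb_2^{2p})=\xb_2^{2p}\ot 1+\gentilde^{2p}\ot\xb_2^{2p}$: it is $(\gentilde^{2p},1)$-skew-primitive with \emph{no} mixed term. Finally, the surviving mixed term in $\Delta(\ub_2^{2p})$ is $-\zeta^{(p)}\ot\ub_{21}^{p}$, not a ``$\ub_{21}^{p-1}$-type'' term; getting this requires the Stirling identity $[\zeta]^{[p]}=\zeta^p-\zeta$, not merely the $p^\ell$ collapse.

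These corrections force a different matching in \ref{Z-4}. The pair going to $(\Gb_a\times\Gb_a)\rtimes\Gb_m$ is $\{\xb_{21}^p,\xb_2^{2p}\}$ (both $(\gentilde^{2p},1)$-skew-primitive, encoding the weight-$2$ action of $\Gb_m$), while the Heisenberg triple is $\{\zeta^{(p)},\ub_{21}^p,\ub_2^{2p}\}$ (the first two primitive, the last with $\Delta(\ub_2^{2p})=\ub_2^{2p}\ot 1+1\ot\ub_2^{2p}-\zeta^{(p)}\ot\ub_{21}^p$). Your proposed assignment, with $\ub_{21}^p$ in the $\Gb_m$-twisted factor and $\xb_2^{2p}$ in $\Hb_3$, cannot be repaired by any linear change of variables: $\xb_2^{2p}$ carries $\gentilde^{2p}$ in its coproduct and therefore cannot sit in a factor disjoint from $\Gb_m$, while $\ub_{21}^p$ is primitive and hence cannot detect the $\Gb_m$-action. (A minor aside: ``central $\Rightarrow$ $\Ss$-stable'' is not a general fact; here antipode-stability follows instead from the explicit skew-primitive/primitive form of the generators together with $\gentilde^{-p}\in Z$.)
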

\begin{proof}
\ref{Z-1} By Lemma \ref{lemma:commutation-relations-Db} $Z$ is a even central subalgebra of $\widetilde{\cD}$. We need to verify that is a subcoalgebra and invariant by the antipode. We have the following comultiplication formulas for every $n\in \N$
\begin{align*}
\Delta(\xb_{21}^n) =& \sum_{\ell=0}^{1}\sum_{k=0}^{n-\ell}\binom{n-\ell}{k} n^\ell
\xb_1^\ell \xb_{21}^k \gentilde^{2(n-k)-\ell}\ot \xb_1^\ell 
\xb_{21}^{n-k-\ell},\\
\Delta(\xb_2^{2n}) =& \sum_{\ell=0}^1\sum_{k=0}^{n-\ell} \sum_{t=0}^{k} \binom{n-\ell}{k}\binom{k}{t} n^\ell
[k-n+\ell]^{[t]}\\
&
\xb_1^\ell \xb_{21}^t \xb_2^{2(k-t)} \gentilde^{2(n-k)-\ell}\ot \xb_2^{2(n-k)-\ell},\\
\Delta(\ub_{21}^n) =& \sum_{\ell=0}^{1}\sum_{k=0}^{n-\ell}\binom{n-\ell}{k} n^\ell
\ub_1^\ell \ub_{21}^k \ot \ub_1^\ell \ub_{21}^{n-k-\ell}.
\end{align*}
Hence $\ub_{21}^p,\zeta^{(p)}\in\Pc(\widetilde{\cD})$, $\xb_2^{2p},\xb_{21}^p$ are $(1,\gentilde^{2p})$-primitive and $\gentilde^{p}\in G(\widetilde{\cD})$. It only remains to calculate $\Delta(\ub_2^{2p})$. Recall the formula \eqref{eq:coproduct-dual-super-jordan} and
\begin{align*}
\delta(\ub_2^{2n}) &=\sum_{k=0}^n \sum_{j=0}^{k} \binom{n}{k} \stirling{k}{j} (-1)^k \zeta^j\ot \ub_{21}^k\ub_2^{2(n-k)}, & n\in\N,
\end{align*}
where $\stirling{k}{j}$ are the Stirling numbers. Since $[\zeta]^{[p]} = \prod_{i=1}^{p} (\zeta+i-1) = \sum_{k=0}^{p} \stirling{p}{k} \zeta^k = \zeta^p - \zeta$, we have 
\begin{align}\label{eq:stirling}
\stirling{p}{k} &= 0, \ k=2,\dots,p-1, &
\stirling{p}{p}&=1, &\text{ and }\stirling{p}{1} &= -1.
\end{align} 
Then we get $\Delta(\ub_2^{2p}) = \ub_2^{2p}\ot 1 + 1\ot \ub_2^{2p} - \zeta^{(p)}\ot \ub_{21}^p$.

\ref{Z-2} To prove this we consider another basis of $\widetilde{\cD}$ using a different basis of $\ku[\zeta]$. The family of
polynomials 
\begin{align*}
(\zeta^{(p)})^k \zeta^j, && k\in\N_0, j\in\I_{0, p-1},
\end{align*}
is a basis of $\ku[\zeta]$, see the proof of \cite[Prop. 2.6]{ap}. Thus the elements
\begin{align*}
\xb_1^{n_1} \xb_{21}^{n_{21}} \xb_2^{n_2} \gentilde^n (\zeta^{(p)})^k \zeta^j \ub_1^{m_1} \ub_{21}^{m_{21}} \ub_2^{m_2},
\end{align*}
with $(n_1,m_1,n,n_2,n_{21},k,m_2,m_{21},j)\in \I_{0,1}^2\times \N_0^6\times \I_{0, p-1}$ form a basis of $\widetilde{\cD}$. Hence a basis of $\widetilde{\cD}$ as a $Z$-module is given by
\begin{align*}
\xb_1^{n_1} \xb_{21}^{n_{21}} \xb_2^{n_2} \gentilde^n \zeta^j \ub_1^{m_1} \ub_{21}^{m_{21}} \ub_2^{m_2},
\end{align*}
with $(n_1,m_1,n_2,m_2,n_{21},m_{21},j,n)\in \I_{0, 1}^2\times\I_{0, 2p-1}^2\times\I_{0, p-1}^4$.

\ref{Z-3} The map $\phi\colon \ku[T^{\pm},X_1,\dots,X_5] \longrightarrow Z$ given by
\begin{align*}
T&\mapsto \gentilde^p, & X_1&\mapsto \xb_{21}^p, & X_2&\mapsto \xb_2^{2p}, & X_3&\mapsto -\zeta^{(p)}, & X_4&\mapsto \ub_{21}^{p}, &
X_5&\mapsto \ub_2^{2p},
\end{align*}
is the desired isomorphism of algebras.

\ref{Z-4} It is easy to see that $\cO(\Bb)\simeq \ku[T^{\pm},X_1,\dots,X_5]$ with comultiplication determined by $T\in G(\cO(\Bb))$, $X_1, X_2 \in\Pc_{T^2,1}(\cO(\Bb))$,
$X_3, X_5\in\Pc(\cO(\Bb))$ and
$\Delta(X_5) = X_5\ot 1 + 1\ot X_5 + X_3\ot X_4$.
The claim follows.
\end{proof}

\subsection{Ring theoretical properties of the double}
\begin{prop}
	\begin{enumerate}[leftmargin=*,label=\rm{(\roman*)}]
\item\label{prop:Ring-Prop-1} The algebra $\widetilde{\cD}$ admits an exhaustive ascending filtration
$(\widetilde{\cD}_n)_{n\in\N_{0}}$ such that $\gr \widetilde{\cD} \simeq \ku[X_1,\dots,X_5,T^{\pm 1}] \ot \Lambda(Y_1,\dots,Y_4)$.
\item\label{prop:Ring-Prop-2} The algebras $\hopfdoble$ and $\widetilde{\cD}$ are noetherian.
\item\label{prop:Ring-Prop-3} If $\car \ku = p > 2$ then $\widetilde{\cD}$ is a PI-algebra.
	\end{enumerate}
\end{prop}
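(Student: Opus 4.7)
The plan is to derive all three parts from the PBW basis in Proposition~\ref{prop:Db}, the exact sequence in Theorem~\ref{prop:Db-exact}, and the central Hopf subalgebra of Theorem~\ref{thm:Z}.

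For \ref{prop:Ring-Prop-1}, I would transport the standard PBW filtration of $U(\mathfrak{osp}(1|2))$ to $\widetilde{\cD}$ through the surjection $\pi$. Concretely, assign filtration degree $0$ to the generators $\xb_1,\ub_1,\xb_{21},\ub_{21},\gentilde^{\pm 1}$ of the sub Hopf superalgebra $\cO(\mathfrak{G})$ and filtration degree $1$ to the lifts $\xb_2,\ub_2,\zeta$ of $\psi_-,\psi_+,-h$. Writing the PBW basis of $\widetilde{\cD}$ in the \emph{refined} form
\begin{align*}
\xb_1^{n_1}\xb_{21}^{n_{21}}(\xb_2^2)^{a_1}\xb_2^{a_2}\gentilde^n\zeta^m\ub_1^{m_1}\ub_{21}^{m_{21}}(\ub_2^2)^{c_1}\ub_2^{c_2}
\end{align*}
with $a_2,c_2\in\I_{0,1}$ and the remaining exponents in $\N_0$ or $\Z$, obtained by splitting $\xb_2^{n_2}$ and $\ub_2^{m_2}$ according to parity, I would set $\widetilde{\cD}_k$ to be the span of those refined monomials of total degree $a_1+a_2+m+c_1+c_2\le k$.

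The main input is a relation-by-relation inspection (using Proposition~\ref{prop:Db}\ref{prop:Db-2} and Lemma~\ref{lemma:commutation-relations-Db}) confirming that every super commutator $ab-(-1)^{|a||b|}ba$ among generators lies in strictly smaller filtration than $ab$: for instance $\ub_2\xb_2+\xb_2\ub_2=\gentilde\zeta+\xb_2\ub_1$ displays an anti-commutator of degree $1<2$, while the tautology $\xb_2\cdot\xb_2=\xb_2^2\in\widetilde{\cD}_1$ forces $[\xb_2]^2=0$ in $\gr$ (and symmetrically $[\ub_2]^2=[\xb_1]^2=[\ub_1]^2=0$). Once this is done, $\gr\widetilde{\cD}$ is super commutative; counting refined PBW monomials by multi-exponent and matching parities supplies the correspondence $(X_1,X_2,X_3,X_4,X_5,T)\leftrightarrow(\xb_{21},\ub_{21},\zeta,\xb_2^2,\ub_2^2,\gentilde)$, $(Y_1,Y_2,Y_3,Y_4)\leftrightarrow(\xb_1,\ub_1,\xb_2,\ub_2)$, yielding the desired isomorphism $\gr\widetilde{\cD}\simeq\ku[X_1,\dots,X_5,T^{\pm 1}]\otimes\Lambda(Y_1,\dots,Y_4)$.

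For \ref{prop:Ring-Prop-2}, once \ref{prop:Ring-Prop-1} is established, $\gr\widetilde{\cD}$ is manifestly noetherian (tensor product of a Laurent polynomial ring in finitely many variables with a finite-dimensional exterior algebra). The standard filtered-to-graded lifting result (see e.g.\ McConnell--Robson) then shows that $\widetilde{\cD}$ is noetherian. Since $\hopfdoble\simeq\widetilde{\cD}\#\ku C_2$ by Proposition~\ref{prop:Db}\ref{prop:Db-3} is a free $\widetilde{\cD}$-module of rank $2$, noetherianity transfers to $\hopfdoble$.

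For \ref{prop:Ring-Prop-3}, in characteristic $p>2$ Theorem~\ref{thm:Z} provides a central Hopf subalgebra $Z\simeq\cO(\Bb)$ over which $\widetilde{\cD}$ is finitely generated and free of some finite rank $r$. Any algebra that is module-finite over a central commutative subring satisfies a polynomial identity: via the regular representation $\widetilde{\cD}\hookrightarrow\mathrm{End}_{Z}(\widetilde{\cD})\simeq M_r(Z)$ and the Amitsur--Levitzki theorem, $\widetilde{\cD}$ satisfies the standard identity of degree $2r$; hence $\widetilde{\cD}$ is PI. I anticipate the main obstacle to be the relation-by-relation verification in \ref{prop:Ring-Prop-1}: confirming that the filtration is multiplicative and that $\gr\widetilde{\cD}$ is super commutative requires patient use of the full list of commutations in Lemma~\ref{lemma:commutation-relations-Db}; the remaining parts then follow by standard patterns for filtered, smash-product and module-finite algebras.
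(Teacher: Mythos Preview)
Your argument is correct, and for \ref{prop:Ring-Prop-2} and \ref{prop:Ring-Prop-3} it coincides with the paper's. For \ref{prop:Ring-Prop-1} you reach the same conclusion by a genuinely different filtration.

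The paper does not define the filtration through the PBW basis. Instead it introduces an auxiliary algebra $A$ freely generated by $\xb_1,\xb_2,\ub_1,\ub_2,\zeta,\gentilde^{\pm1}$ together with four extra symbols $Z_1,\dots,Z_4$ (subject only to $\gentilde^{\pm1}\gentilde^{\mp1}=1$), graded by $\deg\xb_1=\deg\ub_1=3$, $\deg\xb_2=\deg\ub_2=4$, $\deg\gentilde^{\pm1}=\pm1$, $\deg Z_1=\deg Z_2=\deg\zeta=1$, $\deg Z_3=\deg Z_4=2$. The surjection $A\twoheadrightarrow\widetilde{\cD}$ sending $Z_1,Z_2,Z_3,Z_4$ to $\xb_{21},\ub_{21},\xb_2^2,\ub_2^2$ then \emph{induces} a filtration on $\widetilde{\cD}$, so multiplicativity is automatic; what remains is to check that the defining relations of $\widetilde{\cD}$ force super-commutativity in $\gr$ and then compare Hilbert series. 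Your route trades that automatic multiplicativity for a simpler degree function (everything in $\cO(\mathfrak{G})$ in degree $0$, the rest in degree $1$), at the price of having to verify $\widetilde{\cD}_i\widetilde{\cD}_j\subseteq\widetilde{\cD}_{i+j}$ by hand---which you correctly flag as the main work.

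One caveat on your framing: the filtration you actually write down is \emph{not} the literal pullback $\pi^{-1}(U_k)$ of the PBW filtration on $U(\mathfrak{osp}(1|2))$. That pullback would put all of $\ker\pi$ in degree $0$ and give $\gr\widetilde{\cD}\simeq\gr U(\mathfrak{osp}(1|2))\simeq\ku[X_1,X_2,X_3]\otimes\Lambda(Y_1,Y_2)$, losing $T^{\pm1}$ and several variables. Your concrete PBW-monomial definition is finer than this pullback and is the one that works; just be careful not to conflate the two in the write-up.
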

\begin{proof}
 \ref{prop:Ring-Prop-1} Let $A$ be the algebra presented by generators $\xb_1$, $\xb_2$, $\ub_1$, $\ub_2$, $\zeta$, $\gentilde^{\pm1}$ and $Z_i$, $i \in \I_4$, with relations $\gentilde^{\pm 1} \gentilde^{\mp 1} = 1$. 
 The algebra  $A$ is graded with
 \begin{align*}
 \deg \xb_1 &= \deg \ub_1 = 3, &  \deg \xb_2 &= \deg \ub_2 = 4, & \deg \gentilde^{\pm 1} &= \pm 1,\\
 \deg Z_1 &= \deg Z_2 = \deg \zeta = 1, & \deg Z_3 &= \deg Z_4 = 2. 
 \end{align*}
 The filtration associated to this grading induces a filtration on $\widetilde{\cD}$ via the epimorphism $A\twoheadrightarrow\widetilde{\cD}$ given by
\begin{align*}
Z_1 &\mapsto \xb_{21}, & Z_2 &\mapsto \ub_{21}, & Z_3 &\mapsto \xb_2^2, & Z_4 &\mapsto \ub_2^2,
\end{align*}
the remaining generators being mapped to their homonyms. 
The relations of $\widetilde{\cD}$ imply that $\gr \widetilde{\cD}$ is super conmutative with the same parity as $\widetilde{\cD}$. Then $\phi\colon \ku[X_1,\dots,X_5,T^{\pm 1}] \ot \Lambda(Y_1,\dots,Y_4) \rightarrow \gr \widetilde{\cD}$ given by $X_i\mapsto Z_i$, $i \in \I_4$, and
\begin{align*}
T&\mapsto \gentilde, & Y_1&\mapsto \xb_1, & Y_2&\mapsto \xb_2, & Y_3&\mapsto \ub_1, & Y_4&\mapsto \ub_2, & X_5&\mapsto \zeta
\end{align*}
is an isomorphism of algebras by comparison of the Hilbert series.

\ref{prop:Ring-Prop-2} It is well-known that $\ku[X_1,\dots,X_5,T^{\pm 1}] \ot \Lambda(Y_1,\dots,Y_4)$ is noetherian,
hence so is $\widetilde{\cD}$ by
\ref{prop:Ring-Prop-1} and a fortiori $\hopfdoble$ which is a finitely generated $\widetilde{\cD}$-module.
\ref{prop:Ring-Prop-3} follows from Theorem \ref{thm:Z}, Proposition \ref{prop:cD-as-Hopf-superalgebra} and \cite[Corollary 1.13]{McRob}. 
\end{proof}

\section{The double of the restricted super Jordan plane} \label{sec:double-restricted-superjordan}
In this Section, $\car \ku  = p > 2$. 

\subsection{The bosonizations}
Recall that $\polring_p = \ku[\zeta]/(\zeta^p- \zeta)$ is a Hopf algebra
with $\zeta$ primitive.
Besides \eqref{eq:generators-groups} we also fix the generators 
\begin{align}\label{eq:generators-groups-p}
C_p &= \langle g \rangle, & C_{2p} &= \langle \gentwop \rangle.
\end{align}
It is well-known that $\ku^{C_p} \simeq \polring_p$, see e.g. \cite[1.3]{ap}. 
Hence
$\ku^{C_{2p}} \simeq \polring_p \ot \ku C_2$ and
the algebra $\ku^{C_{2p}}$ is presented  by generators $\genpi$ and $\zeta$ with relations
\begin{align}\label{eq:relations-super-k^gamma}
\genpi^2 &= 1, 
\\\label{eq:relaciones-K-cursiva}
\zeta^p &= \zeta, 
\\\label{eq:relations-super-k^gamma2}
 \genpi \zeta &= \zeta \genpi.
\end{align}

We consider the realizations of $V$ in $\customyd{\ku C_{2p}}$ and $W$ in $\customyd{\ku^{C_{2p}}}$
given by
\begin{align}\label{eq:V_as_YD_module}
&\begin{aligned}
\gentwop\rightharpoonup x_1 &= -x_1, & \gentwop\rightharpoonup x_2 &= -x_2 + x_1,&
\delta(x_i) &= \gentwop\ot x_i,& i=1,2;
\end{aligned}
\\
\label{eq:W_as_YD_module}
&\begin{aligned}
\zeta\rightharpoonup u_i &= u_i,& \genpi\rightharpoonup u_i &= -u_i,& i &=1,2,
\\
\delta(u_1) &= \genpi\ot u_1, & \delta(u_2) &= \genpi\ot u_2-\zeta\genpi\ot u_1, &&
\end{aligned}
\end{align}
Thus we have the Hopf algebras 
\begin{align*}
H &\coloneqq \toba(V)\#\ku C_{2p}, & K &\coloneqq \toba(W) \#\ku^{C_{2p}}.
\end{align*}
They have PBW-basis, denoted by $\mathsf{B}_{H}$ and $\mathsf{B}_{K}$,  given by the ordered monomials \eqref{eq:monomials:jordan}
or \eqref{eq:monomials:jordan-dual} with  $(n_1, n_{21}, n_2) \in\I_{0, 1} \times \N_{0} \times \N_{0}$,
multiplied accordingly by elements of the groups $C_p$, $C_{2p}$, or powers of $\zeta$. Then
\begin{align*}
\dim H &= \dim K = 8p^3.
\end{align*}
The comultiplications  are given by $x_1, x_2 \in \Pc_{\gamma, 1}(H)$,
$u_1 \in \Pc_{\epsilon, 1}(K)$ and $\Delta_{K}(\ub_2)$ by \eqref{eq:comultiplication-Hb1} 
Therefore there are surjective Hopf algebra maps
\begin{align*}
\hopfuno &\twoheadrightarrow H, & \hopfdos &\twoheadrightarrow K.
\end{align*}
The basic properties of $H$ and $K\simeq H^*$ follow without difficulties.

\subsection{The double} 
In this subsection we show that the Drinfeld double  of $H$ fits into an exact sequence of Hopf algebras
$\ku C_2 \hookrightarrow D(H) \twoheadrightarrow D$.

\medbreak
We first give a presentation of $D(H)$; for this we need that of $D(\ku C_{2p})$
which follows easily since $D(\ku C_{2p})\simeq \ku C_{2p}\ot \ku^{C_{2p}} \simeq \ku C_{2p}\ot \polring_p \ot \ku C_2$.

\begin{prop}\label{prop:double-H}
$D(H)$ is generated by  $x_1, x_2, \zeta, \genpi, \gentwop, u_1, u_2$ 
with relations \eqref{eq:def-super-jordan}, \eqref{eq:def-super-jordan-restricted}, \eqref{eq:relations-H-dual2}, \eqref{eq:genpi-xb}, \eqref{eq:zeta-xi}, \eqref{eq:relations-super-k^gamma}, \eqref{eq:relaciones-K-cursiva}, \eqref{eq:relations-super-k^gamma2} and
\begin{align}
\label{relations-super-double-kgamma}
\gentwop^{2p} &= 1, & \genpi \gentwop &= \gentwop \genpi, \quad \zeta \gentwop = \gentwop \zeta,
\\\label{eq:relations-H}
\gentwop x_1  &= -x_1\gentwop, & \gentwop x_2 &= (-x_2 + x_1) \gentwop, 
\\
\label{eq:relations-H-dual-op}
u_1^2 &= 0,  &u_2 u_{21} &= u_{21} u_2 + u_1 u_{21},
\\
\label{eq:relations-H-dual3}
u_i \zeta  &= \zeta u_i  + u_i, & i  &=  1, 2.
\\
\label{eq:relation-DH3}
u_1 x_1 &= -x_1 u_1, & u_1 x_2 &= -x_2 u_1 +(1-\gentwop \genpi),
\\ \label{eq:relation-DH4}
u_2 x_1 &= -x_1 u_2 +(1-\gentwop\genpi) + x_1 u_1, &  u_2 x_2 &= -x_2 u_2 +\gentwop \zeta \genpi + x_2 u_1, 
\\ \label{eq:relation-DH5}
u_1 \gentwop &= -\gentwop u_1, &
u_2 \gentwop &=-\gentwop u_2 + \gentwop u_1. 
\end{align} 
The comultiplication is determined by $\gentwop, \genpi \in G(D(H))$, $\zeta \in \Pc(D(H))$, 
$x_1, x_2 \in \Pc_{\gamma, 1}(D(H))$, $u_1 \in \Pc_{\genpi, 1}(D(H))$ and \eqref{eq:comultiplication-Hb1}. The monomials
\begin{align*}
x_1^{n_1} x_{21}^{n_{21}} x_2^{n_2} \gentwop^i \genpi^j \zeta^k u_1^{m_1} u_{21}^{m_{21}} u_2^{m_2}
\end{align*}
with $(n_1,m_1,j,n_{21},k,m_{21},i,n_2,m_2)\in \I_{0, 1}^3\times \I_{0, p-1}^3\times\I_{0, 2p-1}^3$ is a PBW-basis of $D(H)$.
\end{prop}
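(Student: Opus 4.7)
The plan is to apply Proposition \ref{prop:the-double-in-general} to the bosonization $A = H = \toba(V)\#\ku C_{2p}$, with $L = \ku C_{2p}$ and $\cV = V$. By the discussion following \eqref{eq:W_as_YD_module}, $H^* \simeq K = \toba(W)\# \ku^{C_{2p}}$, and $\ku^{C_{2p}} \simeq \polring_p \otimes \ku C_2$, giving the group-like/central generators $\genpi, \zeta$ of $D(\ku C_{2p}) \simeq \ku C_{2p} \otimes \ku^{C_{2p}}$. The presentations of the three tensor factors in the triangular decomposition \eqref{eq:triag-decomp-in-general} are already known: $\toba(V) = \cB / \langle x_{21}^p, x_2^{2p}\rangle$, $\toba(W)^{\operatorname{op}}$ is the restricted dual super Jordan plane from Lemma \ref{lema:dual-jordan}\ref{item:Jordandual-restricted} (with the opposite multiplication producing the sign in \eqref{eq:relations-H-dual-op}), and $D(\ku C_{2p})$ contributes the relations \eqref{eq:relations-super-k^gamma}, \eqref{eq:relaciones-K-cursiva}, \eqref{eq:relations-super-k^gamma2}, \eqref{relations-super-double-kgamma}.

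Next I would extract the cross relations from \eqref{eq:relations-doble-general}. The first family, $s'_i v_j = \langle (s'_i)_{(1)}, (v_j)_{(-1)} \rangle (v_j)_{(0)} (s'_i)_{(2)}$, applied to the pairs $(\zeta, x_i)$ and $(\genpi, x_i)$, produces \eqref{eq:zeta-xi} and \eqref{eq:genpi-xb} using the coaction $\delta(x_i) = \gentwop \otimes x_i$ from \eqref{eq:V_as_YD_module}. The second family, $w_i s_j = \sum_\ell \langle w_i, (s_j)_{(1)}\rightharpoonup v_\ell\rangle (s_j)_{(2)} w_\ell$, applied to the pair $(u_i, \gentwop)$, yields \eqref{eq:relations-H} and \eqref{eq:relation-DH5} after evaluating the action of $\gentwop$ on $V$. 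Similarly, the commutation of $u_i$ with $\zeta$ in \eqref{eq:relations-H-dual3} comes from the $\polring_p$-coaction on $W$ as encoded in \eqref{eq:W_as_YD_module}.

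The main obstacle is the last family of relations \eqref{eq:relations-doble-general}, $w_i v_j = \langle w_i, v_j\rangle + \ldots$, which produces the most delicate identities \eqref{eq:relation-DH3} and \eqref{eq:relation-DH4}. Here one has to track three terms: the scalar pairing $\langle u_i, x_j\rangle = 1 - \delta_{ij}$; a contribution coming from the coaction of $V$; and a correction from the antipode evaluation $\langle \overline{w}_t, \Ss((v_j)_{(0)})\rangle$ that pairs $u_t$ with $\Ss(x_j)$. The mild braiding of $V$ (where $\gentwop \rightharpoonup x_2 = -x_2 + x_1$) is what produces the extra $x_1 u_1$ terms on the right-hand sides of \eqref{eq:relation-DH4}. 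This step is a careful but finite computation given the explicit basis $\{x_1,x_2\}$ and the identification $\{u_1,u_2\}$ of the dual basis.

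Finally, the triangular decomposition \eqref{eq:triag-decomp-in-general} combined with the established PBW-bases of $\toba(V)$ (Lemma \ref{lema:dual-jordan}(c)), $\toba(W)^{\operatorname{op}}$ (Lemma \ref{lema:dual-jordan}\ref{item:Jordandual-restricted}) and of $D(\ku C_{2p})$ (a product of $\gentwop^i, \genpi^j, \zeta^k$ with the indicated ranges) immediately gives the asserted PBW-basis of $D(H)$; in particular $\dim D(H) = (\dim H)^2 = 64 p^6$ matches the count of monomials. The comultiplication formulas are inherited directly from those of $H$ and $H^{*\operatorname{op}}$ in the double, using that $\zeta$ is primitive in $\polring_p$ and $u_1$, $u_2$ satisfy \eqref{eq:comultiplication-Hb1} in $K$.
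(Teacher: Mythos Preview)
Your proposal is correct and follows exactly the paper's approach: the paper's proof is the single sentence ``This is a direct application of Proposition \ref{prop:the-double-in-general}'', and you have simply unpacked that application. A couple of minor attributions are slightly off (e.g.\ \eqref{eq:relations-H-dual3} comes from the \emph{action} of $\zeta$ on $W$ in the bosonization $K$, not from the coaction; and \eqref{eq:relations-H} is already a relation in $H$ rather than a cross relation of the double), but these do not affect the argument.
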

\begin{proof}
This is a direct application of  Proposition \ref{prop:the-double-in-general}.
\end{proof}

Let $t \coloneqq \gentwop^p \genpi$ and $g \coloneqq \gentwop^{p+1}$. The Hopf subalgebra generated by $t$ is isomorphic to $\ku C_2$ and the one generated by $g$ to $\ku C_p$. By the defining relations $t$ is a central element. 
Let $Z_0 = \ku \langle t \rangle$ and $D \coloneqq D(H)/D(H)Z_0^+$. We use the same symbol for an element in $D(H)$ and its class in $D$.
\begin{prop}\label{prop:definition-of-cD}
\begin{enumerate}[leftmargin=*,label=\rm{(\alph*)}]
\item\label{af: algebra-D-1} The  algebra $D$ is generated by $x_1, x_2, g, \zeta, u_1, u_2, \genpi$ with relations  \eqref{eq:def-super-jordan}, \eqref{eq:def-super-jordan-restricted}, \eqref{eq:relations-H-dual2}, \eqref{eq:genpi-xb}, \eqref{eq:zeta-xi}, \eqref{eq:relations-H-dual-op}, \eqref{eq:relations-H-dual3}, \eqref{eq:relations-super-k^gamma}, \eqref{eq:relaciones-K-cursiva}, \eqref{eq:relations-super-k^gamma2}
\begin{align}\label{eq:relations-D1}
 \zeta g &= g \zeta, & g^{p} &= 1,
\\
\label{eq:relations-sH}
gx_1  &= x_1g, & g x_2 &=  (x_2  - x_1) g,
\\\label{eq:relations-D3}
u_2 g &= g u_2 - g u_1, & u_1 g &= g u_1, 
\\\label{eq:relations-D4}
u_1 x_2 &= -x_2 u_1 +(1-g), &  u_2 x_2 &= -x_2 u_2 +g \zeta + x_2 u_1, & & 
\\ \label{eq:relations-D5}
u_2 x_1 &= -x_1 u_2 +(1-g) + x_1 u_1, & u_1 x_1 &= -x_1 u_1, &  \\\label{eq:relations-D6}
\genpi g&= g \genpi.
\end{align}

\item\label{af: algebra-D-2} The sequence of Hopf algebras $Z_0 := \ku C_2 \hookrightarrow D(H) \twoheadrightarrow D$
is exact.

\item\label{af: algebra-D-3}The algebra $D$ has dimension $32 p^6$ and basis consisting in the monomials
\begin{align*}
x_1^{n_1} x_{21}^{n_{21}} x_2^{n_2} g^n \zeta^m u_1^{m_1} u_{21}^{m_{21}} u_2^{m_2} \genpi^k 
\end{align*}
with $(n_1,m_1,k,n_{21},m,m_{21},n,n_2,m_2)\in \I_{0, 1}^3\times \I_{0, p-1}^4\times\I_{0, 2p-1}^2$
\end{enumerate}
\end{prop}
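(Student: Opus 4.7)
My plan is to exploit that $t = \gentwop^p \genpi$ generates a central Hopf subalgebra $Z_0 \simeq \ku C_2$ in $D(H)$, deduce part (b) from Remark \ref{remark-exact-sequence-hopf}, and then transfer the PBW basis of Proposition \ref{prop:double-H} through the quotient to obtain (c) and the presentation (a).

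First I verify that $t$ is central; trivially it commutes with $\gentwop, \genpi, \zeta$, and $t^2 = 1$ makes $Z_0 \simeq \ku C_2$. For $x_1$ the relation $\gentwop x_1 = -x_1 \gentwop$ iterates to $\gentwop^p x_1 = (-1)^p x_1 \gentwop^p = -x_1 \gentwop^p$ since $p$ is odd; combined with $\genpi x_1 = -x_1 \genpi$ this gives $t x_1 = x_1 t$. For $x_2$, induction on \eqref{eq:relations-H} yields
\begin{align*}
\gentwop^n x_2 = \bigl((-1)^n x_2 + (-1)^{n+1} n\, x_1\bigr)\gentwop^n,
\end{align*}
and at $n = p$ the Jordan tail $n x_1$ vanishes because $p \equiv 0$ in $\ku$. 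The analogous induction on \eqref{eq:relation-DH5} settles $t u_i = u_i t$.

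With $Z_0$ central, part (b) follows from Remark \ref{remark-exact-sequence-hopf}: items (i)--(iii) hold by construction of $D := D(H)/D(H)Z_0^+$, stability under the adjoint action is automatic for a central subalgebra, and $Z_0 \hookrightarrow D(H)$ is faithfully flat by the Nichols--Zoeller theorem. Faithful flatness also gives $\dim D = \dim D(H)/2 = 32 p^6$, proving the dimension assertion in (c).

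For the basis and presentation, note that modulo $t = 1$ one has $\gentwop^p = \genpi$, hence $\gentwop = \gentwop^{p+1}\gentwop^{-p} = g\genpi$; substituting into the PBW basis of $D(H)$ and reducing with $g^p = 1$ and $\genpi^2 = 1$ yields the monomials displayed in (c). Let $A$ be the algebra presented by the listed generators and relations. Each relation of $A$ is obtained from a relation of $D(H)$ by the substitution $\gentwop = g\genpi$ together with $\gentwop\genpi = g$: for instance \eqref{eq:relations-H} becomes \eqref{eq:relations-sH} after conjugating by $\genpi$, and $1 - \gentwop\genpi$, $\gentwop\zeta\genpi$ collapse to $1 - g$, $g\zeta$ respectively. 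This gives a surjective algebra map $A \twoheadrightarrow D$; arguing as in Proposition \ref{prop:presentation-tilde-D-0} the displayed monomials span $A$, and since their images are linearly independent in $D$ they are linearly independent in $A$, forcing $A \simeq D$. The main obstacle is the centrality computation in Step~1, where the Jordan tail in $\gentwop^n x_2$ must die modulo $p$; after that, only bookkeeping remains.
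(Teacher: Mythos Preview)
Your proof is correct and follows essentially the same route as the paper's. The only notable difference is in justifying faithful flatness for part \ref{af: algebra-D-2}: the paper rewrites the PBW basis of $D(H)$ replacing the factor $\genpi^j$ by $\genpi^k(1-t)^\ell$ with $k,\ell\in\I_{0,1}$, which exhibits $D(H)$ as a free $Z_0$-module directly (and simultaneously identifies $\ker\pi$ and the basis of the quotient), whereas you invoke Nichols--Zoeller. Both arguments then finish via Remark \ref{remark-exact-sequence-hopf}, and your dimension count plus the spanning argument for \ref{af: algebra-D-1} and \ref{af: algebra-D-3} matches the paper's ``as in Proposition \ref{prop:presentation-tilde-D-0}''.
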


\begin{proof}
\ref{af: algebra-D-1}: Very similar to the proof of Proposition \ref{prop:presentation-tilde-D-0}.
\ref{af: algebra-D-2} Conditions \ref{suc-exacta-1} and \ref{suc-exacta-2} above
are evident while \ref{suc-exacta-3} is proved considering the basis of $D(H)$
\begin{align*}
x_1^{n_1} x_{21}^{n_{21}} x_2^{n_2} g^n \zeta^m u_1^{m_1} u_{21}^{m_{21}} u_2^{m_2} \genpi^k (1-t)^\ell 
\end{align*}
with $(n_1,m_1,\ell,k,n,m,n_{21},m_{21},n_2,m_2)\in\I_{0, 1}^4\times\I_{0, p-1}^4\times\I_{0, 2p-1}^2$.
Now  Remark \ref{remark-exact-sequence-hopf} gives \ref{suc-exacta-4} since $Z_0$ is central. 
The proof of \ref{af: algebra-D-3} is direct. 
\end{proof} 

\begin{remark}\label{rem:D-doble} 
	There is an exact sequence of Hopf algebras
	$\cO(\Bb) \xhookrightarrow[]{} \hopfdoble \xrightarrowdbl[]{} D$.
\end{remark}

\subsection{The Hopf superalgebra $\cD$}
Here we show that 
$D$ is the bosonization of the Hopf superalgebra
$\cD := \ku \langle x_1, x_2, u_1, u_2, g,\zeta\rangle \hookrightarrow D$ and 
that $\cD$ is a restricted analogue of the Hopf superalgebra $\widetilde{\cD}$.

\begin{prop}\label{prop:cD-as-Hopf-superalgebra}
\begin{enumerate}[leftmargin=*,label=\rm{(\roman*)}]
\item \label{prop:propiedades-cD-5} A PBW-basis of $\cD$ is given by the monomials
\begin{align*}
x_1^{n_1} x_{21}^{n_{21}} x_2^{n_2} g^n \zeta^m u_1^{m_1} u_{21}^{m_{21}} u_2^{m_2},
\end{align*}
\end{enumerate}
\noindent $(n_1,m_1,n,m,n_{21},m_{21},n_2,m_2)\in\I_{0, 1}^2\times\I_{0, p-1}^4\times\I_{0, 2p-1}^2$. Thus $\dim \cD =16 p^6$.

\medbreak\begin{enumerate}[leftmargin=*,label=\rm{(\roman*)},resume]
\item\label{prop:propiedades-cD-6} $\cD$ has a triangular decomposition i.e. a linear isomorphism induced by multiplication  
\begin{align}\label{eq:D-cursiva-triang-decomp}
\cD \simeq \cB(V) \ot (\ku C_p\ot \polring_p) \ot \cB(W)^{\operatorname{op}}.
\end{align}

\medbreak
\item\label{prop:propiedades-cD-4} $\cD$ is generated by $x_1$, $x_2$, $u_1$, $u_2$, $g$, $\zeta$ with relations \eqref{eq:def-super-jordan}, \eqref{eq:def-super-jordan-restricted}, \eqref{eq:relations-H-dual2}, \eqref{eq:zeta-xi}, \eqref{eq:relaciones-K-cursiva}, \eqref{eq:relations-sH},  \eqref{eq:relations-H-dual-op}, \eqref{eq:relations-H-dual3}, \eqref{eq:relations-D1}, \eqref{eq:relations-D3}, \eqref{eq:relations-D4}, \eqref{eq:relations-D5}.

\medbreak
\item\label{prop:propiedades-cD-1} $\cD$ is a Hopf superalgebra with comultiplication 
given by $g \in G(\cD)$, $\zeta, u_1 \in \Pc(\cD)$, 
$x_1, x_2 \in \Pc_{g, 1}(\cD)$ and \eqref{eq:comultiplication-Dtilde}.
Indeed $\cD\# \ku C_2 \simeq D$ as Hopf algebras. 

\medbreak
\item\label{prop:propiedades-cD-2} There is an exact sequence of Hopf superalgebras
\begin{align*}
\cO(\Bb) \xhookrightarrow[]{\iota} \widetilde{\cD}  \xrightarrowdbl[]{\pi} \cD.
\end{align*}
\end{enumerate}
\end{prop}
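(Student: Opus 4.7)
The plan is to prove the five assertions in the order \textbf{(iv)}, \textbf{(i)}, \textbf{(ii)}, \textbf{(iii)}, \textbf{(v)}, in each case mirroring the corresponding argument already carried out for $\widetilde{\cD}$ in Proposition \ref{prop:Db} and Theorem \ref{thm:Z}.

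For \textbf{(iv)}, I would argue exactly as in the proof of Proposition \ref{prop:Db}\ref{prop:Db-3}. Namely, identify $\ku C_2$ with the subalgebra of $D$ generated by $\genpi$, via $\iota\colon \ku C_2 \hookrightarrow D$; define a Hopf algebra projection $\pi_0\colon D \twoheadrightarrow \ku C_2$ by sending $x_1, x_2, u_1, u_2, \zeta \mapsto 0$, $g\mapsto 1$ and $\genpi \mapsto \genpi$. Checking compatibility with the relations \eqref{eq:def-super-jordan}--\eqref{eq:relations-D6} is routine. Since $\pi_0\iota = \id$, Radford's theorem yields $D \simeq D^{\operatorname{co}\pi_0}\#\ku C_2$. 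The generators $x_1,x_2,u_1,u_2,g,\zeta$ all lie in $\cD$, and it is straightforward to equip $\cD$ with the claimed Hopf superalgebra structure (with the given $\Z/2$-grading); the formula \eqref{eq:comultiplication-Dtilde} for $\Delta_\cD(u_2)$ is then forced by the super bosonization recipe recalled in \S\ref{subsub:brief-YDsuper}, and compatibility with the comultiplication of $D$ restricted to $\cD$ can be read off from \eqref{eq:comultiplication-Hb1}.

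Granting \textbf{(iv)}, the PBW-basis in \textbf{(i)} follows immediately from the basis of $D$ listed in Proposition \ref{prop:definition-of-cD}\ref{af: algebra-D-3} by deleting the $\genpi^k$ factor: the resulting $16p^6$ monomials span $\cD \simeq D^{\operatorname{co}\pi_0}$ and are linearly independent in $D$. Part \textbf{(ii)} is then an immediate rearrangement of this basis. For \textbf{(iii)}, I would argue in parallel with Proposition \ref{prop:Db}\ref{prop:Db-2}: let $A$ be the algebra presented by the generators and relations listed; the relations are visibly satisfied in $\cD$ so we get a surjection $\phi\colon A\twoheadrightarrow \cD$. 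Using the commutation relations one rewrites any word in the generators as a linear combination of ordered monomials of the same shape as in \textbf{(i)}, so these span $A$; since $\phi$ sends them bijectively onto the PBW-basis of $\cD$, $\phi$ is an isomorphism.

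Finally, for \textbf{(v)}, I would define $\pi\colon \widetilde{\cD}\twoheadrightarrow \cD$ on generators by $\xb_i\mapsto x_i$, $\ub_i\mapsto u_i$, $\gentilde \mapsto g$, $\zeta\mapsto \zeta$; well-definedness follows by checking the defining relations of $\widetilde{\cD}$ from Proposition \ref{prop:Db}\ref{prop:Db-2}, and surjectivity is evident. Under this map each generator of $Z^+$ is killed: $\xb_{21}^p, \xb_2^{2p}, \ub_{21}^p, \ub_2^{2p}$ by the restricted relations \eqref{eq:def-super-jordan-restricted}, \eqref{eq:relations-H-dual2}, and $\gentilde^p-1$, $\zeta^{(p)}=\zeta^p-\zeta$ by \eqref{eq:relations-D1}, \eqref{eq:relaciones-K-cursiva}. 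Hence $\pi$ factors through $\widetilde{\cD}/\widetilde{\cD}\iota(Z)^+$. The main point—and the step I expect to require a little care—is the converse inclusion $\ker\pi\subseteq \widetilde{\cD}\iota(Z)^+$: using Theorem \ref{thm:Z}\ref{Z-2}, which exhibits $\widetilde{\cD}$ as free of rank $16p^6$ over $Z$ with an explicit $Z$-basis of restricted monomials, the quotient $\widetilde{\cD}\otimes_Z \ku$ has dimension exactly $16p^6 = \dim\cD$ by \textbf{(i)}, so the induced surjection $\widetilde{\cD}/\widetilde{\cD}\iota(Z)^+\twoheadrightarrow \cD$ is an isomorphism. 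This settles conditions \ref{suc-exacta-1}, \ref{suc-exacta-2}, \ref{suc-exacta-3} of the definition of exactness, and \ref{suc-exacta-4} follows from Remark \ref{remark-exact-sequence-hopf} since $\iota(Z)$ is central (hence stable under the adjoint action) and the freeness of Theorem \ref{thm:Z}\ref{Z-2} guarantees faithful flatness.
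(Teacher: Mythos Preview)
Your proposal is correct and follows essentially the same approach as the paper; the only difference is cosmetic ordering. The paper proves \ref{prop:propiedades-cD-5}, \ref{prop:propiedades-cD-6}, \ref{prop:propiedades-cD-4} first by declaring them ``analogous to Proposition~\ref{prop:Db}'' and then handles \ref{prop:propiedades-cD-1} and \ref{prop:propiedades-cD-2} exactly as you do (same projection $D\twoheadrightarrow \ku C_2$, same map $\widetilde{\cD}\twoheadrightarrow\cD$, same appeal to centrality and freeness over $Z\simeq\cO(\Bb)$ via Remark~\ref{remark-exact-sequence-hopf}); your choice to establish \ref{prop:propiedades-cD-1} first and then read off the PBW-basis from that of $D$ is a harmless reordering of the same ideas.
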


\begin{proof}
\ref{prop:propiedades-cD-5}, \ref{prop:propiedades-cD-6}, \ref{prop:propiedades-cD-4} are analogous to the ones of
Proposition \ref{prop:Db}.
\ref{prop:propiedades-cD-1} As in Proposition \ref{prop:Db}, show that $\cD \simeq D^{\operatorname{co} \varpi}$
where $\varpi\colon D \twoheadrightarrow \ku C_2$ is given by
\begin{align*}
x_1&\mapsto 0, & x_2&\mapsto 0, & u_1&\mapsto 0, & u_2&\mapsto 0, & g&\mapsto 1, & \zeta&\mapsto 0, & \genpi &\mapsto \genpi.
\end{align*}

\ref{prop:propiedades-cD-2} By the presentations of $\widetilde{\cD}$ and $\cD$ there exist $\pi\colon \widetilde{\cD}\twoheadrightarrow \cD$ such that
\begin{align*}
\gen &\mapsto g, & \zeta &\mapsto\zeta, & \xb_i&\mapsto x_i, & \ub_i&\mapsto u_i, & i&=1,2.
\end{align*}
The map $\iota\colon \cO(\Bb) \hookrightarrow \widetilde{\cD}$ is defined as the $\phi$ in the proof of Theorem \ref{thm:Z}. Clearly 
$\ker \pi =\widetilde{\cD}\iota(\cO(\Bb))^+$ by the PBW bases. Since $\iota(\cO(\Bb))$ is central and $\widetilde{\cD}$ is a free module over $\iota(\cO(\Bb))$, the claim follows from Remark \ref{remark-exact-sequence-hopf}.
\end{proof}

\begin{remark}
We may realize $V$ in $\customyds{\ku C_p}$ and $W$ 
in $\customyds{\polring_p}$ by
\begin{align*}
&\begin{aligned}
g\rightharpoonup x_1 &= x_1, \ g\rightharpoonup x_2 = x_2 - x_1, \
\delta(x_i) = g \ot x_i, \ |x_i| = 1, \ i=1,2;
\end{aligned}
\\ 
&\begin{aligned}
\zeta\rightharpoonup\ub_i &= \ub_i,& |u_i| &= 1,& i &=1,2,
\\
\delta(\ub_1) &= 1\ot \ub_1, & \delta(\ub_2) &= 1\ot \ub_2 - \zeta\ot \ub_1.&&
\end{aligned}
\end{align*}
Thus we have the Hopf superalgebras
$\cH \coloneqq \toba(V) \supbos \ku C_{p}$, $\cK \coloneqq \toba(W) \supbos \polring_p$; clearly  
$\dim \cH = \dim \cK = 4p^3$.
Their comultiplications  are determined by $x_1, x_2 \in \Pc_{g, 1}(\cH)$,
$u_1\in \Pc(\cK)$, while $\Delta_{\cK}(\ub_2)$ is given by \eqref{eq:comultiplication-Dtilde}.
Therefore there are surjective Hopf algebra maps
$\widetilde{\cH} \twoheadrightarrow \cH$, $\widetilde{\cK}\twoheadrightarrow \cK$.
The basic properties of $\cH$ and $\cK$ follow at once.
Also, there are isomorphisms of Hopf algebras
$H \simeq  \cH \#\ku C_2$, $K \simeq \cK \# \ku C_2$.
Finally, there are morphisms of Hopf superalgebras  $\cH\hookrightarrow \cD$, $\cK^{\operatorname{op}}\hookrightarrow \cD$,
hence $\cD \simeq D(\cH)$, see \cite{gzb}.
\end{remark}

\subsection{$\cD$ as an extension}
We next show that $\cD$ fits into an exact sequence of Hopf superalgebras
$\nucleo \hookrightarrow \cD \twoheadrightarrow \ugo$
with $\nucleo$  super commutative  and $\ugo$ super cocommutative.
First let $\nucleo$ be the super commutative  Hopf superalgebra
\begin{align*}
\nucleo \coloneqq \ku[X_1, X_2, T]/(X_1^p, X_2^p, T^p-1) \ot \Lambda(Y_1,Y_2)
\end{align*} 
with $|X_1| = |X_2| = |T| = 0$, $|Y_1| = |Y_2| = 1$ and comultiplication \eqref{eq:comultiplication-of-super-G}. 
Arguing as in Proposition \ref{eq:Db-exact}, we have:
\begin{theorem}\label{prop:cD-as-super-abelian-extension}
There exist Hopf superalgebra maps $\iota$ and $\pi$ such that  
\begin{align}\label{eq:cD-exact}
\nucleo \xhookrightarrow[]{\iota} \cD  \xrightarrowdbl[]{\pi} \ugo(\mathfrak{osp}(1|2))
\end{align}
is an exact sequence of Hopf superalgebras. \qed
\end{theorem}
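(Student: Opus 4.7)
The strategy is to imitate closely the proof of Theorem \ref{prop:Db-exact}, replacing $\cO(\mathfrak{G})$ by its restricted quotient $\nucleo$ and $U(\mathfrak{osp}(1|2))$ by $\ugo(\mathfrak{osp}(1|2))$, and exploiting the additional truncations $x_{21}^p = x_2^{2p} = u_{21}^p = u_2^{2p} = 0$, $\zeta^p = \zeta$, $g^p = 1$ that are available in $\cD$ but not in $\widetilde{\cD}$.

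First I would define $\iota\colon \nucleo \to \cD$ on generators by $Y_1 \mapsto x_1$, $Y_2 \mapsto u_1$, $X_1 \mapsto x_{21}$, $X_2 \mapsto u_{21}$, $T \mapsto g$, and verify that this respects the defining relations. The exterior relations $Y_i^2 = 0$ and $Y_1 Y_2 + Y_2 Y_1 = 0$ correspond to $x_1^2 = 0$, $u_1^2 = 0$ from \eqref{eq:def-super-jordan} and \eqref{eq:relations-H-dual}, together with $u_1 x_1 = -x_1 u_1$ from \eqref{eq:relations-D5}; the truncations $X_i^p = 0$ come from \eqref{eq:def-super-jordan-restricted} and \eqref{eq:relations-H-dual2}, and $T^p = 1$ is \eqref{eq:relations-D1}. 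The centrality of $X_1, X_2$ and $T$ with respect to the $Y_i$ and to each other, needed because $\nucleo$ is super commutative, follows from the commutation relations recorded in Lemma \ref{lemma:commutation-relations-Db}. Compatibility with the comultiplication \eqref{eq:comultiplication-of-super-G} is checked on generators, using the coproduct formulas for $x_{21}^n$ and $u_{21}^n$ derived in the proof of Theorem \ref{thm:Z}. Injectivity of $\iota$ is immediate from the PBW basis of $\cD$ in Proposition \ref{prop:cD-as-Hopf-superalgebra}\ref{prop:propiedades-cD-5}.

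Next I would define $\pi\colon \cD \to \ugo(\mathfrak{osp}(1|2))$ on generators by $x_1, u_1 \mapsto 0$, $x_2 \mapsto \psi_-$, $u_2 \mapsto \psi_+$, $g \mapsto 1$, $\zeta \mapsto -h$, and check each defining relation listed in Proposition \ref{prop:cD-as-Hopf-superalgebra}\ref{prop:propiedades-cD-4}. Because $\pi(x_{21}) = \pi(x_2 x_1 + x_1 x_2) = 0$ and similarly $\pi(u_{21}) = 0$, most relations collapse; the commutator relations \eqref{eq:zeta-xi}, \eqref{eq:relations-H-dual3}, \eqref{eq:relations-D4}, \eqref{eq:relations-D5} translate into $[h, \psi_\pm] = \pm \psi_\pm$ and $[\psi_+, \psi_-] = -h$, which hold in $\mathfrak{osp}(1|2)$. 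The restricted relations are where the $p$-structure comes in: $\pi(x_2^{2p}) = \psi_-^{2p} = (-f)^p = -f^{[p]} = 0$, $\pi(u_2^{2p}) = \psi_+^{2p} = e^{[p]} = 0$, $\pi(\zeta^p - \zeta) = -h^{[p]} + h = 0$ and $\pi(g^p - 1) = 0$. Coalgebra compatibility is verified on generators against \eqref{eq:comultiplication-Dtilde}.

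For exactness, conditions \ref{suc-exacta-1} and \ref{suc-exacta-2} are in place. The triangular decomposition \eqref{eq:D-cursiva-triang-decomp} exhibits $\cD$ as a free $\iota(\nucleo)$-module of rank $4p^3$ with linear complement spanned by the monomials $x_2^{n_2} \zeta^m u_2^{m_2}$ for $(n_2, m, m_2) \in \I_{0, 2p-1} \times \I_{0, p-1} \times \I_{0, 2p-1}$; the images $\psi_-^{n_2} (-h)^m \psi_+^{m_2}$ form a PBW basis of $\ugo(\mathfrak{osp}(1|2))$, using $\psi_-^2 = -f$, $\psi_+^2 = e$, $h^{[p]} = h$ and $e^{[p]} = f^{[p]} = 0$. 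Hence $\pi$ restricts to a linear bijection on this complement and \ref{suc-exacta-3} follows. The only genuinely delicate point---and the main obstacle---is \ref{suc-exacta-4}: for this I would invoke Remark \ref{remark-exact-sequence-hopf}, which requires stability of $\iota(\nucleo)$ under the left adjoint action of $\cD$ (to be verified on the generators of $\cD$ using the explicit relations above, analogously to the arguments for $\widetilde{\cD}$) and faithful flatness of $\cD$ over $\iota(\nucleo)$, which holds by freeness.
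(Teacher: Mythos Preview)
Your proposal is correct and follows exactly the approach the paper intends: the paper's own proof consists of the single sentence ``Arguing as in Proposition \ref{eq:Db-exact}'' followed by \qed, and you have faithfully unwound that reference, supplying the analogues of $\iota$, $\pi$, the relation checks (including the new truncations coming from the $p$-structure), and the PBW argument for \ref{suc-exacta-3}. Your explicit invocation of Remark \ref{remark-exact-sequence-hopf} for \ref{suc-exacta-4} is in fact slightly more careful than the paper's treatment of the parallel step in Theorem \ref{prop:Db-exact}.
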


The simple $\cD$-supermodules can be determined from the  previous result.

\begin{theorem}\label{th:irrep-restricted-super-jordan}
There are exactly $p$ isomorphism classes of simple $\cD$-mo\-dules which have dimensions $1,3,5,\dots,2p-1$. 
\end{theorem}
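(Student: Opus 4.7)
The plan is to reduce the problem to classifying simple modules over $\ugo(\mathfrak{osp}(1|2))$ via the exact sequence \eqref{eq:cD-exact}, and then apply standard highest-weight theory.

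First I would verify that $\nucleo$ is a local finite-dimensional algebra: indeed $X_1^p = X_2^p = 0$, the odd generators $Y_1, Y_2$ are square-zero, and in characteristic $p$ one has $T^p - 1 = (T-1)^p$, so $T - 1$ is nilpotent. Hence the augmentation ideal $\nucleo^+$ is nilpotent. Since \eqref{eq:cD-exact} is exact, $\iota(\nucleo)$ is a normal Hopf subalgebra of $\cD$, and $\iota(\nucleo^+)\cdot M$ is a $\cD$-submodule for any $\cD$-module $M$. Iterating, $\iota((\nucleo^+)^k)\cdot M$ is a descending chain of submodules that reaches zero for $k$ large. If $M$ is simple, this forces $\iota(\nucleo^+)\cdot M = 0$, so $M$ descends along $\pi$ to a simple $\ugo(\mathfrak{osp}(1|2))$-module; conversely every simple $\ugo(\mathfrak{osp}(1|2))$-module pulls back. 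Hence simple $\cD$-modules are in bijection with simple $\ugo(\mathfrak{osp}(1|2))$-modules.

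To classify simple $\ugo(\mathfrak{osp}(1|2))$-modules, let $\mathfrak b^+ = \ku\{h, e, \psi_+\}$ be the positive Borel. For each $\lambda \in \fp$ define the Verma module $M(\lambda) \coloneqq \ugo(\mathfrak{osp}(1|2)) \otimes_{\ugo(\mathfrak b^+)} \ku_\lambda$, where $\ku_\lambda = \ku v_\lambda$ carries the action $h\cdot v_\lambda = \lambda v_\lambda$ and $e\cdot v_\lambda = \psi_+\cdot v_\lambda = 0$. Since $\psi_-^2 = -f$ and $f^p = 0$, the set $\{\psi_-^k v_\lambda : 0 \le k \le 2p - 1\}$ is a basis, so $\dim M(\lambda) = 2p$. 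An induction on $k$ using $\psi_+\psi_- + \psi_-\psi_+ = -h$ and $[h, \psi_-] = -\psi_-$ gives
\[
\psi_+ \cdot \psi_-^{2k} v_\lambda = k\, \psi_-^{2k-1} v_\lambda, \qquad \psi_+ \cdot \psi_-^{2k+1} v_\lambda = (k - \lambda)\, \psi_-^{2k} v_\lambda.
\]
Writing $\fp = \{0, 1, \dots, p-1\}$, the vector $\psi_-^{2\lambda + 1} v_\lambda$ is annihilated by $\psi_+$ (hence by $e = \psi_+^2$) and has $h$-weight $-\lambda - 1$; it generates a proper submodule $N(\lambda)$ of dimension $2p - 2\lambda - 1$, which is maximal because every proper submodule is a sum of weight spaces and, by the nonvanishing of the coefficients above for $1 \le j \le 2\lambda$, cannot contain any $\psi_-^j v_\lambda$ with $j \le 2\lambda$. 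Therefore $L(\lambda) \coloneqq M(\lambda) / N(\lambda)$ is simple of dimension $2\lambda + 1$. Since $h^p = h$, $h$ acts diagonalizably with eigenvalues in $\fp$ on every $\ugo(\mathfrak{osp}(1|2))$-module, so every simple module admits a highest-weight vector and is isomorphic to some $L(\lambda)$. This gives exactly $p$ isomorphism classes of dimensions $1, 3, 5, \dots, 2p - 1$.

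The main obstacle is the explicit inductive computation of the action of $\psi_+$ on $\psi_-^k v_\lambda$, in particular checking that $\psi_-^{2\lambda + 1} v_\lambda$ is the first null vector. This is a routine but careful calculation requiring attention to the odd-even parity interlacing of the super structure; once in place, the count and the dimensions follow immediately.
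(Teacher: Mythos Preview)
Your reduction to $\ugo(\mathfrak{osp}(1|2))$ via the exact sequence \eqref{eq:cD-exact} is exactly the paper's approach: the paper also observes that $\cD\iota(\nucleo)^+$ is nilpotent, hence contained in the Jacobson radical, so $\Irr\cD\simeq\Irr\ugo(\mathfrak{osp}(1|2))$. For the classification of the latter the paper simply cites \cite[Prop.~6.3]{wz}, while you carry out the highest-weight computation directly; your formulas for $\psi_+\cdot\psi_-^k v_\lambda$ are correct and agree with the analogous computation the paper performs later (for $\cD$ itself) in \eqref{eq:action-Lk}.

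There is, however, a genuine gap in your maximality/simplicity argument. You write that every proper submodule of $M(\lambda)$ ``is a sum of weight spaces and \dots\ cannot contain any $\psi_-^j v_\lambda$ with $j\le 2\lambda$'', and conclude it lies in $N(\lambda)$. But the $h$-weight spaces of $M(\lambda)$ are two-dimensional (weights repeat modulo $p$ since $\dim M(\lambda)=2p$), and the same happens in $L(\lambda)$ whenever $2\lambda+1>p$; a submodule could a priori contain a combination $a\,\psi_-^j v_\lambda+b\,\psi_-^{j+p}v_\lambda$ without containing either basis vector separately, so your implication does not follow from the weight decomposition alone. The fix is easy and is what the paper does in Proposition~\ref{prop:irrep-cD}: in $L(\lambda)$ the operator $\psi_+$ is strictly upper triangular in the basis $(\bar\psi_-^{\,j}v_\lambda)_{0\le j\le 2\lambda}$ with nonzero superdiagonal, so $\ker\psi_+=\ku v_\lambda$; equivalently, for any $0\neq z=\sum_{j\le m}c_j\bar\psi_-^{\,j}v_\lambda$ with $c_m\neq 0$ one has $\psi_+^{\,m}z\in\ku^\times v_\lambda$. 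Either formulation shows immediately that $L(\lambda)$ is simple, without appealing to weight spaces.
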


\pf Being nilpotent, the  ideal $\cD\nucleo ^+ = \langle x_1, x_{21}, u_1, u_{21}, g-1 \rangle$
is contained in the Jacobson radical of $\cD$; thus $\Irr \cD \simeq \Irr \ugo(\mathfrak{osp}(1|2))$
and \cite[Prop. 6.3]{wz} applies.
\epf

\subsection{Simple modules}
We describe the simple $\cD$-modules
as quotients of Verma modules reproving Theorem \ref{th:irrep-restricted-super-jordan}. 
Let $\cD = \oplus_{n\in\Z}\cD^n$ be $\Z$-graded by 
\begin{align*}
\deg x_1 =\deg x_2 = -1, && \deg u_1 =\deg u_2 = 1, && \deg g = \deg \zeta = 0.
\end{align*}
Recall that $D(\ku C_p) \simeq \polring_p \otimes \ku C_p$.
Consider the triangular decomposition  \eqref{eq:D-cursiva-triang-decomp} 
and the  graded subalgebras
$\D^{> 0} \coloneqq \toba(W)^{\operatorname{op}}$, $\D^{< 0}\coloneqq \toba(V)$.
Then
\begin{enumerate}
\smallbreak\item $\D^{> 0}\subseteq\oplus_{n\in\N_0} \cD^n$, $\D^{< 0}\subseteq\oplus_{n\in -\N_0} \cD^n$ and $D(\ku C_p)\subseteq \cD^0$.

\smallbreak
\item $(\D^{> 0})^0 = \ku = (\D^{< 0})^0$.

\smallbreak
\item $\D^{\geq0} := D(\ku C_p)\D^{> 0}$ and $\D^{\leq0} := \D^{< 0}D(\ku C_p)$ are subalgebras of $\cD$.
\end{enumerate}

In this context the simple modules of $\cD$ arise inducing from $\D^{\geq0}$. 
The elements of $\Lambda := \Irr D(\ku C_p)$ are called \emph{weights}. 
Since $\D^{> 0}$ is local, the (homogeneous)
projection $\D^{\geq0} \twoheadrightarrow D(\ku C_p)$ allows to identify
$\Lambda \simeq \Irr \D^{\geq0}$.
The  Verma module associated to $\lambda\in\Lambda$ is
\begin{align*}
\verma(\lambda) = \Ind^{\cD}_{\D^{\geq0}}\lambda = \cD\ot_{\D^{\geq0}} \lambda.
\end{align*}
By a standard argument, $\verma(\lambda)$ is indecomposable. 
Let $L(\lambda)$ be the head of $\verma(\lambda)$. 
The following result is well-known, see for instance \cite[Theorem 2.1]{Vay}.

\begin{lemma} The map $\lambda \mapsto L(\lambda)$ gives a bijection $\Lambda \simeq \Irr \cD$.  \qed
\end{lemma}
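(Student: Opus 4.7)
My plan is the standard highest-weight argument for a graded algebra with triangular decomposition. The setup: by \eqref{eq:D-cursiva-triang-decomp}, multiplication gives $\verma(\lambda) \simeq \D^{<0} \otimes_\ku \lambda$ as graded vector spaces. Since $\D^{<0} = \toba(V)$ is finite-dimensional, concentrated in non-positive degrees with degree-zero part $\ku$, the Verma module $\verma(\lambda)$ is finite-dimensional, graded in non-positive degrees, with top piece $\verma(\lambda)^0 = 1 \otimes \lambda \simeq \lambda$. Moreover $\verma(\lambda)^0$ is a $\D^{\geq 0}$-submodule (since $\D^{\geq 0}$ has non-negative degrees and the module lives in non-positive degrees), on which $\D^{\geq 0}$ acts through the projection to $D(\ku C_p)$, recovering the $D(\ku C_p)$-module $\lambda$.

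For simplicity of $L(\lambda)$: any graded proper submodule of $\verma(\lambda)$ meets $\verma(\lambda)^0$ trivially, for otherwise simplicity of the $\D^{\geq 0}$-module $\lambda$ would force the submodule to contain $\lambda$ and hence all of $\cD \cdot \lambda = \verma(\lambda)$. The sum $N(\lambda)$ of all graded proper submodules is therefore itself graded, proper and unique-maximal, so $L(\lambda) := \verma(\lambda)/N(\lambda)$ is simple as a graded $\cD$-module; boundedness of the grading then promotes this to simplicity as an ungraded module. For surjectivity, given $S \in \Irr \cD$ (finite-dimensional since $\cD$ is), observe that $I := \bigoplus_{n > 0}(\D^{\geq 0})^n$ is a nilpotent two-sided graded ideal of $\D^{\geq 0}$ with $\D^{\geq 0}/I \simeq D(\ku C_p)$. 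Thus $S^I \neq 0$ is a $\D^{\geq 0}$-submodule through which $\D^{\geq 0}$ acts via $D(\ku C_p)$, and contains a simple $D(\ku C_p)$-submodule isomorphic to some $\lambda \in \Lambda$. Frobenius reciprocity then provides a nonzero $\cD$-map $\verma(\lambda) \to S$, which is surjective (by simplicity of $S$) with kernel contained in $N(\lambda)$ (being proper), giving $L(\lambda) \simeq S$.

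Injectivity is the delicate step and the main obstacle. The task is to recover $\lambda$ intrinsically from the isomorphism class of $L(\lambda)$, independently of any choice of grading. The standard device is to identify $\lambda$ with a canonical $D(\ku C_p)$-subquotient of $L(\lambda)$: for instance, the socle of $L(\lambda)^I$ as a $D(\ku C_p)$-module. The inclusion $\lambda \hookrightarrow L(\lambda)^I$ is immediate from the corresponding inclusion in $\verma(\lambda)$, and equality (of $\lambda$ with this socle) follows from the triangular decomposition together with simplicity of $L(\lambda)$. The full verification, together with the promotion from graded to ungraded simplicity used in the previous paragraph, is carried out as in \cite[Thm.~2.1]{Vay}.
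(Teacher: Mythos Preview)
Your proposal is correct and follows precisely the route the paper takes: the paper gives no argument at all, simply recording the statement as well-known and deferring to \cite[Theorem~2.1]{Vay}, and you sketch the standard highest-weight proof of that theorem while citing the same reference for the finer points. One small remark: your phrase ``boundedness of the grading then promotes this to simplicity as an ungraded module'' is in fact a general fact (for any nonzero ungraded submodule $N$ of a finite-dimensional graded module $L$, the associated graded $\mathrm{gr}\,N$ with respect to the degree filtration is a nonzero graded submodule of the same dimension as $N$, so graded-simplicity forces $\dim N=\dim L$), and your injectivity step can be sharpened to $L(\lambda)^{I}=\lambda$ rather than merely $\mathrm{soc}\bigl(L(\lambda)^{I}\bigr)=\lambda$, since any homogeneous $I$-invariant of negative degree would generate a proper graded submodule; but these are refinements, not corrections.
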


The set $\Lambda$ is easy to compute since $D(\ku C_p) \simeq \ku C_p \otimes \polring_p$
and $\ku C_p$ is local. Given $k \in \fp$, let $\lambda_k = \ku w_k$ be the one-dimensional vector space with action
\begin{align*}
g\cdot w_k = w_k, && \zeta\cdot w_k = k w_k.
\end{align*}

\begin{lemma}The map $k \mapsto \lambda_k$ provides a bijection $\fp \simeq \Lambda$.
\qed.
\end{lemma}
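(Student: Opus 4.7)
The plan is to exploit the algebra isomorphism $D(\ku C_p) \simeq \ku C_p \otimes \polring_p$ recalled just before the statement, and then analyze the simple modules of each tensor factor separately.

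First I would observe that $\ku C_p$ is a local algebra: since $C_p$ is a $p$-group and $\car \ku = p$, the element $g - 1$ is nilpotent and generates a nilpotent ideal with one-dimensional quotient, so the Jacobson radical is $(g-1)\ku C_p$ and the unique simple $\ku C_p$-module is the trivial one (on which $g$ acts as $1$).

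Next I would analyze $\polring_p = \ku[\zeta]/(\zeta^p - \zeta)$. In characteristic $p$ we have the factorization
\begin{align*}
\zeta^p - \zeta = \prod_{k \in \fp}(\zeta - k)
\end{align*}
into distinct linear factors, so the Chinese remainder theorem gives $\polring_p \simeq \prod_{k \in \fp} \ku[\zeta]/(\zeta - k) \simeq \ku^{\fp}$. Hence $\polring_p$ is semisimple with exactly $p$ isomorphism classes of simple modules, all one-dimensional, indexed by $k \in \fp$ and characterized by $\zeta$ acting as the scalar $k$.

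Finally, since $\ku$ is algebraically closed and both factors are finite-dimensional, every simple $\ku C_p \otimes \polring_p$-module is an (outer) tensor product of a simple of $\ku C_p$ with a simple of $\polring_p$. Combining the previous two steps, the simple $D(\ku C_p)$-modules are exactly the one-dimensional modules on which $g$ acts trivially and $\zeta$ acts by some $k \in \fp$. These are precisely the $\lambda_k$ of the statement, and the correspondence $k \mapsto \lambda_k$ is visibly a bijection $\fp \simeq \Lambda$. I do not foresee any real obstacle: the only ingredients are the locality of $\ku C_p$ in characteristic $p$, the Artin-Schreier style factorization of $\zeta^p - \zeta$, and the standard tensor-product description of irreducibles over an algebraically closed field.
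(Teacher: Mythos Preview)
Your argument is correct and is exactly the reasoning the paper has in mind: the sentence preceding the lemma already records that $D(\ku C_p)\simeq \ku C_p\otimes \polring_p$ with $\ku C_p$ local, and the paper then states the lemma without proof (the \qed\ is immediate). Your write-up simply spells out the details the authors left implicit.
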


We fix $k \in \fp$ and compute $L(\lambda_k)$. 
Since $\verma(\lambda_k)$ is free as a $\D^{< 0}$-module with basis $w_k$, the elements
\begin{align*}
w_k^{(n_1,n_{21},n_2)} &\coloneqq  x_1^{n_1} x_{21}^{n_{21}} x_2^{n_2} \cdot w_k, & n_1\in\I_{0, 1},n_{21}\in \I_{0, p-1},n_2 \in\I_{0, 2p-1},
\end{align*}
form a linear basis of $\verma(\lambda_k)$.
This makes $\verma(\lambda_k)$ a graded module by $\deg w_k^{(n_1,n_{21},n_2)} = \deg (x_1^{n_1} x_{21}^{n_{21}} x_2^{n_2})$, so $\verma(\lambda_k) =\oplus_{n\leq 0}\verma(\lambda_k)_{n} $
and $\verma(\lambda_k)_0 = \ku w_k$. Any proper submodule of $\verma(\lambda_k)$ is then contained in $\oplus_{n\leq -1}\verma(\lambda_k)_{n}$.  Since $L(\lambda_k)$ is the unique simple quotient of $\verma(\lambda_k)$, we divide the later by proper submodules until we get a simple one.
We start by the submodule $N_k$ of $\verma(\lambda_k)$ generated by $S_k \coloneqq \{w_k^{(1,0,0)}, w_k^{(0,1,0)} \}$.

\begin{lemma} The submodule $N_k$ is proper.
\end{lemma}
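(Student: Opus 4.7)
The plan is to show that $N_k$ is contained in the proper graded subspace $V_{\leq -1}\coloneqq \bigoplus_{n\leq -1}\verma(\lambda_k)_n$, which does not contain the degree-zero vector $w_k$; since $w_k\in\verma(\lambda_k)\setminus V_{\leq -1}$, this already forces $N_k\neq\verma(\lambda_k)$. The key observation enabling this strategy is that $N_k$ is a \emph{graded} submodule, as its generators $x_1 w_k$ and $x_{21}w_k$ are homogeneous of degrees $-1$ and $-2$ respectively.

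Using the triangular decomposition \eqref{eq:D-cursiva-triang-decomp}, every element of $\cD$ is a linear combination of products $abc$ with $a\in\D^{< 0}$, $b\in D(\ku C_p)$, $c\in\D^{> 0}$. Elements of $\D^{< 0}$ and $D(\ku C_p)$ have non-positive $\Z$-degree and therefore preserve $V_{\leq -1}$, so the problem reduces to verifying that $\D^{> 0}\cdot v\subseteq V_{\leq -1}$ for each $v\in S_k$. The first step is the base case on the generators $u_1, u_2$ of $\D^{> 0}$: using the relations \eqref{eq:relations-D4}, \eqref{eq:relations-D5} and the consequence $u_2 x_{21}=x_{21}u_2-2\,x_{21}u_1+x_1(1+g)$ of Lemma \ref{lemma:commutation-relations-Db}, together with $u_1 w_k=u_2 w_k=0$ and $g w_k=w_k$ in $\verma(\lambda_k)$, one obtains
\begin{equation*}
u_1\cdot x_1 w_k=u_2\cdot x_1 w_k=u_1\cdot x_{21} w_k=0,\qquad u_2\cdot x_{21}w_k=2\,x_1 w_k.
\end{equation*}

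The second step is a straightforward induction on $\Z$-degree: every element of the augmentation ideal $(\D^{> 0})^+$ can be written as $c'u_1+c''u_2$ with $c',c''\in \D^{> 0}$ of strictly smaller $\Z$-degree, which upgrades the base case to $(\D^{> 0})^+\cdot x_1 w_k=0$ and $(\D^{> 0})^+\cdot x_{21} w_k\subseteq \ku\cdot x_1 w_k$. Together with the scalar action of the constants in $\D^{> 0}$, this yields $\D^{> 0}\cdot S_k\subseteq \ku\,x_1 w_k+\ku\,x_{21}w_k\subseteq V_{\leq -1}$, closing the argument. There is no deep obstacle; the only mildly delicate point is the non-vanishing of $u_2\cdot x_{21}w_k$ (which is what makes the result non-trivial: were it zero, the induction would be vacuous) and requires the careful use of the relation $u_2 x_{21}=x_{21}u_2-2\,x_{21}u_1+x_1(1+g)$ combined with $(1+g)w_k=2 w_k$.
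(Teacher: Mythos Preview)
Your proof is correct and follows essentially the same route as the paper: compute the action of $u_1,u_2$ on the two generators of $S_k$ to see that $\D^{>0}$ stabilizes the span of $S_k$, then use the triangular decomposition to conclude $N_k=\D^{\leq 0}\cdot S_k\subseteq\bigoplus_{n\leq -1}\verma(\lambda_k)_n$. The paper compresses your induction into the single line ``So $\D^{>0}\cdot S_k\subseteq S_k$'', but the content is the same. One small remark: your parenthetical comment that the non-vanishing of $u_2\cdot x_{21}w_k$ ``is what makes the result non-trivial'' is off --- if it vanished, the lemma would hold just as well and the argument would only be easier; the non-vanishing is noteworthy rather for what it says about the structure of $N_k$, not for the validity of the properness claim.
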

\begin{proof}
The action of $u_1$ and $u_2$ gives
\begin{align*}
u_1 x_1\cdot w_k &= -x_1 u_1\cdot w_k = 0, \qquad u_1 x_{21}\cdot w_k = x_{21} u_1\cdot w_k = 0,\\
u_2 x_1\cdot w_k &= -x_1 u_2 \cdot w_k + (1-g)\cdot w_k + x_1 u_1\cdot w_k= 0,\\
u_2 x_{21}\cdot w_k &= x_{21} u_2 \cdot w_k - 2 x_{21} u_1\cdot w_k + x_1 (1+g)\cdot w_k= 2 x_1 \cdot w_k.
\end{align*}
So $\D^{> 0}\cdot S_k\subseteq S_k$. Then $N_k = \D^{\leq0}\cdot S_k \subseteq \oplus_{n\leq -1}\verma(\lambda_k)_{n}$
is proper. 
\end{proof}
Let $V_k = \verma(\lambda_k) / N_k$ and let $y_j$ be the class of $w_k^{(0,0,j)}$ in $V_k$.
\begin{lemma}
The family $(y_j)_{j\in\I_{0, 2p-1}}$ generates linearly $V_k$ and $g$, $ u_1$, $u_{21}$, $x_1$ and $x_{21}$ act trivially on $V_k$.
\end{lemma}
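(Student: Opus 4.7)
The plan hinges on the $\N_0^2$-grading of $\toba(V)=\ku\langle x_1,x_2\rangle$ given by $\deg x_1=(1,0)$, $\deg x_2=(0,1)$ (so $\deg x_{21}=(1,1)$), which is preserved by the super Jordan relations \eqref{eq:def-super-jordan}. Its key consequence is: any element of bidegree $(a,b)$ with $a\ge 1$, when reduced to PBW form $\sum \alpha_{n_1,n_{21},n_2}\, x_1^{n_1}x_{21}^{n_{21}}x_2^{n_2}$, has every monomial satisfying $n_1+n_{21}\geq 1$; in particular such an element acts on $w_k$ to produce something lying in $N_k$.

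For the linear spanning by $(y_j)$, recall from the previous lemma that $N_k=\D^{\leq 0}\cdot S_k$. By \eqref{eq:relations-sH} (and its consequence $g x_{21}=x_{21}g$) together with \eqref{eq:zeta-xi}, both $g$ and $\zeta$ act on each element of $S_k$ by a scalar, so $N_k$ is spanned by elements of the form $x_1^a x_{21}^b x_2^c\cdot(x_i\cdot w_k)$ with $i\in\{1,21\}$. Reducing $x_1^a x_{21}^b x_2^c \cdot x_i$ to PBW form yields only monomials with $n_1+n_{21}\geq 1$ by the bidegree observation, and conversely every such $w_k^{(n_1,n_{21},n_2)}$ can be written in this way by factoring out an $x_1$ or an $x_{21}$. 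Thus $N_k$ equals the linear span of $\{w_k^{(n_1,n_{21},n_2)}:n_1+n_{21}\geq 1\}$, and the complementary monomials $\{w_k^{(0,0,j)}\}_{j\in\I_{0,2p-1}}$ project onto a spanning family $(y_j)$ of $V_k$.

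For the trivial actions I would proceed in the order $x_1,x_{21},g,u_1,u_{21}$. Since $x_1x_2^j$ and $x_{21}x_2^j$ have bidegrees $(1,j)$ and $(1,j+1)$, the bidegree observation gives $x_1\cdot y_j=x_{21}\cdot y_j=0$ in $V_k$. For $g$, iterating \eqref{eq:relations-sH} yields $gx_2^j=(x_2-x_1)^j g$; expanding noncommutatively, $(x_2-x_1)^j-x_2^j$ is a sum of elements of bidegree $(\geq 1,\cdot)$, hence $g\cdot y_j=(x_2-x_1)^j\cdot w_k=y_j$ in $V_k$. For $u_1$ and $u_{21}$ I would induct on $j$. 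The base $u_1\cdot w_k=u_{21}\cdot w_k=0$ holds because both elements lie in $\D^{>0}$, which annihilates the weight space. The inductive step uses $u_1x_2=-x_2u_1+(1-g)$ from \eqref{eq:relations-D4} and $u_{21}x_2=x_2u_{21}+(g+1)u_1$ (read off from Lemma \ref{lemma:commutation-relations-Db} at $n=1$, descended from $\widetilde{\cD}$ to $\cD$):
\begin{align*}
u_1\cdot y_j &= -x_2(u_1\cdot y_{j-1})+(1-g)\cdot y_{j-1}, \\
u_{21}\cdot y_j &= x_2(u_{21}\cdot y_{j-1})+(g+1)(u_1\cdot y_{j-1}).
\end{align*}
Both right-hand sides vanish in $V_k$ by the inductive hypothesis combined, respectively, with the already-established triviality of $g$ and of $u_1$.

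I do not anticipate a serious obstacle; the whole argument rests on the bihomogeneity of the super Jordan relations and on carefully ordering the trivialities ($x_i$ first, then $g$, then $u_1$, then $u_{21}$) so that each one relies only on the vanishings proved before it.
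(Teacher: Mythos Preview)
Your argument has a genuine gap at its foundation: the $\N_0^2$-bigrading you invoke does \emph{not} descend to the super Jordan plane. Expanding the second defining relation,
\[
x_2x_{21}-x_{21}x_2-x_1x_{21}=x_2^2x_1-x_1x_2^2-x_1x_2x_1-x_1^2x_2,
\]
one sees terms of bidegree $(1,2)$ mixed with terms of bidegree $(2,1)$; equivalently, the braiding itself does not respect the bigrading, since $c(x_1\otimes x_2)=(-x_2+x_1)\otimes x_1$ has a component in bidegree $(2,0)$. So there is no $\N_0^2$-grading on $\toba(V)$ to appeal to.

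That said, the \emph{conclusion} you draw from the phantom bigrading is correct: an element of $T(V)$ containing at least one $x_1$ maps into the two-sided ideal $\langle x_1\rangle\subset\toba(V)$, and from the PBW basis one reads off immediately that $\langle x_1\rangle$ is the span of the monomials with $n_1+n_{21}\ge 1$ (the quotient $\toba(V)/\langle x_1\rangle$ has basis $\{x_2^j\}$). To make your proof work you should replace the bigrading by this ideal statement, and for the converse inclusion you should check that the left ideal $\toba(V)x_1+\toba(V)x_{21}$ is already two-sided (use $x_1x_2=x_{21}-x_2x_1$ and $x_{21}x_2=x_2x_{21}-x_1x_{21}$), rather than the vague ``factoring out'' claim. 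The paper bypasses all of this and simply shows $w_k^{(1,0,n_2)},w_k^{(0,1,n_2)}\in N_k$ by induction on $n_2$ using those same two commutation identities together with the fact that $N_k$ is a $\cD$-submodule; this is effectively the filtration version of what you wanted. Your treatment of $g$, $u_1$, $u_{21}$ is correct and essentially matches the paper's, which dispatches them by citing the commutation relations in Lemma~\ref{lemma:commutation-relations-Db}.
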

\begin{proof}
We claim that the class of $w_k^{(n_1,n_{21},n_2)} = 0$ in $V_k$ if $(n_1,n_{21})\neq (0,0)$.
It suffices to show that $w_k^{(0,1,n_2)} = 0$ and $w_k^{(1,0,n_2)} = 0$  in $V_k$ for every $n_2$. 
This follows by induction on $n_2$ using the analogues for $\cD$ of the relations in Lemma \ref{lemma:commutation-relations-Db}. Since $x_{21}$ and $x_1$ commute, they both act trivially on $V_k$:
\begin{align*}
x_{21}\cdot w_k^{(n_1,n_{21},n_2)} = x_1^{n_1} x_{21}^{n_{21}}\cdot w_k^{(0,1,n_2)} = 0,\\
x_1\cdot  w_k^{(n_1,n_{21},n_2)} = x_1^{n_1} x_{21}^{n_{21}} \cdot w_k^{(1,0,n_2)} = 0.
\end{align*}
Then $(y_j)_{j\in\I_{0, 2p-1}}$ generates linearly $V_k$. 
Also $g$, $u_1$ and $u_{21}$ act trivially on these generators by Lemma \ref{lemma:commutation-relations-Db}.
\end{proof}
Set $y_{-1} = y_{2p} = 0$. The action of $\cD$ on $V_k$ can be computed inductively:
\begin{align}\label{eq:action-Lk}
\begin{aligned}
&\begin{aligned}
\zeta\cdot y_j &= (k-j)y_j, & g\cdot y_j&= y_j, &
x_2\cdot y_j &= y_{j+1}, & x_1\cdot y_j &= 0,
\end{aligned}
\\
&\begin{aligned}
u_1 \cdot y_j &= 0, &u_2 \cdot y_j &= \begin{cases}
\frac{j}{2} y_{j-1} & \text{if } j \text{ is even,}\\
(\frac{j-1}{2}-k) & \text{if } j \text{ is odd,} 
\end{cases}&  j&\in\I_{0, 2p-1}.
\end{aligned}
\end{aligned}
\end{align}

Now $\widetilde{V}_k := \cD y_{2k+1}$ is a proper submodule of $V_k$ because $\D^{> 0}\cdot y_{2k+1} = 0$.

\begin{prop}\label{prop:irrep-cD}
The module $L_k = V_k/ \widetilde{V}_k$ is simple of dimension $2k+1$.
\end{prop}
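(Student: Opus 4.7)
The plan is to establish the proposition in two steps: first identify $\widetilde{V}_k$ explicitly to deduce the dimension of $L_k$, and then verify simplicity of the quotient by a direct lowering/raising argument on the basis given by the images of the $y_j$'s.

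For the first step, I would exploit the preceding lemma: $x_1, x_{21}, u_1, u_{21}$ annihilate $V_k$, while $g$ acts as the identity and $\zeta$ acts diagonally. Hence the submodule generated by $y_{2k+1}$ is controlled entirely by the operators $x_2$ (which raises the index) and $u_2$ (which lowers it). The decisive vanishing is
\[
u_2 \cdot y_{2k+1} \;=\; \Big(\tfrac{(2k+1)-1}{2} - k\Big)\, y_{2k} \;=\; 0
\]
in $\fp$, so $u_2$ cannot carry $y_{2k+1}$ below index $2k+1$. Iterating $x_2$ produces $y_{2k+2}, \ldots, y_{2p-1}$ (with $x_2 \cdot y_{2p-1} = 0$ because $y_{2p} = 0$), and further applications of $u_2$ to these stay inside $\ku\{y_j : 2k+1 \leq j \leq 2p-1\}$. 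Therefore $\widetilde{V}_k$ equals this subspace, of dimension $2p-2k-1$, giving $\dim L_k = 2k+1$.

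For the second step, let $\bar{y}_j$ denote the image of $y_j$ in $L_k$, so that $\{\bar{y}_0, \ldots, \bar{y}_{2k}\}$ is a basis. By \eqref{eq:action-Lk}, $u_2 \cdot \bar{y}_j = \alpha_j\, \bar{y}_{j-1}$ for $j \in \I_{1, 2k}$, where $\alpha_j = j/2$ if $j$ is even and $\alpha_j = (j-1)/2 - k$ if $j$ is odd. Both families of scalars lie in $\fp^\times$: for even $j \in \I_{2, 2k}$ we have $j/2 \in \I_{1, k}$, and for odd $j \in \I_{1, 2k-1}$ the value $(j-1)/2 - k$ lies in $\{-k, -k+1, \ldots, -1\}$; all of these are nonzero modulo $p$ since $k < p$. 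Given a nonzero submodule $W \subseteq L_k$ and a nonzero $v = \sum_{j=0}^{j_0} c_j \bar{y}_j \in W$ with $c_{j_0} \neq 0$, the element $u_2 \cdot v$ has coefficient $c_{j_0}\alpha_{j_0} \neq 0$ at index $j_0-1$; iterating $u_2$ exactly $j_0$ times produces a nonzero scalar multiple of $\bar{y}_0 \in W$. Applying $x_2$ successively then recovers $\bar{y}_1, \ldots, \bar{y}_{2k}$ inside $W$, forcing $W = L_k$.

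No step of the plan looks genuinely hard; the only point that needs care, and the main potential obstacle, is the nonvanishing of the scalars $\alpha_j$ modulo $p$, which reduces to the bound $0 < j \leq 2k < 2p$. Once this proposition is established, combining it with the parametrization of the simples by $k \in \fp$ recovers the complete list of simple $\cD$-modules of dimensions $1, 3, 5, \ldots, 2p-1$, reproving Theorem \ref{th:irrep-restricted-super-jordan}.
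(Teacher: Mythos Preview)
Your proof is correct and follows essentially the same route as the paper's. Both arguments identify the images $\bar y_j$ (the paper writes $z_j$) for $j\in\I_{0,2k}$ as a basis of $L_k$ and prove simplicity by iterating $u_2$ down to a nonzero multiple of $\bar y_0$; you simply spell out more of what the paper leaves implicit, namely the explicit description of $\widetilde V_k$, the nonvanishing of the scalars $\alpha_j$ modulo $p$, and the final recovery of all basis vectors via $x_2$.
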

It follows that $L_k = L(\lambda_k)$, the head of the Verma module $M(\lambda_k)$.

\begin{proof} Let $z_j$ be the class of $y_j$ in $L_k$; the action of $\cD$ on the $z_j$'s is still given by \eqref{eq:action-Lk}.
Then $(z_j)_{j\in\I_{0, 2k}}$ is a basis of $L_k$. 
To see that $L_k$ is simple, we show that every $0 \neq z \in L_k$ generates $L_k$. Let $z = \sum_{j=0}^{m} c_j\,z_j$ 
with $m\leq 2k$ and $c_m\neq 0$. Then $ u_2^m\cdot z\in \kut z_0$, and $\cD\cdot z = L_k$.
\end{proof}

\subsection{$\cO(\mathfrak{G})$ as an extension}
Let $\Gb = (\Gb_a \times \Gb_a) \rtimes \Gb_m$ be the semidirect product where $\Gb_m$ acts on $\Gb_a \times \Gb_a$
by 
$\lambda_ \cdot(t_1, t_2) =(\lambda^2 t_1, t_2)$, $\lambda \in \kut$, $t_1, t_2\in\ku$.
Then $\cO(\Gb)$ is isomorphic to $A := \ku[\mathrm{X_1},\mathrm{X_2}, \mathrm{T}^{\pm 1}]$ with
comultiplication given by $\mathrm{T} \in G(A)$, $\mathrm{X_1} \in \Pc_{\mathrm{T}^2,1}(A)$, $\mathrm{X_2} \in \Pc(A)$.

\begin{prop}\label{prop:cOG-exact}
There is a short exact sequence of Hopf superalgebras
\begin{align}\label{eq:cOG-exact}
\cO(\Gb) \xhookrightarrow[]{\iota} \cO(\mathfrak{G})  \xrightarrowdbl[]{\pi} \nucleo,
\end{align}
with morphisms $\iota\colon \cO(\Gb)\hookrightarrow\cO(\mathfrak{G})$, $\pi\colon\cO(\mathfrak{G})\longrightarrow \nucleo$
given by
\begin{align*}
\iota(\mathrm{X_1}) &= X_1^p, & \iota(\mathrm{X_2}) &= X_2^p, & \iota(\mathrm{T}) &= T^p,\\
\pi(X_1) &= X_1, & \pi(X_2) &= X_2, & \pi(T) &= T, & \pi(Y_1)&= Y_1, & \pi(Y_2)&= Y_2.
\end{align*}
\end{prop}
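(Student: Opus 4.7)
The plan is to check the four conditions \ref{suc-exacta-1}--\ref{suc-exacta-4} defining an exact sequence of Hopf (super)algebras, after first verifying that the two maps $\iota$ and $\pi$ are well defined morphisms of Hopf superalgebras.

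For $\iota$, the nontrivial point is that the comultiplication \eqref{eq:comultiplication-of-super-G} is compatible with the rules $\mathrm{T}\in G$, $\mathrm{X_1}\in\Pc_{\mathrm{T}^2,1}$, $\mathrm{X_2}\in\Pc$ on $\cO(\Gb)$. Since $\cO(\mathfrak{G})$ is super commutative and the three summands $X_1\ot 1$, $T^2\ot X_1$ and $Y_1T\ot Y_1$ appearing in $\Delta(X_1)$ are all even (the last because $|Y_1|+|Y_1|=0$), they commute pairwise in $\cO(\mathfrak{G})\ot\cO(\mathfrak{G})$. Applying Freshman's dream in characteristic $p$ one obtains
\[
\Delta(X_1)^p=X_1^p\ot 1+T^{2p}\ot X_1^p+(Y_1T)^p\ot Y_1^p,
\]
and the last summand vanishes because $Y_1^2=0$ in $\Lambda(Y_1,Y_2)$. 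Analogously $\Delta(X_2)^p=X_2^p\ot 1+1\ot X_2^p$ (again the term with $Y_2^p$ dies) and $\Delta(T)^p=T^p\ot T^p$. Comparing with $\iota(\Delta(\mathrm{X_i}))$ and $\iota(\Delta(\mathrm{T}))$ shows that $\iota$ extends to a Hopf superalgebra map, automatically injective since $X_1^p,X_2^p,T^{\pm p}$ are algebraically independent in $\cO(\mathfrak{G})$. For $\pi$, all defining relations of $\nucleo$ are sent to $0$ and the coalgebra maps agree on generators, so $\pi$ is a well defined surjective morphism of Hopf superalgebras.

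This settles \ref{suc-exacta-1} and \ref{suc-exacta-2}. For \ref{suc-exacta-3}, the inclusion $\cO(\mathfrak{G})\iota(\cO(\Gb))^+\subseteq\ker\pi$ is immediate from $\pi\circ\iota=\varepsilon$. For the reverse inclusion I would use the $\iota(\cO(\Gb))$-basis
\[
\{X_1^{a_1}X_2^{a_2}T^{a_3}Y_1^{b_1}Y_2^{b_2}:\;0\le a_i\le p-1,\;0\le b_j\le 1\}
\]
of $\cO(\mathfrak{G})$, which shows simultaneously that $\cO(\mathfrak{G})$ is a free (hence faithfully flat) $\iota(\cO(\Gb))$-module with quotient $\cO(\mathfrak{G})/\cO(\mathfrak{G})\iota(\cO(\Gb))^+\simeq\nucleo$ and that this quotient coincides with $\pi$.

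Finally, \ref{suc-exacta-4} follows from Remark \ref{remark-exact-sequence-hopf}: $\iota$ is faithfully flat by the previous step, and $\iota(\cO(\Gb))$ is central in $\cO(\mathfrak{G})$ because it is generated by the even elements $X_1^p,X_2^p,T^{\pm p}$ in the super commutative algebra $\cO(\mathfrak{G})$, so it is trivially stable under the left adjoint action. The main (and only) conceptual obstacle is the Freshman's dream computation of $\Delta(X_i)^p$; everything else is a straightforward bookkeeping with the explicit basis above.
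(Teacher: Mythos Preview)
Your proof is correct and follows essentially the same route as the paper: verify that $\iota$ and $\pi$ are Hopf superalgebra maps, use the explicit $\iota(\cO(\Gb))$-basis of $\cO(\mathfrak{G})$ to get injectivity, surjectivity, the kernel condition and freeness, and then invoke Remark~\ref{remark-exact-sequence-hopf} for \ref{suc-exacta-4}. The only cosmetic difference is that the paper establishes $\Delta(X_i^p)$ by writing down closed formulas for $\Delta(X_i^n)$ for all $n$ and specialising to $n=p$, whereas your Freshman's-dream argument (using that the three summands in $\Delta(X_i)$ are even, hence pairwise commute in the super commutative tensor square) reaches the same conclusion more directly.
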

\begin{proof}
Clearly $\pi$ is of Hopf superalgebras by the definition of comultiplication in $\cO(\mathfrak{G})$ and $\nucleo$. In
 $\cO(\mathfrak{G})$ we have the following comultiplication formulas 
 \begin{align*}
 \Delta(X_1^n) &= \sum_{\ell=0}^{1}\sum_{k=0}^{n-\ell}\binom{n-\ell}{k} n^\ell
 Y_1^\ell X_1^k T^{2(n-k)-\ell}\ot Y_1^\ell 
 X_1^{n-k-\ell},\\
 \Delta(X_2^n) &=\sum_{\ell=0}^{1}\sum_{k=0}^{n-\ell}\binom{n-\ell}{k} n^\ell
 Y_2^\ell X_2^k \ot Y_2^\ell X_2^{n-k-\ell},
 \end{align*}
$n\in\N$. Thus $X_1^p \in \Pc_{\mathrm{T}^{2p},1}(\cO(\mathfrak{G}))$, $X_2^p \in \Pc(\cO(\mathfrak{G}))$
and $\iota$ is of Hopf superalgebras. Then $\ker \pi = \cO(\mathfrak{G}) \iota(\cO(\Gb) )^+$, $\iota$ is injective and $\pi$ is surjective, using the PBW bases. Since  $\cO(\mathfrak{G})$ is a free $\cO(\Gb)$-module, Remark \ref{remark-exact-sequence-hopf} applies.
\end{proof}
\subsection{A commutative square}
We summarize the relationship between the Hopf superalgebras studied so far 
in the commutative diagram
\begin{align}\tag{\ref{eq:diagram-exact sequences}}
\begin{aligned}
\xymatrix{ & \cO(\Gb)  \ar@{^{(}->}[r] \ar@{^{(}->}[d] & \cO(\Bb) \ar@{^{(}->}[d] \ar@{->>}[r] & \cO(\Gb_a^3) \ar@{^{(}->}[d]
\\
 & \cO(\mathfrak{G})  \ar@{^{(}->}[r] \ar@{->>}[d] & \widetilde{\cD} \ar@{->>}[r] \ar@{->>}[d] & U(\mathfrak{osp}(1|2)) \ar@{->>}[d]
\\
& \nucleo \ar@{^{(}->}[r] & \cD \ar@{->>}[r] & \ugo(\mathfrak{osp}(1|2))
}
\end{aligned}
\end{align}
\begin{prop}\label{thm:diagram-exact sequences}
All columns and rows in \eqref{eq:diagram-exact sequences} are exact sequences. 
\end{prop}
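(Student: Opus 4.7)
Rows 2 and 3 of \eqref{eq:diagram-exact sequences} are exact by Theorems \ref{prop:Db-exact} and \ref{prop:cD-as-super-abelian-extension}, and columns 1 and 2 by Propositions \ref{prop:cOG-exact} and \ref{prop:cD-as-Hopf-superalgebra}\ref{prop:propiedades-cD-2}. Hence the plan is to define the four remaining arrows (those of row 1 and column 3), establish exactness of these two sequences, and then verify commutativity of the four squares.

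For row 1, I would define $\iota\colon\cO(\Gb)\hookrightarrow\cO(\Bb)$ by $\mathrm{T}\mapsto T$, $\mathrm{X_1}\mapsto X_1$, $\mathrm{X_2}\mapsto X_4$; comparing the coproducts of $\cO(\Bb)$ recorded in the proof of Theorem \ref{thm:Z}\ref{Z-4} (where $X_1\in\Pc_{T^2,1}$ and $X_4\in\Pc$) with those of $\cO(\Gb)$ shows $\iota$ is a Hopf algebra embedding. The quotient $\cO(\Bb)/\cO(\Bb)\iota(\cO(\Gb))^+$ is identified with $\ku[X_2,X_3,X_5]$ in which all three generators become primitive (since $T\equiv 1$ and $X_4\equiv 0$), hence isomorphic to $\cO(\Gb_a^3)$. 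Faithful flatness over $\iota(\cO(\Gb))$ is immediate from the polynomial decomposition, so Remark \ref{remark-exact-sequence-hopf} delivers exactness. For column 3, I would invoke the standard fact that $e^p, f^p, h^p-h$ are central and primitive in $U(\mathfrak{osp}(1|2))$; they generate a central polynomial Hopf subalgebra isomorphic to $\cO(\Gb_a^3)$, and the quotient is $\ugo(\mathfrak{osp}(1|2))$ by definition, exactness following from the PBW basis of $\ugo$.

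Commutativity of the four squares is then checked on generators. The top-left square commutes because $\iota_1(\mathrm{X_i})=X_i^p$ in $\cO(\mathfrak{G})$ (Proposition \ref{prop:cOG-exact}) and the composition into $\widetilde{\cD}$ sends these to $\xb_{21}^p,\ub_{21}^p,\gentilde^p$, matching the composition through $\cO(\Bb)$ defined above. For the top-right square, tracing the generators of $\cO(\Bb)$ through $\widetilde{\cD}\twoheadrightarrow U(\mathfrak{osp}(1|2))$ yields $X_1,X_4,T\mapsto 0,0,1$ and $X_2\mapsto -f^p$, $X_3\mapsto h^p-h$, $X_5\mapsto e^p$, which factors through the projection to $\cO(\Gb_a^3)$ and lands in the image of the column-3 inclusion. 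The bottom squares are handled analogously using $\widetilde{\cD}\twoheadrightarrow\cD$ and $U(\mathfrak{osp}(1|2))\twoheadrightarrow\ugo(\mathfrak{osp}(1|2))$. I expect the main obstacle to be sign and $p$-th-power bookkeeping: one uses $\psi_+^2=e$, $\psi_-^2=-f$ (from $[\psi_+,\psi_+]=2e$, $[\psi_-,\psi_-]=-2f$) to get $(\psi_+)^{2p}=e^p$, $(\psi_-)^{2p}=-f^p$, and $\zeta\mapsto -h$ to give $\zeta^{(p)}\mapsto -(h^p-h)$, so that $X_3=-\zeta^{(p)}$ correctly maps to $h^p-h$; with these identifications in hand the verification is routine.
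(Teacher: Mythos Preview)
Your proposal is correct and follows essentially the same route as the paper: the same four results are cited for rows 2, 3 and columns 1, 2, the inclusion $\cO(\Gb)\hookrightarrow\cO(\Bb)$ is defined identically by $\mathrm{T}\mapsto T$, $\mathrm{X_1}\mapsto X_1$, $\mathrm{X_2}\mapsto X_4$, the projection $\cO(\Bb)\to\cO(\Gb_a^3)$ sends $X_2,X_3,X_5$ to $-f^p, h^p-h, e^p$, and the rightmost column is handled via the central primitive elements $e^p,f^p,h^p-h$. Your explicit verification of the commutativity of the four squares and the careful sign tracking (e.g.\ $X_3=-\zeta^{(p)}\mapsto h^p-h$, $\xb_2^{2p}\mapsto -f^p$) are more detailed than what the paper records, but the underlying argument is the same.
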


\begin{proof}
Theorems \ref{prop:Db-exact} and \ref{prop:cD-as-super-abelian-extension}, and
and Propositions \ref{prop:cD-as-Hopf-superalgebra} and \ref{prop:cOG-exact} covers everything except the topmost row and the rightmost column.
 
For the rightmost column we need to prove that the even Hopf subalgebra  $Z' = \left\langle e^p, f^p, h^p-h \right\rangle$ of $U(\mathfrak{osp}(1|2))$ is $\cO(\Gb_a^3)\simeq \ku[X_1,X_2,X_3]$. Taking the
basis of $U(\mathfrak{osp}(1|2))$ consisting of monomials
\begin{align*}
f^n (h^p-h)^k h^\ell e^m \psi_+^i \psi_-^j
\end{align*} 
with $(n,m,k,\ell,i,j)\in \N_0^3\times\I_{0, p-1}\times\I_{0, 1}^2$, we 
see that the assignment
\begin{align*}
X_1&\mapsto f^p, & X_2&\mapsto h^p - h, & X_3&\mapsto e^p, 
\end{align*}
gives an algebra isomorphism $Z'\simeq \ku[X_1,X_2,X_3] \simeq \cO(\Gb_a^3)$. Comparing comultiplications, the previous isomorphism is of Hopf algebras. $\cO(\Gb_a^3)$ is stable 
by the adjoint action of  $U(\mathfrak{osp}(1|2))$ and a free module over $\cO(\Gb_a^3)$ using the previous basis, then Remark \ref{remark-exact-sequence-hopf} applies and the column is exact.

We next describe
explicitly the top row. $\phi\colon \cO(\Gb) \to \cO(\Bb)$ is given by
\begin{align*}
\mathrm{X_1}&\mapsto X_1, &\mathrm{X_2}&\mapsto X_4, &\mathrm{T}&\mapsto T.
\end{align*} 
Recall that $\cO(\Bb)\simeq \ku[T^{\pm},X_1,\dots,X_5]$ and $\cO(\Gb)\simeq \ku[\mathrm{X_1},\mathrm{X_2}, \mathrm{T}^{\pm 1}]$, cf. Theorem \ref{thm:Z} and Proposition \ref{prop:cOG-exact}. Take
$\psi \colon \cO(\Bb) \longrightarrow \cO(\Gb_a^3)$ given by 
\begin{align*}
T&\mapsto 1,& X_1 &\mapsto 0,& X_2 &\mapsto -f^p,&
X_3&\mapsto h^p-h,& X_4 &\mapsto 0,& X_5 &\mapsto e^p.
\end{align*}
Now $\phi$ is a injective because it maps a basis into a linearly independent set 
while $\psi$ is surjective since the PBW-basis of $\cO(\Gb_a^3)$ is in its image. 
Clearly $\ker \psi \supseteq \cO(\Bb) \phi(\cO(\Gb))^+$, the other inclusion follows using the basis of $\cO(\Bb)$ 
\begin{align*}
 \tau_m\,X_1^{n}\,X_2^{r}\,X_3^k X_4^{i} X_5^{j}, &&  i,j,k,n,r\in\N_0, m\in \Z,
\end{align*}
where $\tau_m$ is given by the formula
$\tau_m = \begin{cases}
(T-1)^m & \text{if } m \geq 0,\\
(T^{-1}-1)^{-m} & \text{if } m < 0
\end{cases}$.
The adjoint action of $\cO(\Bb)$ is trivial, so $\phi(\cO(\Gb))$ is invariant. Considering the PBW-bases 
of $\cO(\Bb)$ and $\cO(\Gb)$, $\phi$ is faithfully flat. 
Thus the row is exact.
\end{proof}

\end{document}